\documentclass[a4paper,11pt]{article}

\pdfoutput=1

\usepackage{a4wide}
\usepackage{amsmath, amsthm, amssymb}
\usepackage{fontenc}
\usepackage[latin1]{inputenc}
\usepackage[pdftex]{graphicx}
\graphicspath{{img/PET/}{img/SpectralCT/}}
\DeclareGraphicsExtensions{.pdf,.jpeg,.png}
\usepackage{pstricks}
\usepackage{algorithmic, algorithm}

\usepackage{tikz}
\usepackage{multirow}

\usepackage{url}

\usepackage{caption}
\usepackage{subcaption}

\usepackage{enumerate}
%change enumeration symbol from 1. to i)

\newtheorem{theorem}{Theorem}[section]

\newtheorem{proposition}[theorem]{Proposition}
\newtheorem{lemma}[theorem]{Lemma}
\newtheorem{remark}[theorem]{Remark}
\newtheorem{example}[theorem]{Example}
\newtheorem{corollary}[theorem]{Corollary}

\newcommand{\R}{\mathbb{R}}

\def\tT{{\mbox{\tiny{T}}}}
\def\argmin{\mathop{\rm argmin}}
\def\argmax{\mathop{\rm argmax}}
\def\prox{{\rm prox}}
\def\dom{{\rm dom}}
\def\sup{\mathop{\rm sup}}

\def\lb{\langle}
\def\rb{\rangle}

\parindent0mm

\title{First Order Algorithms in Variational Image Processing}
\author{
M. Burger\thanks{University of M\"unster, Department of Mathematics and Computer Science, 48149 M\"unster, Germany}, A. Sawatzky\footnotemark[1], \
and
G. Steidl\thanks{University of Mannheim, Dept. of Mathematics and Computer Science, A5, 68131 Mannheim, Germany}
}
%\date{today}
\begin{document}

\maketitle

%-------------------------------------------------------------------------------------------------
\section{Introduction} \label{sec:intro}
%--------------------------------------------------------------------------------------------------
Variational methods in imaging are nowadays developing towards a quite universal and flexible tool, 
allowing for highly successful approaches on tasks like denoising, deblurring, inpainting, segmentation,
super-resolution, disparity, and optical flow estimation. 
The overall structure of such approaches is of the form
$$ {\cal D}(Ku) + \alpha  {\cal R} (u) \rightarrow \min_u, $$
where the functional ${\cal D}$ is a data fidelity term also depending on some input data $f$ and measuring the deviation of $Ku$ 
from such  and ${\cal R}$ is a regularization functional. Moreover $K$ is a (often linear) forward operator modeling the dependence 
of data on an underlying image, and $ \alpha$ is a positive regularization parameter.
While ${\cal D}$ is often smooth and (strictly) convex, 
the current practice almost exclusively uses nonsmooth regularization functionals. 
The majority of successful techniques is using nonsmooth and convex functionals like the total variation and generalizations 
thereof, cf. \cite{bredies2010total,burger2013guide,CCCNP2010}, 
or $\ell_1$-norms of coefficients arising from scalar products with some frame system, 
cf. \cite{fornasier2010theoretical} and references therein. 

The efficient solution of such variational problems in imaging demands for appropriate algorithms. Taking into account 
the specific structure as a sum of two very different terms to be minimized, splitting algorithms 
are a quite canonical choice. Consequently this field has revived the interest in techniques 
like operator splittings or augmented Lagrangians.
In this chapter we shall provide an overview of methods currently developed and recent results as well as some computational studies 
providing a comparison of different methods and also illustrating their success in applications.  

We start with a very general viewpoint in the first sections, discussing basic notations, properties of proximal maps, 
firmly non-expansive and averaging operators, which form the basis of further convergence arguments. 
Then we proceed to a discussion of several state-of-the art algorithms and their (theoretical) convergence properties.
 After a section discussing issues related to the use of analogous iterative schemes for ill-posed problems, 
we present some practical convergence studies in numerical examples related to PET and spectral CT reconstruction.

%-------------------------------------------------------------------------------------------------
\section{Notation} \label{sec:notation}
%--------------------------------------------------------------------------------------------------
In the following we summarize the notations and definitions that will be used throughout the presented chapter:
\begin{itemize}
\item $x_+ := \max\{x,0\}$, $x \in \R^d$, whereby the maximum operation has to be interpreted componentwise.
\item
$\iota_C$ is the indicator function of a set $C \subseteq \R^d$ given by
$$
\iota_C(x) := 
\left\{
\begin{array}{rl}
0&{\rm if} \; x \in C,\\
+\infty&{\rm otherwise}.
\end{array}
\right.
$$
\item
$\Gamma_0(\mathbb R^d)$ is a set of proper, convex, and lower semi-continuous functions mapping from $\R^d$ into the extended
real numbers $\R \cup \{+\infty\}$.
\item 
${\rm dom} f := \{x \in \mathbb R^d: f(x) < + \infty\}$ denotes the {\it effective domain}\index{effective domain} of $f$.
\item
$\partial f(x_0) := \{p \in \mathbb R^d: f(x) - f(x_0) \ge \langle p, x-x_0 \rangle \; \forall x \in \mathbb R^d\}$ 
denotes the {\it subdifferential}\index{subdifferential} of $f\in \Gamma_0(\R^d)$  
at  $x_0 \in {\rm dom} f$ and is the set consisting of
the
{\it subgradients}\index{subgradients} of $f$ at $x_0$.
If $f \in  \Gamma_0(\R^d)$ is differentiable at $x_0$, then
$
\partial f(x_0) = \{ \nabla f(x_0) \}.
$
Conversely, if  $\partial f(x_0)$ contains only one element then $f$ is differentiable at $x_0$ 
and this element is just the gradient of $f$ at $x_0$.
By {\it Fermat's rule},
$\hat {x}$ is a global minimizer of $f \in \Gamma_0(\R^d)$ if and only if
\[
0 \in \partial f(\hat{x}).
\]
\item
$f^*(p) := \sup_{x \in \mathbb R^d} \{ \langle p,x\rangle - f(x) \}$ is the (Fenchel) {\it conjugate}\index{conjugate@(Fenchel) conjugate} of $f$. 
For proper $f$, we have $f^* = f$  if and only if $f(x) = \frac12 \|x\|_2^2$.
If $f \in \Gamma_0(\R^d)$ is {\it positively homogeneous}, i.e., $f(\alpha x) = \alpha f(x)$ for all $\alpha > 0$, 
then 
$$f^* (x^*) = \iota_{C_f} (x^*), \quad C_f := \{x^* \in \R^d: \langle x^*,x\rangle \le f(x) \; \forall x \in \R^d\}.$$
In particular, the  conjugate functions of  $\ell_p$-norms, $p \in [1,+\infty]$,  are given by
\begin{equation} \label{dual_norms}
\|\cdot\|_{p}^{*} (x^*) = \iota_{B_q(1)}(x^*)
\end{equation}
where $\frac{1}{p} + \frac{1}{q} = 1$ and as usual $p=1$ corresponds to $q=\infty$ and conversely,
and 
$B_q(\lambda) := \{ x \in \mathbb R^d: \|x\|_q \le \lambda\}$
denotes the ball of radius $\lambda >0$ with respect to the $\ell_q$-norm.
\end{itemize}

%-------------------------------------------------------------------------------------------------
\section{Proximal Operator} \label{sec:prox_operator}
%--------------------------------------------------------------------------------------------------
%
The algorithms proposed in this chapter to solve various problems
in digital image analysis and restoration have in common that they basically reduce to
the evaluation of a series of proximal problems.
Therefore we start with these kind of problems. For a comprehensive overview
on proximal algorithms we refer to \cite{PB2013}.

%-------------------------------------------------------------------------------------------------
\subsection{Definition and Basic Properties} \label{subsec:prox_operator_basics}
%--------------------------------------------------------------------------------------------------
For $f\in \Gamma_0(\mathbb R^d)$ and $\lambda >0$,
the {\it proximal operator} \index{proximation mapping} 
$\prox_{\lambda f}: \mathbb R^d \rightarrow \mathbb R^d$ of $\lambda f$ is defined by
\begin{equation} \label{def:prox}
\prox_{\lambda f} (x) := \argmin\limits_{y \in \mathbb R^d} 
\left\{ \frac{1}{2\lambda} \|x-y\|_2^2 +  f(y)\right\}.
\end{equation}
It compromises between minimizing $f$ and being near to $x$, where  $\lambda$ is the trade-off parameter between these terms.
The {\it Moreau envelope} \index{Moreau envelope} or {\it Moreau-Yoshida regularization} \index{Moreau-Yoshida regularization} 
$\; {}^\lambda \!f: \mathbb R^d \rightarrow \mathbb R$ 
is given by 
$$
{}^\lambda \! f (x) := \min\limits_{y \in \mathbb R^d} 
\left\{ \frac{1}{2\lambda} \|x-y\|_2^2 + f(y) \right\}. 
$$
A straightforward calculation shows that
${}^\lambda \! f = (f^* + \frac12 \|\cdot \|^2_2)^*$.
The following theorem ensures that the minimizer in \eqref{def:prox} exists, is unique and 
can be characterized by a variational inequality. 
The Moreau envelope can be considered as a smooth approximation of $f$.
For the proof we refer to \cite{Au03}.
%
%-------------------------------------------------------------------------------
\begin{theorem} \label{prox_1}
Let $f\in  \Gamma_0(\mathbb R^d)$. Then,
\begin{itemize}
\item[{\rm i)}]
For any $x \in \mathbb R^d$, there exists a unique minimizer
$\hat x = {\rm prox}_{\lambda f}(x)$
of \eqref{def:prox}.
\item[{\rm ii)}] The variational inequality \index{variational inequality}
\begin{equation} \label{basic_problem_1}
\frac{1}{\lambda} \langle x- \hat x, y- \hat x  \rangle + f(\hat x) - f(y) \le 0
\qquad \forall y \in \mathbb R^d.
\end{equation}
is necessary and sufficient for $\hat x$ to be the minimizer of \eqref{def:prox}.
\item[{\rm iii)}] $\hat x$ is a minimizer of $f$ if and only if it is a fixed point of ${\rm prox}_{\lambda f}$, i.e.,
$$
\hat x = {\rm prox}_{\lambda f} (\hat x).
$$
%\item[{\rm iv)}] ~$\prox_{\lambda f}$ is firmly nonexpansive.
\item[{\rm iv)}]  ~The Moreau envelope ${}^\lambda \! f$ is continuously differentiable with gradient
\begin{equation} \label{grad_env}
\nabla \big( {}^\lambda f \big) (x) = \frac{1}{\lambda} \left( x - {\rm prox}_{\lambda f} (x) \right).
\end{equation}
\item[{\rm v)}] The set of minimizers of $f$ and ${}^\lambda f$ are the same. 
\end{itemize}
Rewriting iv) as
$
 {\rm prox}_{\lambda f} (x) = x - \lambda \nabla \big( {}^\lambda f \big) (x)
$
we can interpret ${\rm prox}_{\lambda f}(x)$ as a gradient descent step with step size $\lambda$ for minimizing
${}^\lambda f$.
\end{theorem}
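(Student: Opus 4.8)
The plan is to establish the five items in the order i), ii), then a short auxiliary observation on the non-expansiveness of $\prox_{\lambda f}$, then iv), and finally iii) and v), since the later parts build on the earlier ones. For i), I would note that the objective $g(y):=\frac{1}{2\lambda}\|x-y\|_2^2+f(y)$ minimised in \eqref{def:prox} is proper (its effective domain equals $\dom f\neq\emptyset$), lower semi-continuous as a sum of two lower semi-continuous functions, and $\tfrac1\lambda$-strongly convex because $f$ is convex and the quadratic term is $\tfrac1\lambda$-strongly convex. Since every $f\in\Gamma_0(\R^d)$ is minorised by an affine function, $g$ is coercive, so the direct method of the calculus of variations yields a minimiser, and strong (hence strict) convexity makes it unique; this is exactly $\hat x=\prox_{\lambda f}(x)$. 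For ii), I would apply Fermat's rule to $g$: since the quadratic part of $g$ is finite and differentiable on all of $\R^d$, the subdifferential sum rule gives $\partial g(\hat x)=\tfrac1\lambda(\hat x-x)+\partial f(\hat x)$, so $0\in\partial g(\hat x)$ is equivalent to $\tfrac1\lambda(x-\hat x)\in\partial f(\hat x)$; writing out the definition of $\partial f(\hat x)$ turns this into \eqref{basic_problem_1}, and both implications hold without further regularity because $g$ is convex.

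Before iv) I would record the auxiliary fact that $\prox_{\lambda f}$ is firmly non-expansive: with $\hat x=\prox_{\lambda f}(x)$ and $\hat z=\prox_{\lambda f}(z)$, adding \eqref{basic_problem_1} written for $(x,\hat x)$ and tested at $y=\hat z$ to the same inequality written for $(z,\hat z)$ and tested at $y=\hat x$ gives $\|\hat x-\hat z\|_2^2\le\langle x-z,\hat x-\hat z\rangle$; in particular $\prox_{\lambda f}$ is $1$-Lipschitz, hence continuous. I expect iv) to be the crux. Write $v:=\tfrac1\lambda(x-\hat x)$ and $r(z):={}^\lambda f(z)-{}^\lambda f(x)-\langle v,z-x\rangle$. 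An upper bound $r(z)\le\tfrac{1}{2\lambda}\|z-x\|_2^2$ is immediate from inserting the (in general suboptimal) point $\hat x$ into the infimum defining ${}^\lambda f(z)$ while using that $\hat x$ attains the infimum for ${}^\lambda f(x)$; inserting instead $\hat z=\prox_{\lambda f}(z)$ into the infimum defining ${}^\lambda f(x)$ and rearranging gives $r(z)\ge\tfrac{1}{2\lambda}\|z-x\|_2^2+\tfrac1\lambda\langle \hat x-\hat z,z-x\rangle$. Dividing by $\|z-x\|_2$ and letting $z\to x$, the upper bound tends to $0$, and since $\hat z\to\hat x$ by the non-expansiveness just recorded, so does the lower bound; hence ${}^\lambda f$ is Fréchet differentiable at $x$ with $\nabla\big({}^\lambda f\big)(x)=v$, which is \eqref{grad_env}, and continuity of $x\mapsto\prox_{\lambda f}(x)$ upgrades this to ${}^\lambda f\in C^1$. (Alternatively one could invoke the identity ${}^\lambda f=(f^*+\tfrac12\|\cdot\|_2^2)^*$ together with smoothness of conjugates of strongly convex functions, but the direct estimate is self-contained.)

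Item iii) then follows from ii): putting $x=\hat x$ in \eqref{basic_problem_1} leaves $f(\hat x)\le f(y)$ for all $y$, so $\hat x$ being a fixed point of $\prox_{\lambda f}$ is equivalent to $\hat x$ minimising $f$ by Fermat's rule. Finally v) combines iii) and iv) with Fermat's rule once more: ${}^\lambda f$ is convex and, by iv), differentiable, so $\hat x$ minimises ${}^\lambda f$ iff $\nabla\big({}^\lambda f\big)(\hat x)=0$ iff $\hat x=\prox_{\lambda f}(\hat x)$ iff, by iii), $\hat x$ minimises $f$; thus $f$ and ${}^\lambda f$ have the same set of minimisers. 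The concluding remark of the theorem is just a rewriting of iv). The main obstacle is the squeeze estimate in iv), which is precisely why the non-expansiveness of $\prox_{\lambda f}$ is isolated beforehand.
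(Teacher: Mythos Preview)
Your proof is correct and complete. The paper itself does not prove this theorem; it simply writes ``For the proof we refer to \cite{Au03}.'' So there is no in-paper argument to compare against.

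A couple of remarks on how your write-up interacts with the surrounding paper. The firm non-expansiveness of $\prox_{\lambda f}$ that you isolate as an auxiliary step is exactly what the paper later states and proves as Lemma~\ref{fne_prox}, using essentially the same addition of two instances of \eqref{basic_problem_1}; so your proof is self-contained in a way the paper's deferred citation is not, and it even anticipates a result the paper needs later. One small point you leave implicit in v) is why ${}^\lambda f$ is convex; you might add a clause noting that this follows from the identity ${}^\lambda f=(f^*+\tfrac12\|\cdot\|_2^2)^*$ recorded just before the theorem, or directly from the fact that the infimal convolution of two convex functions is convex. Otherwise the squeeze argument for iv) is clean and the logical dependencies among the five items are handled correctly.
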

%

%-------------------------------------------------------------------------------

\begin{example} \label{ex:prox} {\rm
Consider the univariate function $f(y) := |y|$
and
$$
{\rm prox}_{\lambda f} (x) 
= 
\argmin\limits_{y \in \mathbb R}  \left\{ \frac{1}{2 \lambda} (x-y)^2 + |y| \right\}.
%= (|\lambda-x|+(\lambda-x) - |(-\lambda-x)+(-\lambda-x)|/2;
$$
Then, a straightforward computation yields that
${\rm prox}_{\lambda f}$ is the {\it soft-shrinkage} function \index{soft-shrinkage} $S_\lambda$ with threshold $\lambda$ (see Fig. \ref{fig:soft_shrinkage})
defined by
$$
S_\lambda(x):= 
(x- \lambda)_+ - (-x-\lambda)_+ 
=
\left\{
\begin{array}{cl}
x - \lambda& {\rm for} \; x > \lambda,\\
0         & {\rm for} \; x \in [-\lambda,\lambda],\\
x + \lambda& {\rm for} \; x < -\lambda.
\end{array}
\right.
$$
Setting $\hat x := S_\lambda(x) = \prox_{\lambda f}(x)$, we get
$$
{}^\lambda \! f (x)
= |\hat x | + \frac{1}{2\lambda} (x- \hat x)^2
=
\left\{
\begin{array}{cl}
x - \frac{\lambda}{2} & {\rm for} \; x > \lambda,\\[0.5ex]
\frac{1}{2\lambda} x^2         & {\rm for} \; x \in [-\lambda,\lambda],\\[0.5ex]
- x - \frac{\lambda}{2}& {\rm for} \; x < -\lambda.
\end{array}
\right.
$$
This function ${}^\lambda \! f$ is known as {\it Huber function} (see Fig. \ref{fig:soft_shrinkage}).
}
\end{example}

%------------------------------------------------------------------------
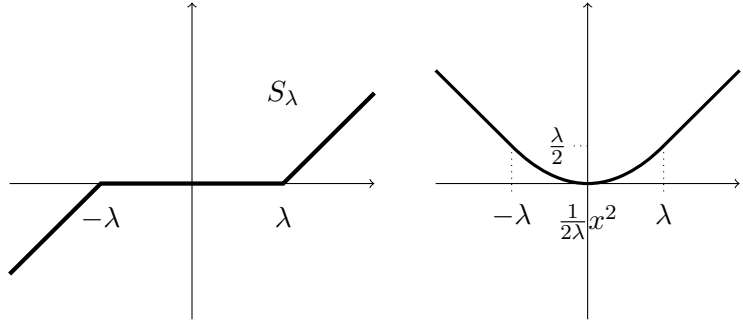
\begin{figure}[htbp]
\begin{center}
\begin{tikzpicture}[scale=0.6] 
% Hier kann man angeben, wie gro� man das Bild gerne h�tte

\draw[->] (-4,0) -- (4,0);  
\draw[->] (0,-3) -- (0,4); %Koordinatenkreuz

\draw[ultra thick] (-4,-2) -- (-2,0) 
-- (-2,0) node[label=below:$-\lambda$] {} -- (2,0) node[label=below:$\lambda$] {} 
-- (2,0) -- (4,2);
\node at (2,2) {$S_{\lambda}$};
%\node at (4.25,0.9) {$x-\lambda$};
%\node at (-5.25,-0.9) {$-x+\lambda$};
\end{tikzpicture}
\hspace{0.5cm}
\begin{tikzpicture}[scale=1]

\draw[->] (-2,0) -- (2,0); 
%\draw[->] (0,-0.2) -- (0,2);
\draw[->] (0,-1.8) -- (0,2.4);
\draw[very thick] (-2,1.5) -- (-1,0.5)
parabola bend (0,0) (1,0.5) -- (2,1.5);

\node[label=left: $\frac{\lambda}{2}$] at (0,0.5) {} ;
\node[label=below: $-\lambda$] at (-1,0) {};
\node[label=below: $\lambda$] at (1,0) {} ;
\draw[dotted] (0,0.5) -- (-0.25,0.5); 
\draw[dotted] (-1,0.5) -- (-1,-0.15); 
\draw[dotted] (1,0.5) -- (1,-0.15); 

\node[label=below: $\frac{1}{2\lambda}x^2$] at (0,0) {} ;
%\node[label=below: $-x-\frac{\lambda}{2}$] at (-2.5,1.5) {} ;
%\node[label=below: $x-\frac{\lambda}{2}$] at (2.5,1.5) {} ;
\end{tikzpicture}
\end{center}
\caption{Left: Soft-shrinkage function $\prox_{\lambda f} = S_\lambda$ for $f(y) = |y|$.
Right: Moreau envelope ${}^\lambda f$.
\label{fig:soft_shrinkage}}
\end{figure}
%------------------------------------------------------------------------
\begin{theorem}[Moreau decomposition] \label{prox_2}
For $f\in  \Gamma_0(\mathbb R^d)$ the following decomposition holds: % true:
\begin{align*}
\prox_f (x) + \prox_{f^*} (x) = x,\\
{}^1 \!f (x) + {}^1 \!f^* (x) = \frac12 \|x\|_2^2.
\end{align*}
\end{theorem}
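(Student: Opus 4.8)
The plan is to derive the first identity from the optimality characterization in Theorem \ref{prox_1} ii), and then obtain the second identity by an elementary computation of the two Moreau envelopes using the first. First I would fix $x \in \R^d$ and set $\hat x := \prox_f(x)$. By Theorem \ref{prox_1} ii) with $\lambda = 1$, the point $\hat x$ is characterized by
\[
\langle x - \hat x, y - \hat x\rangle + f(\hat x) - f(y) \le 0 \qquad \forall y \in \R^d,
\]
which rearranges to $\langle x - \hat x, y\rangle - f(y) \le \langle x - \hat x, \hat x\rangle - f(\hat x)$ for all $y$. Taking the supremum over $y$ on the left, this says exactly that $f^*(x - \hat x) = \langle x - \hat x, \hat x\rangle - f(\hat x)$, i.e. equality holds in the Fenchel-Young inequality, which is equivalent to $\hat x \in \partial f^*(x - \hat x)$. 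Writing $p := x - \hat x$, the latter together with the subgradient inequality for $f^*$ is precisely the variational inequality characterizing $p = \prox_{f^*}(x)$ (again Theorem \ref{prox_1} ii), applied to $f^*\in\Gamma_0(\R^d)$, with $\lambda=1$). Hence $\prox_{f^*}(x) = x - \hat x = x - \prox_f(x)$, which is the first identity.

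For the second identity I would simply evaluate the envelopes at the splitting point. With $\hat x = \prox_f(x)$ and $p = x - \hat x = \prox_{f^*}(x)$ we have, by definition of the Moreau envelope,
\[
{}^1\!f(x) = \tfrac12\|x - \hat x\|_2^2 + f(\hat x) = \tfrac12\|p\|_2^2 + f(\hat x),
\qquad
{}^1\!f^*(x) = \tfrac12\|x - p\|_2^2 + f^*(p) = \tfrac12\|\hat x\|_2^2 + f^*(p).
\]
Adding these and using the Fenchel-Young equality $f(\hat x) + f^*(p) = \langle p, \hat x\rangle$ established above gives
\[
{}^1\!f(x) + {}^1\!f^*(x) = \tfrac12\|p\|_2^2 + \tfrac12\|\hat x\|_2^2 + \langle p,\hat x\rangle = \tfrac12\|p + \hat x\|_2^2 = \tfrac12\|x\|_2^2,
\]
since $p + \hat x = x$. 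This completes the proof.

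The one point that needs a little care — and the main obstacle — is the translation between the variational inequality of Theorem \ref{prox_1} ii) and the Fenchel-Young equality / subdifferential membership; one must be sure that $f^*$ also lies in $\Gamma_0(\R^d)$ so that Theorem \ref{prox_1} applies to it (this is standard for proper convex lsc $f$, and $f^{**}=f$), and that the supremum defining $f^*(p)$ is actually attained at $y=\hat x$, which is exactly what the rearranged inequality delivers. Everything else is bookkeeping with inner products. An alternative, even shorter route for the first identity is to invoke Fermat's rule on the objective of $\prox_{f^*}$ together with the stated formula ${}^\lambda\!f = (f^*+\tfrac12\|\cdot\|_2^2)^*$ and the inverse-subdifferential relation $p\in\partial f(\hat x)\iff \hat x\in\partial f^*(p)$, but the variational-inequality argument above is the most self-contained given what the excerpt has already proved.
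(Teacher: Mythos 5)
Your proof is correct. Note that the paper does not actually prove Theorem \ref{prox_2}: it simply refers the reader to \cite[Theorem 31.5]{Ro97}, so there is no in-text argument to compare against. Your derivation is a clean, self-contained substitute built entirely from material already established in the chapter: you translate the variational inequality of Theorem \ref{prox_1}~ii) (with $\lambda=1$) into the Fenchel--Young equality $f(\hat x)+f^*(x-\hat x)=\langle x-\hat x,\hat x\rangle$, recognize this as the subdifferential membership $\hat x\in\partial f^*(x-\hat x)$, and read that off as the variational characterization of $\prox_{f^*}(x)=x-\hat x$; the envelope identity then follows by evaluating both envelopes at their minimizers and completing the square. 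The two points you flag -- that $f^*\in\Gamma_0(\R^d)$ so Theorem \ref{prox_1} applies to it, and that the equivalence $p\in\partial f(\hat x)\iff\hat x\in\partial f^*(p)$ uses $f^{**}=f$ -- are exactly the hypotheses where $f\in\Gamma_0(\R^d)$ is needed, and you handle them correctly. This is essentially the classical argument in Rockafellar, so nothing is lost relative to the cited source, and the reader gains a proof that stays within the notation and tools of Section~\ref{sec:prox_operator}.
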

%------------------------------------------------------------------------
For a proof we refer to \cite[Theorem 31.5]{Ro97}.

%------------------------------------------------------------------------
\begin{remark}[Proximal operator and resolvent] \label{rem:resolvent} % {\rm (Proximal operator and resolvent)}\\
{\rm
The subdifferential operator is a set-valued function\index{set-valued function}
$\partial f : \mathbb R^d \rightarrow 2^{\mathbb R^d}$.
%$F := \partial f : \mathbb R^d \rightarrow 2^{\mathbb R^d}$.
For $f \in \Gamma_0(\R^d)$, we have by Fermat's rule and subdifferential calculus that
$\hat x = \prox_{\lambda \partial f} (x) $ if and only if
\begin{align*}
0 &\in \hat x - x + \lambda \partial f(\hat x),\\
x &\in (I+\lambda \partial f) (\hat x),
\end{align*}
which implies by the uniqueness of the proximum that
$\hat x = (I+\lambda \partial f)^{-1} (x)$. 
In particular,  
$J_{\lambda \partial f} := (I+\lambda \partial f)^{-1}$ is a single-valued operator which is called the
{\it resolvent} of the set-valued operator $\lambda \partial f$.
In summary, the proximal operator of $\lambda f$ coincides with the  resolvent of $\lambda \partial f$, i.e.,
$$
\prox_{\lambda f} = J_{\lambda \partial f}.
$$
}
\end{remark}
%------------------------------------------------------------------------
The proximal operator \eqref{def:prox} and the proximal algorithms described in Section \ref{sec:prox_algs}
can be generalized by introducing a symmetric, positive definite matrix $Q \in \R^{d,d}$ as follows:
\begin{equation} \label{precond}
\prox_{Q,\lambda f} := \argmin\limits_{y \in \mathbb R^d}  \left\{ \frac{1}{2\lambda} \|x-y\|_Q^2 + f(y) \right\},
\end{equation}
where $ \|x\|_Q^2 := x^\tT Q x$, see, e.g., \cite{CPR2013,CV2013,ZBO09}.

%-------------------------------------------------------------------------------------------------
\subsection{Special Proximal Operators} \label{subsec:prox_operator_specials}
%-------------------------------------------------------------------------------------------------
Algorithms involving the solution of proximal problems are only efficient
if the corresponding proximal operators can be evaluated in an efficient way. 
In the following we collect frequently appearing proximal mappings in image processing.
For  epigraphical projections see \cite{BC11,CPPP2013,HPS2013}.

\subsubsection{Orthogonal Projections}
The proximal operator generalizes the orthogonal projection operator.
The orthogonal projection of $x \in \R^d$ onto a non-empty, closed, convex set C is given by
$$
\Pi_C(x) := \argmin_{y \in C} \|x-y\|_2
$$
and can be rewritten for any $\lambda > 0$ as
$$
\Pi_C(x) = \argmin_{y \in \R^d} \left\{ \frac{1}{2\lambda} \|x-y\|_2^2 + \iota_C (y) \right\} = \prox_{\lambda \iota_C} (x).
$$
Some special sets $C$ are considered next.
\paragraph{Affine set} $C := \{y \in \R^d: Ay = b\}$ with $A \in \R^{m,d}$, $b \in \R^m$. \\ In case of $\|x-y\|_2  \rightarrow \min_y$ subject to $Ay = b$ we substitute $z:= x-y$ which leads to
$$\|z\|_2 \rightarrow \min_z \quad \mbox{subject to} \quad Az = r := Ax-b.$$
This can be directly solved, see \cite{Bj96}, and leads after back-substitution to
$$
\Pi_C (x) = x - A^\dagger (Ax-b),
$$
where $A^\dagger$ denotes the Moore-Penrose inverse of $A$.
\paragraph{Halfspace} $C := \{y \in \R^d: a^\tT y \le b\}$  with $a \in \R^{d}$, $b \in \R$.\\
A straightforward computation gives
$$
\Pi_C (x) = x - \frac{(a^\tT x - b)_+}{\|a\|_2^2} a.
$$
\paragraph{Box and Nonnegative Orthant} $C := \{y \in \R^d: l \le y \le u\}$ with $l, u \in \R^{d}$.\\
The proximal operator can be applied componentwise and gives
$$
(\Pi_C (x))_k = \left\{
\begin{array}{ll}
l_k&{\rm if} \; x_k < l_k,\\
x_k&{\rm if} \;  l_k \le x_k \le u_k,\\
u_k&{\rm if} \; x_k > u_k.
\end{array}
\right.
$$
For $l = 0$ and $u = +\infty$ we get the orthogonal projection onto the non-negative orthant
$$
\Pi_C (x) = x_+.
$$
\paragraph{Probability Simplex} $C := \{y \in \R^d: {\bf 1}^\tT y = \sum_{k=1}^d y_k = 1, \; y \ge 0\}$.\\
Here we have
$$
\Pi_C (x) = (x- \mu {\bf 1})_+,
$$
where $\mu \in \R$ has to be determined such that $h(\mu) := {\bf 1}^\tT (x- \mu {\bf 1})_+ = 1$.
Now $\mu$ can be found, e.g., by bisection
with starting interval $[\max_k x_k - 1, \max_k x_k]$ or by a method similar to those described in Subsection \ref{norms}
for projections onto the $\ell_1$-ball.
Note that $h$ is a linear spline function with knots $x_1,\ldots,x_d$ so that $\mu$ is completely determined
if we know the neighbor values $x_k$ of $\mu$.

\subsubsection{Vector Norms} \label{norms}
We consider the proximal operator of $f = \| \cdot \|_p$, $p \in [1,+\infty]$.
By the Moreau decomposition in Theorem \ref{prox_2}, regarding $(\lambda f)^* = \lambda f^*(\cdot / \lambda)$
and by \eqref{dual_norms} we obtain
\begin{align*}
\prox_{\lambda f} (x) &= x - \prox_{\lambda f^*} \left(\frac{\cdot}{\lambda} \right)\\
                      &= x - \Pi_{B_q(\lambda)} (x), 
\end{align*}
where $\frac{1}{p} + \frac{1}{q} = 1$.
Thus the proximal operator can be simply computed by the projections onto the $\ell_q$-ball.
In particular, it follows for $p = 1,2,\infty$:
\paragraph{$p = 1, \, q=\infty$:} For $k=1,\ldots,d$,
\begin{align*}
\big( \Pi_{B_\infty(\lambda)}(x) \big)_k = 
\left\{
\begin{array}{rcl}
x_k& {\rm if} & |x_k| \le \lambda,\\
 \lambda\, {\rm sgn}(x_k) & {\rm if} & |x_k| > \lambda,
\end{array}
\right.
\quad {\rm and} \quad
 \prox_{\lambda \| \cdot \|_1 } (x)  = 
 S_\lambda (x) ,
\end{align*}
where $S_\lambda(x)$, $x \in \R^d$, denotes the componentwise soft-shrinkage with threshold $\lambda$.
\paragraph{$p=q=2:$}
$$\Pi_{B_{2,\lambda}}(x) =
\left\{
\begin{array}{ccc}
x & {\rm if} &\|x\|_{2}\leq\lambda,\\
\lambda\frac{x}{\|x\|_{2}} & {\rm if} &\|x\|_{2}>\lambda,
\end{array}
\right.
\quad {\rm and} \quad
\prox_{\lambda \| \cdot \|_2 } (x) =
\left\{
\begin{array}{ccc}
0 & {\rm if} &\|x\|_{2}\leq\lambda,\\
x(1-\frac{\lambda}{\|x\|_{2}}) & {\rm if} &\|x\|_{2}>\lambda.
\end{array}
\right.
$$
\paragraph{$p=\infty, \, q = 1:$}
%\begin{align*}
%\Pi_{B_{1,\lambda}}(x) &=
%\left\{
%\begin{array}{ccl}
%x & {\rm if} &\|x\|_1\leq\lambda,\\
%S_\mu (x)  & {\rm if} &\|x\|_1 > \lambda
%\end{array}
%\right.
%\quad {\rm and} \quad
%\\
%\prox_{\lambda \| \cdot \|_\infty} (x) &=
%\left\{
%\begin{array}{ccc}
%0 & {\rm if} &\|x\|_{1}\leq\lambda,\\
%x -  S_\mu (x)  & {\rm if} &\|x\|_1 > \lambda
%\end{array}
%\right.
%\end{align*}
\begin{align*}
\Pi_{B_{1,\lambda}}(x) &=
\left\{
\begin{array}{ccl}
x & {\rm if} &\|x\|_1\leq\lambda,\\
S_\mu (x)  & {\rm if} &\|x\|_1 > \lambda,
\end{array}
\right.
\intertext{and}
\prox_{\lambda \| \cdot \|_\infty} (x) &=
\left\{
\begin{array}{ccc}
0 & {\rm if} &\|x\|_{1}\leq\lambda,\\
x -  S_\mu (x)  & {\rm if} &\|x\|_1 > \lambda,
\end{array}
\right.
\end{align*}
with
$\mu := \frac{|x_{\pi(1)}| + \ldots + |x_{\pi(m)}| - \lambda}{m}$,
where
$|x_{\pi(1)}| \ge \ldots \ge |x_{\pi(d)}| \ge 0$
are the sorted absolute values of the components of $x$
and
$m \le d$ is the largest index such that $|x_{\pi(m)}|$ is positive and
$
\frac{|x_{\pi(1)}| + \ldots + |x_{\pi(m)}| - \lambda}{m} \le |x_{\pi(m)}|,
$
see also \cite{DFL08,DSSC08}.
Another method follows similar lines as the projection onto the probability simplex in the previous subsection.
\\

Further, grouped/mixed $\ell_2$-$\ell_p$-norms are defined for 
$x = (x_1,\ldots,x_n)^\tT \in \R^{dn}$ with $x_j := (x_{jk})_{k=1}^d \in \R^d$, $j=1,\ldots,n$
by
$$
\|x\|_{2,p} := \| \left( \|x_j\|_2 \right)_{j=1}^n \|_p.
$$
For the $\ell_2$-$\ell_1$-norm we see that
$$
\prox_{\lambda \|\cdot \|_{2,1}} (x) = \argmin_{y \in \R^{dn}} \left\{ \frac{1}{2\lambda} \|x-y\|_2^2 + \|y\|_{2,1}  \right\}
$$
can be computed separately for each $j$ which results by the above considerations for the $\ell_2$-norm for each $j$ in
$$
\prox_{\lambda \|\cdot \|_{2}} (x_j) = \left\{
\begin{array}{ccc}
0 & {\rm if} &\|x_j\|_{2}\leq\lambda,\\
x_j(1-\frac{\lambda}{\|x_j\|_{2}}) & {\rm if} &\|x_j\|_{2}>\lambda.
\end{array}
\right. 
$$
The procedure for evaluating $\prox_{\lambda \|\cdot \|_{2,1}}$ is sometimes called {\it coupled or grouped shrinkage}.
\\

Finally, we provide the following rule from \cite[Prop. 3.6]{CP2007}.
\begin{lemma}	\label{prop:pesquet}
 Let $f = g + \mu |\cdot|$, where $g \in \Gamma_0(\R)$ is differentiable at $0$ with $g'(0)=0$.
 Then $\prox_{\lambda f} = \prox_{\lambda g} \circ S_{\lambda \mu}$.
\end{lemma}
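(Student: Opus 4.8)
The plan is to verify that the candidate point $\tilde y := \prox_{\lambda g}\bigl(S_{\lambda\mu}(x)\bigr)$ satisfies the optimality condition characterizing $\prox_{\lambda f}(x)$, and then to invoke uniqueness. By Remark~\ref{rem:resolvent} (i.e.\ Fermat's rule applied to \eqref{def:prox}), a point $\hat y$ equals $\prox_{\lambda f}(x)$ if and only if $x-\hat y\in\lambda\,\partial f(\hat y)$, and likewise $\tilde y=\prox_{\lambda g}(z)$ if and only if $z-\tilde y\in\lambda\,\partial g(\tilde y)$, where $z:=S_{\lambda\mu}(x)$. Since one always has the inclusion $\partial g(y)+\mu\,\partial|\cdot|(y)\subseteq\partial f(y)$, it suffices to exhibit a subgradient of $|\cdot|$ at $\tilde y$ for which $x-\tilde y\in\lambda\,\partial g(\tilde y)+\lambda\mu\,\partial|\cdot|(\tilde y)$. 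Using $z-\tilde y\in\lambda\,\partial g(\tilde y)$ and the splitting $x-\tilde y=(x-z)+(z-\tilde y)$, this reduces to showing
\[
x-z\ \in\ \lambda\mu\,\partial|\cdot|(\tilde y).
\]

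First I would record two facts about $\prox_{\lambda g}$. Because $g$ is differentiable at $0$ with $g'(0)=0$, we have $\partial g(0)=\{0\}$, so by Fermat's rule $0$ is a global minimizer of $g$ and hence a fixed point of $\prox_{\lambda g}$ by Theorem~\ref{prox_1} iii), i.e.\ $\prox_{\lambda g}(0)=0$. Moreover $\prox_{\lambda g}$ is nondecreasing on $\R$, being the resolvent of the monotone operator $\lambda\,\partial g$. Consequently $\tilde y=\prox_{\lambda g}(z)$ has the same sign as $z$ (and $\tilde y=0$ whenever $z=0$), which gives $\partial|\cdot|(z)\subseteq\partial|\cdot|(\tilde y)$ in every case ($z>0$: $\{1\}\subseteq\partial|\cdot|(\tilde y)$; $z=0$: $[-1,1]=\partial|\cdot|(0)=\partial|\cdot|(\tilde y)$; $z<0$ symmetric). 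On the other hand $z=S_{\lambda\mu}(x)=\prox_{\lambda\mu|\cdot|}(x)$ by Example~\ref{ex:prox}, whose own optimality condition reads $x-z\in\lambda\mu\,\partial|\cdot|(z)$. Chaining the two yields $x-z\in\lambda\mu\,\partial|\cdot|(z)\subseteq\lambda\mu\,\partial|\cdot|(\tilde y)$, exactly what was needed; therefore $0\in\tilde y-x+\lambda\,\partial f(\tilde y)$, and $\tilde y=\prox_{\lambda f}(x)$ by Theorem~\ref{prox_1} i).

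I do not expect a serious obstacle; the one delicate point is pinning down the sign of $\tilde y=\prox_{\lambda g}(z)$, since mere nonexpansiveness of $\prox_{\lambda g}$ bounds $|\tilde y|$ but not its sign. This is precisely where monotonicity of $\partial g$ (equivalently, $\prox_{\lambda g}$ nondecreasing together with the fixed point $\prox_{\lambda g}(0)=0$) enters. I also note that the sum rule $\partial f=\partial g+\mu\,\partial|\cdot|$ in fact holds with equality by the Moreau--Rockafellar theorem, since $\dom(\mu|\cdot|)=\R$, but only the trivial inclusion is used above; and if one prefers not to cite Example~\ref{ex:prox}, the inclusion $x-z\in\lambda\mu\,\partial|\cdot|(z)$ is immediate from the three-case description of $S_{\lambda\mu}$.
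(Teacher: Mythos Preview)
Your argument is correct. The reduction to showing $x-z\in\lambda\mu\,\partial|\cdot|(\tilde y)$ via the splitting $x-\tilde y=(x-z)+(z-\tilde y)$ is clean, and the sign analysis is the right way to close it: from $\prox_{\lambda g}(0)=0$ (since $g'(0)=0$ makes $0$ a minimizer of $g$) together with the monotonicity of the one-dimensional resolvent, you get $\operatorname{sgn}(\tilde y)\in\{0,\operatorname{sgn}(z)\}$ whenever $z\neq 0$, and $\tilde y=0$ when $z=0$, which is exactly what is needed for $\partial|\cdot|(z)\subseteq\partial|\cdot|(\tilde y)$. Your remark that only the trivial inclusion $\partial g+\mu\,\partial|\cdot|\subseteq\partial f$ is required (rather than the full Moreau--Rockafellar sum rule) is also accurate.

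As for comparison: the paper does not give its own proof of this lemma; it simply quotes the statement from \cite[Prop.~3.6]{CP2007}. So there is nothing in the paper to compare against. Your proof is self-contained using only the tools already developed in the chapter (Fermat's rule, Theorem~\ref{prox_1}, Example~\ref{ex:prox}, and the resolvent interpretation of Remark~\ref{rem:resolvent}), which is arguably preferable in this expository context to an external citation.
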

\begin{example} \label{ex:elastic_net}
{\rm
Consider the {\it elastic net} regularizer\index{elastic net} 
$f(x) := \frac{1}{2}\|x\|_2^2 + \mu \|x\|_1$, see \cite{ZH05}. 
Setting the  gradient  in the proximal operator of $g := \frac12 \| \cdot \|_2 ^2$ to zero we obtain 
$$
\prox_{\lambda g} (x) = \frac{1}{1+\lambda} x.
$$
The whole proximal operator of $f$ can be then evaluated componentwise
and we see by Lemma \ref{prop:pesquet} that
$$
\prox_{\lambda f} (x) = \prox_{\lambda g}  \left( S_{\lambda \mu} (x) \right) = \frac{1}{1 + \lambda} S_{\mu \lambda} (x).
$$
}
\end{example}

%----------------------------------------------------
\subsubsection{Matrix Norms} \label{matrix_norms}
Next we deal with proximation problems involving matrix norms.
For $X\in\mathbb{R}^{m,n}$, we are looking for
\begin{equation} \label{matrix_prox}
\prox_{\lambda \| \cdot \|} (X)= \argmin_{Y\in\mathbb{R}^{m,n}} \left \{\frac{1}{2\lambda}\|X-Y\|_{{\cal F}}^{2}+\|Y\| \right\},
\end{equation}
where $\|\cdot\|_{{\cal F}}$ is the Frobenius norm and $\|\cdot\|$ is any unitarily invariant matrix norm, i.e.,
$\|X\|=\|UXV^{T}\|$ for all unitary matrices $U\in\mathbb{R}^{m,m},V\in\mathbb{R}^{n,n}$.
Von Neumann (1937) \cite{vN37} has characterized the unitarily invariant matrix norms as those matrix norms
which can be written in the form
$$\|X\| = g(\sigma(X)),$$ 
where $\sigma(X)$ is the vector of singular values of $X$ and $g$ is a
symmetric {\it gauge function}, see \cite{Wa92}. Recall that $g:\R^d \rightarrow \R_+$
is a symmetric gauge function if it is a positively homogeneous convex function which vanishes at the origin 
%sublinear instead of convex
and fulfills
$$
g(x) = g(\epsilon_1 x_{k_1}, \ldots, \epsilon_k x_{k_d}) 
$$
for all $\epsilon_k \in \{-1,1\}$ and all permutations $k_1,\ldots, k_d$ of indices.
An analogous result was given by Davis \cite{Da57} for symmetric matrices,
where $V^\tT$ is replaced by $U^\tT$ and the singular values by the eigenvalues.

We are interested in the {\it Schatten-$p$ norms} for $p=1,2,\infty$ which are defined  for
$X \in \mathbb R^{m,n}$ and $t:=\min\{m,n\}$
by
\begin{eqnarray*}
\|X\|_{*}&:=&\sum\limits_{i=1}^{t}\sigma_{i}(X) =g_{*}(\sigma(X))=\|\sigma(X)\|_{1},
\qquad \qquad\textnormal{(Nuclear norm)}\\
\|X\|_{\mathcal{F}}&:=&(\sum\limits_{i=1}^{m}\sum\limits_{j=1}^{n}x_{ij}^{2})^{\frac{1}{2}}=
(\sum\limits_{i=1}^{t}\sigma_{i}(X)^{2})^{\frac{1}{2}}=g_{\mathcal{F}}(\sigma(X))=\|\sigma(X)\|_{2},
\quad \textnormal{(Frobenius norm)}\\
\|X\|_{2}&:=&\max\limits_{i=1,...,t}\sigma_{i}(X)=g_{2}(\sigma(X))=\|\sigma(X)\|_{\infty},
\qquad \qquad \textnormal{(Spectral norm)}.
\end{eqnarray*}

The following theorem shows that the solution of \eqref{matrix_prox} reduces to a proximal problem for the vector norm
of the singular values of $X$. Another proof for the special case of
the nuclear norm can be found in \cite{CCS10}.
\begin{theorem} \label{matrix-shrinkage}
Let $X =U \Sigma_{X} V^\tT$ be the singular value decomposition of $X$ and $\| \cdot\|$ a unitarily invariant matrix norm.
Then  $\prox_{\lambda \| \cdot \|} (X)$  in \eqref{matrix_prox}
is given by $\hat{X}= U \Sigma_{\hat X} V^\tT$, where the singular values $\sigma(\hat X)$ in $\Sigma_{\hat X}$
are determined by
\begin{equation} \label{soln}
\sigma(\hat X) := \prox_{\lambda g} (\sigma(X)) = \argmin_{\sigma\in\mathbb{R}^{t}}
\{\frac{1}{2}\|\sigma(X) - \sigma\|_{2}^{2}+\lambda g(\sigma)\}
\end{equation}
with  the symmetric gauge function $g$  corresponding to $\|\cdot\|$.
\end{theorem}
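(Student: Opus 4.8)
The plan is to reduce the matrix proximity problem \eqref{matrix_prox} to the vector proximity problem \eqref{soln}, exploiting the unitary invariance of both the Frobenius norm and of $\|\cdot\|$; the analytic engine is von Neumann's trace inequality. Fix an arbitrary $Y\in\R^{m,n}$ and write $\langle X,Y\rangle := \operatorname{tr}(X^{\tT}Y)$ for the Frobenius inner product. Using
$$
\|X-Y\|_{\mathcal F}^2 = \|X\|_{\mathcal F}^2 - 2\langle X,Y\rangle + \|Y\|_{\mathcal F}^2 = \|\sigma(X)\|_2^2 - 2\langle X,Y\rangle + \|\sigma(Y)\|_2^2
$$
and $\|Y\| = g(\sigma(Y))$, the first step is to invoke von Neumann's trace inequality $\langle X,Y\rangle \le \langle \sigma(X),\sigma(Y)\rangle$, with equality precisely when $X$ and $Y$ admit a simultaneous singular value decomposition, i.e.\ $Y = U\Sigma_Y V^{\tT}$ with the same orthogonal factors $U,V$ as in $X = U\Sigma_X V^{\tT}$ and the diagonal of $\Sigma_Y$ arranged in the same (nonincreasing) order as that of $\Sigma_X$. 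Substituting this bound, the objective of \eqref{matrix_prox} satisfies, for every $Y$,
$$
\frac{1}{2\lambda}\|X-Y\|_{\mathcal F}^2 + \|Y\| \ \ge\ \frac{1}{2\lambda}\|\sigma(X)-\sigma(Y)\|_2^2 + g(\sigma(Y)) \ \ge\ \min_{s\in\R^t}\Big\{\frac{1}{2\lambda}\|\sigma(X)-s\|_2^2 + g(s)\Big\},
$$
and the rightmost minimum is attained at $s^\star := \prox_{\lambda g}(\sigma(X))$ by \eqref{def:prox}; this is the same point as in \eqref{soln}, since scaling the objective by $\lambda$ does not change the minimizer.

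The second step is to verify that $s^\star$ is itself a legitimate vector of singular values, i.e.\ that its entries are nonnegative and sorted nonincreasingly. This is where the defining properties of the symmetric gauge function $g$ enter. If some $s^\star_i<0$, replacing it by $|s^\star_i|$ leaves $g(s^\star)$ unchanged by sign invariance while strictly decreasing $\|\sigma(X)-s^\star\|_2^2$ because $\sigma_i(X)\ge 0$, contradicting minimality. Likewise, if $\sigma_i(X)>\sigma_j(X)$ but $s^\star_i<s^\star_j$, transposing these two entries leaves $g(s^\star)$ unchanged by permutation invariance and strictly decreases the quadratic term, again a contradiction. Hence $s^\star\ge 0$ is ordered compatibly with $\sigma(X)$, and therefore $s^\star=\sigma(\hat X)$ for the matrix $\hat X := U\Sigma_{\hat X}V^{\tT}$, where $\Sigma_{\hat X}\in\R^{m,n}$ is the diagonal matrix carrying $s^\star$.

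Finally, since $\hat X$ shares the singular vectors of $X$ with matching order, von Neumann's inequality is an equality for $Y=\hat X$, so both inequalities in the display above are equalities at $\hat X$ and the objective of \eqref{matrix_prox} at $\hat X$ equals the rightmost minimum. Thus $\hat X$ is a global minimizer of \eqref{matrix_prox}, and by part i) of Theorem \ref{prox_1} it is the unique one, so $\prox_{\lambda\|\cdot\|}(X) = \hat X = U\Sigma_{\hat X}V^{\tT}$ with $\sigma(\hat X)=\prox_{\lambda g}(\sigma(X))$, which is the assertion.

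The main obstacle is von Neumann's trace inequality together with its equality case: the bound $\langle X,Y\rangle \le \sum_i \sigma_i(X)\sigma_i(Y)$ is not elementary --- one standard route goes through Birkhoff's theorem on doubly stochastic matrices, another through a direct maximization over the orthogonal group --- and identifying exactly when equality holds, which is what turns the lower bound into the genuine value of the proximal operator, needs care; in a survey one would simply cite \cite{vN37}. A minor, routine point is that $g\in\Gamma_0(\R^t)$, so that $\prox_{\lambda g}$ is well defined: this is immediate since a symmetric gauge function is convex, finite-valued, and positively homogeneous, hence continuous and proper. The remaining ingredients --- the Frobenius expansion, the chain of inequalities, and the rearrangement argument --- are straightforward.
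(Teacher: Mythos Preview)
Your proof is correct and takes a genuinely different route from the paper. The paper argues via Fermat's rule: it characterizes the minimizer $\hat X$ of \eqref{matrix_prox} by the inclusion $0\in\hat X - X + \lambda\,\partial\|\hat X\|$, invokes Watson's formula \cite{Wa92} for the subdifferential of a unitarily invariant norm, and then simply verifies that the candidate $\hat X = U\,{\rm diag}(\hat\sigma)\,V^{\tT}$ with $\hat\sigma = \prox_{\lambda g}(\sigma(X))$ satisfies this inclusion. Your argument is instead variational: you bound the matrix objective from below via von Neumann's trace inequality and show the bound is attained. The paper's route is shorter once the subdifferential formula is taken for granted, but that formula is itself nontrivial and in effect encodes von Neumann's inequality; your approach trades that black box for a more classical one. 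Your proof is also more careful on a point the paper glosses over: you explicitly check, using the sign and permutation invariance of $g$, that $\prox_{\lambda g}(\sigma(X))$ is nonnegative and ordered compatibly with $\sigma(X)$, so that it really is a singular-value vector and $U\,{\rm diag}(\hat\sigma)\,V^{\tT}$ is indeed an SVD of $\hat X$. (A minor nit: the strict decrease in your nonnegativity step needs $\sigma_i(X)>0$; when $\sigma_i(X)=0$ the replacement gives equality, but then uniqueness of the prox from Theorem~\ref{prox_1} still rules out $s^\star_i<0$.)
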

\begin{proof} 
By Fermat's rule we know that the solution $\hat{X}$ of \eqref{matrix_prox} is determined by
\begin{equation} \label{to_fulfill}
0\in \hat{X} - X+\lambda\partial\|\hat{X}\|
\end{equation}
and from \cite{Wa92} that
\begin{equation} \label{to_fulfill_1}
\partial\|X\|=\operatorname{conv}\{UDV^\tT: X = U \Sigma_X V^\tT, \, D={\rm diag}(d),\,
d\in\partial g(\sigma(X))\}.
\end{equation}
We now construct the unique solution $\hat{X}$ of \eqref{to_fulfill}.
Let $\hat \sigma$ be the unique solution of \eqref{soln}.
By Fermat's rule $\hat \sigma$ satisfies
$
0 \in \hat \sigma - \sigma(X) + \lambda\partial g(\hat \sigma)
$
and consequently there exists $d \in \partial g(\hat \sigma)$ such that
\begin{eqnarray*}
0 = U \big( {\rm diag} (\hat \sigma) - \Sigma_{X} + \lambda {\rm diag} (d) \big)  V_{F}^\tT 
&\Leftrightarrow& 0 = U \, {\rm diag} (\hat \sigma) \, V^\tT - X + \lambda U \, {\rm diag} (d) \,  V^\tT.
\end{eqnarray*}
By \eqref{to_fulfill_1} we see that $\hat X := U \, {\rm diag} (\hat \sigma) \, V^\tT$ is a solution of \eqref{to_fulfill}.
This completes the proof. 
\end{proof}

For the special matrix norms considered above, we obtain by the previous subsection
\begin{align*}
\| \cdot\|_*: & \quad
\sigma(\hat{X}) := \sigma(X) - \Pi_{B_{\infty,\lambda}}(\sigma(X)),\\
\| \cdot\|_{\mathcal F}:& \quad
\sigma(\hat{X}) :=  \sigma(X) - \Pi_{B_{2,\lambda}}(\sigma(X)),\\
\| \cdot\|_2:& \quad
\sigma(\hat{X}) := \sigma(X) - \Pi_{B_{1,\lambda}}(\sigma(X)).
\end{align*}

%-------------------------------------------------------------------------------------------------
\section{Fixed Point Algorithms and Averaged Operators}
%--------------------------------------------------------------------------------------------------
%
An operator $T: \mathbb R^d \rightarrow \mathbb R^d$ is 
{\it contractive}
\index{function! contractive}\index{contractive|see{function}}  if 
it is Lipschitz continuous \index{function!Lipschitz continuous}\index{Lipschitz continuous|see{function}} 
with Lipschitz constant $L < 1$, i.e., there exists a norm $\| \cdot \|$ on $\mathbb R ^d$ such that
\[
 \| Tx-Ty \|\leq L \| x-y \| \quad \forall x,y \in \mathbb R^d.
\]
In case $L = 1$, the operator is called {\it nonexpansive}\index{function! contractive}\index{contractive|see{function}}.
A function $T: \R^d \supset \Omega \rightarrow \mathbb R^d$ 
is {\it firmly nonexpansive}\index{firmly nonexpansive} if it fulfills for all $x,y \in \R^d$ 
one of the following equivalent conditions \cite{BC11}:
\begin{align} 
\|T x -T y\|_2^2 &\le \langle x-y,T x -T y\rangle, \nonumber\\
\|Tx-Ty\|_2^2 &\le \|x-y\|_2^2-\|(I-T)x-(I-T)y\|_2^2. \label{fne_1}
\end{align}
In particular we see that a firmly nonexpansive function is nonexpansive.
%
%------------------------------------
\begin{lemma} \label{fne_prox}
For $f \in \Gamma_0(\R^d)$, the proximal operator $\prox_{\lambda f}$ is firmly nonexpansive.
In particular the orthogonal projection onto convex sets is firmly nonexpansive.
\end{lemma}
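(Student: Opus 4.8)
The plan is to obtain firm nonexpansiveness directly from the variational characterization of the proximal operator established in Theorem \ref{prox_1} ii). Fix $x, y \in \R^d$ and write $\hat x := \prox_{\lambda f}(x)$ and $\hat y := \prox_{\lambda f}(y)$. Applying the variational inequality \eqref{basic_problem_1} for $\hat x$ with the test point $\hat y$ gives
$$
\frac{1}{\lambda}\langle x - \hat x, \hat y - \hat x\rangle + f(\hat x) - f(\hat y) \le 0,
$$
and applying it for $\hat y$ with the test point $\hat x$ gives
$$
\frac{1}{\lambda}\langle y - \hat y, \hat x - \hat y\rangle + f(\hat y) - f(\hat x) \le 0.
$$

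Next I would add these two inequalities. The values $f(\hat x)$ and $f(\hat y)$ cancel, and after multiplying through by $\lambda > 0$ one is left with
$$
\langle x - \hat x, \hat y - \hat x\rangle + \langle y - \hat y, \hat x - \hat y\rangle \le 0.
$$
Regrouping the left-hand side as $\langle (x - y) - (\hat x - \hat y),\, \hat y - \hat x\rangle$ and using $\hat y - \hat x = -(\hat x - \hat y)$, this becomes $\|\hat x - \hat y\|_2^2 - \langle x - y, \hat x - \hat y\rangle \le 0$, i.e.
$$
\|\hat x - \hat y\|_2^2 \le \langle x - y,\, \hat x - \hat y\rangle ,
$$
which is exactly the first of the two equivalent defining conditions for firm nonexpansiveness applied to $T = \prox_{\lambda f}$; the equivalence with \eqref{fne_1} is the cited fact from \cite{BC11}. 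Finally, for the "in particular" assertion I would only recall that for a nonempty, closed, convex set $C$ one has $\iota_C \in \Gamma_0(\R^d)$ and $\Pi_C = \prox_{\lambda \iota_C}$ for any $\lambda>0$, as noted in the subsection on orthogonal projections, so the first part applies directly.

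I do not expect a genuine obstacle here: the argument is two lines of the variational inequality plus one reindexing. The only points needing mild care are keeping straight which minimizer is tested against which point, and the sign bookkeeping when rewriting $\hat y - \hat x$ as $-(\hat x - \hat y)$. An alternative route would go through Remark \ref{rem:resolvent}, invoking $\prox_{\lambda f} = (I + \lambda\partial f)^{-1}$ together with monotonicity of $\partial f$, but the variational-inequality proof above is shorter and self-contained given the results already available in the excerpt.
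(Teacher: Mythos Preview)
Your proof is correct and follows essentially the same route as the paper: apply the variational inequality of Theorem~\ref{prox_1}\,ii) at each of the two points, tested against the other point's proximum, add the two inequalities, and rearrange to obtain the firm-nonexpansiveness inequality. Your version is in fact slightly more careful than the paper's, since you keep the $f(\hat x)-f(\hat y)$ terms explicitly and let them cancel upon addition, whereas the paper's written proof drops them from the outset.
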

%------------------------------------
%
\begin{proof} By Theorem \ref{prox_1}ii) we have that
$$
\langle x - \prox_{\lambda f} (x), z - \prox_{\lambda f} (x) \rangle \le 0 \quad \forall z \in \mathbb R^d.
$$
With $z := \prox_{\lambda f} (y)$ this gives
$$
\langle x - \prox_{\lambda f} (x), \prox_{\lambda f} (y) - \prox_{\lambda f} (x) \rangle \le 0
$$
and similarly 
$$
\langle y - \prox_{\lambda f} (y), \prox_{\lambda f} (x) - \prox_{\lambda f} (y) \rangle \le 0.
$$
Adding these inequalities we obtain
\begin{eqnarray*}
\langle x - \prox_{\lambda f} (x) + \prox_{\lambda f} (y) - y,\prox_{\lambda f} (y) - \prox_{\lambda f} (x) \rangle &\le& 0,\\
\|\prox_{\lambda f} (y) - \prox_{\lambda f} (x)\|_2^2 &\le& \langle y-x,\prox_{\lambda f} (y)- \prox_{\lambda f} (x) \rangle. 
\end{eqnarray*}
\end{proof}
The Banach fixed point theorem \index{Banach fixed point theorem} guarantees that a contraction has a unique fixed point and that the 
{\it Picard sequence} \index{Picard sequence} 
\begin{eqnarray} \label{picard}
x^{(r+1)} = T x^{(r)}
\end{eqnarray}
converges to this fixed point for every initial element $x^{(0)}$.
However, in many applications the contraction property is too restrictive 
in the sense that we often do not have a unique fixed point. 
Indeed, it is quite natural in many cases that the reached fixed point depends on the starting value $x^{(0)}$.
Note that if $T$ is continuous and $(T^r x^{(0)})_{r \in \mathbb N}$ is convergent, 
then it converges to a fixed point of $T$.
In the following, we denote by ${\rm Fix} (T)$ the {\it set of fixed points} \index{fixed points} of $T$.
Unfortunately, we do not have convergence of $(T^r x^{(0)})_{r \in \mathbb N}$  just for nonexpansive operators
as the following example shows.
%
%--------------------------------------------------------------------------
\begin{example} \label{example_av}
{\rm
 In $\R^2$ we consider the reflection operator 
\[
R: =\left( \begin{array}{rr} 1 & 0 \\ 0 & -1 \end{array} \right). 
\]
Obviously, $R$ is nonexpansive and we only have convergence 
of $( R^r x^{(0)} )_{r \in \mathbb N}$ 
if $x^{(0)} \in {\rm Fix}(R) = {\rm span}\{(1,0)^\tT\}$. 
A possibility to obtain a 'better' operator is to average $R$, i.e., to build
$$T:=\alpha I + (1-\alpha) R = \left( \begin{array}{rr} 1 & 0 \\ 0 & 2\alpha - 1 \end{array} \right), \quad \alpha \in (0,1).$$ 
By
\begin{equation} \label{same_fixed_points}
Tx = x \Leftrightarrow \alpha x +(1-\alpha)R(x) = x\Leftrightarrow (1-\alpha)R(x)=(1-\alpha)x,
\end{equation}
we see that $R$ and $T$ have the same fixed points. 
Moreover, since $2\alpha - 1 \in (-1,1)$,
the sequence $(T^{r}x^{(0)})_{r \in \mathbb N}$ converges to $(x^{(0)}_1,0)^\tT$ for every $x^{(0)}=(x_1^{(0)},x_2^{(0)})^\tT \in \R^2$.
}
\end{example}
%--------------------------------------------------------------------------
\vspace{0.2cm}

An operator $T: \mathbb R^d \rightarrow \mathbb R^d$ is called 
{\it averaged}\index{function!averaged}\index{averaged|see{function}} 
if there exists a nonexpansive mapping $R$ and a constant $\alpha\in (0,1)$ such that
\[
 T=\alpha I + (1-\alpha) R.
\]
Following \eqref{same_fixed_points} we see that
$$
{\rm Fix}(R) = {\rm Fix}(T).
$$
Historically, the concept of averaged mappings can be traced back to \cite{Kr55,Ma53,Sch57}, 
where the name 'averaged' was not used yet. 
Results on averaged operators can also be found, e.g., in \cite{BC11,By04,Co04}.
\begin{lemma}[Averaged, (Firmly) Nonexpansive and Contractive Operators] \label{lemma:averaged_1} {\color{white} space} % {\rm (Averaged, (Firmly) Nonexpansive and Contractive Operators)}
\begin{itemize}
\item[{\rm i)}] Every averaged operator is nonexpansive.
\item[{\rm ii)}]
A contractive operator $T:\mathbb R^d\rightarrow \mathbb R^d$ 
with Lipschitz constant $L<1$ is averaged with respect to all parameters $\alpha \in (0,(1-L)/2]$.
\item[{\rm iii)}]
An operator is firmly nonexpansive if and only if it is averaged with $\alpha = \frac12$.
\end{itemize}
\end{lemma}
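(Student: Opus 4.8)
The plan is to verify each of the three items essentially by unwinding the definition $T = \alpha I + (1-\alpha)R$ and using the two equivalent characterizations of firm nonexpansiveness recorded in the excerpt.

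For item i), I would start from $Tx - Ty = \alpha(x-y) + (1-\alpha)(Rx-Ry)$ and apply the triangle inequality together with nonexpansiveness of $R$, giving $\|Tx-Ty\| \le \alpha\|x-y\| + (1-\alpha)\|Rx-Ry\| \le \alpha\|x-y\| + (1-\alpha)\|x-y\| = \|x-y\|$. This is immediate and needs no convexity-of-the-norm-squared argument.

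For item ii), given a contraction $T$ with constant $L<1$, I must exhibit a nonexpansive $R$ and $\alpha \in (0,(1-L)/2]$ with $T = \alpha I + (1-\alpha)R$. The natural choice is to solve for $R$: set $R := \frac{1}{1-\alpha}\big(T - \alpha I\big)$. Then I would estimate, for $x,y \in \R^d$,
\[
\|Rx - Ry\| = \frac{1}{1-\alpha}\,\|(Tx-Ty) - \alpha(x-y)\| \le \frac{1}{1-\alpha}\big(\|Tx-Ty\| + \alpha\|x-y\|\big) \le \frac{L+\alpha}{1-\alpha}\,\|x-y\|,
\]
and the condition $\frac{L+\alpha}{1-\alpha} \le 1$ is equivalent to $2\alpha \le 1-L$, i.e. $\alpha \le (1-L)/2$; hence $R$ is nonexpansive (even contractive) for every admissible $\alpha$, and $T = \alpha I + (1-\alpha)R$ by construction.

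For item iii), I would argue both directions. If $T$ is averaged with $\alpha = \tfrac12$, write $T = \tfrac12 I + \tfrac12 R$ with $R$ nonexpansive; then $I - T = \tfrac12(I-R)$, so $\|(I-T)x - (I-T)y\|_2^2 = \tfrac14\|(I-R)x-(I-R)y\|_2^2$, while from $R = 2T - I$ one computes $\|Rx-Ry\|_2^2 = \|2(Tx-Ty) - (x-y)\|_2^2 = 4\|Tx-Ty\|_2^2 - 4\langle x-y, Tx-Ty\rangle + \|x-y\|_2^2 \le \|x-y\|_2^2$ by nonexpansiveness of $R$; rearranging gives exactly $\|Tx-Ty\|_2^2 \le \langle x-y, Tx-Ty\rangle$, the first characterization of firm nonexpansiveness. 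Conversely, if $T$ is firmly nonexpansive, define $R := 2T - I$; then the same algebraic identity shows $\|Rx-Ry\|_2^2 = 4\|Tx-Ty\|_2^2 - 4\langle x-y,Tx-Ty\rangle + \|x-y\|_2^2 \le \|x-y\|_2^2$ precisely because of the firm nonexpansiveness inequality, so $R$ is nonexpansive and $T = \tfrac12 I + \tfrac12 R$ is averaged with $\alpha = \tfrac12$.

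I do not anticipate a genuine obstacle here; the only point requiring a little care is item iii), where one should use the $\ell_2$-based characterization \eqref{fne_1} (equivalently the first displayed inequality) rather than an arbitrary norm, since the expansion of $\|2(Tx-Ty)-(x-y)\|_2^2$ relies on the inner-product structure. All three parts are then short computations combining the triangle inequality, the definition of averagedness, and the quadratic identity $\|2a-b\|_2^2 = 4\|a\|_2^2 - 4\langle a,b\rangle + \|b\|_2^2$.
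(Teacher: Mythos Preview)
Your proposal is correct and follows essentially the same approach as the paper: parts i) and ii) are identical (triangle inequality and solving $R=\frac{1}{1-\alpha}(T-\alpha I)$), and part iii) uses the same reflection $R=2T-I$, the only cosmetic difference being that you expand $\|Rx-Ry\|_2^2$ to reach the inner-product characterization of firm nonexpansiveness, whereas the paper manipulates the same identity to reach the equivalent form \eqref{fne_1}.
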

\begin{proof} i) Let $T = \alpha I + (1-\alpha) R$ be averaged. Then the first assertion follows by
$$
\|T(x) - T(y)\|_2 
\le 
\alpha \|x-y\|_2 + (1-\alpha) \| R(x) - R(y) \|_2
\le 
\|x-y\|_2.
$$
ii)
We define the operator 
$R:=\frac{1}{1-\alpha}(T-\alpha I)$. 
It holds for all $x,y\in \mathbb R^d$ 
that 
\begin{eqnarray*}
 \|Rx-Ry\|_2&=& \frac{1}{1-\alpha} \| (T-\alpha I)x - (T-\alpha I) y \|_2, \\
&\leq& \frac{1}{1-\alpha} \|Tx - Ty\|_2 + \frac{\alpha}{1-\alpha} \|x-y\|_2, \\
&\leq& \frac{L + \alpha}{1-\alpha} \|x - y\|_2,
\end{eqnarray*}
so $R$ is nonexpansive if  $\alpha \leq (1-L)/2$.
\\
iii)
With
$R: = 2T-I=T-(I-T)$ we obtain the following equalities
\begin{eqnarray*}
\| Rx-Ry\|_2^2 &=& \|Tx-Ty-((I-T)x-(I-T)y)\|_2^2\\
&=&-\|x-y\|_2^2+2\|Tx-Ty\|_2^2+2\|(I-T)x-(I-T)y\|_2^2
\end{eqnarray*}
and therefore after reordering
\begin{eqnarray*}
&&\|x-y\|_2^2-\|Tx-Ty\|_2^2-\|(I-T)x-(I-T)y\|_2^2\\
&=& \|Tx-Ty\|_2^2+\|(I-T)x-(I-T)y\|_2^2-\|Rx-Ry\|_2^2\\
&=& \frac12(\|x-y\|_2^2+\|Rx-Ry\|_2^2)-\|Rx-Ry\|_2^2\\
&=& \frac12(\|x-y\|_2^2-\|Rx-Ry\|_2^2).
\end{eqnarray*}
If $R$ is nonexpansive, then the last expression is $\ge 0$ and
consequently \eqref{fne_1} holds true so that $T$ is firmly nonexpansive. Conversely, if $T$ fulfills \eqref{fne_1}, then
$$
\frac12 \big(\|x-y\|_2^2-\|Rx-Ry\|_2^2 \big)\ge 0
$$
so that $R$ is nonexpansive. This completes the proof.
\end{proof}
By the following lemma averaged operators are closed under composition.
%
%--------------------------------------------------------------------
\begin{lemma}[Composition of Averaged Operators] \label{better_alpha} {\color{white} space} % {\rm (Composition of Averaged Operators)}
\begin{itemize}
\item[{\rm i)}]
 Suppose that $T:\mathbb R^d \rightarrow \mathbb R^d$ is averaged with respect to $\alpha \in (0,1)$. 
 Then, it is also averaged with respect to any other parameter $\tilde{\alpha} \in (0,\alpha]$.
 \item[{\rm ii)}]
 Let $T_1, T_2:\mathbb R^d \rightarrow \mathbb R^d$ be averaged operators. Then, $T_2\circ T_1$ is also averaged.
 \end{itemize}
\end{lemma}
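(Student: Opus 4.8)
The plan is to treat i) and ii) separately, in both cases returning to the defining representation $T = \alpha I + (1-\alpha) R$ with $R$ nonexpansive, and exploiting two elementary closure facts: the composition of nonexpansive operators is nonexpansive (from $\|R_2 R_1 x - R_2 R_1 y\| \le \|R_1 x - R_1 y\| \le \|x-y\|$), and any convex combination $\sum_i \lambda_i N_i$ of nonexpansive operators is nonexpansive (by the triangle inequality, since the $\lambda_i \ge 0$ sum to $1$).

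For i), given $T = \alpha I + (1-\alpha) R$ and $\tilde\alpha \in (0,\alpha]$, I would simply solve the equation $T = \tilde\alpha I + (1-\tilde\alpha)\tilde R$ for the candidate operator $\tilde R := \tfrac{1}{1-\tilde\alpha}(T - \tilde\alpha I)$. Substituting the representation of $T$ gives
\[
\tilde R \;=\; \frac{\alpha-\tilde\alpha}{1-\tilde\alpha}\, I \;+\; \frac{1-\alpha}{1-\tilde\alpha}\, R .
\]
Both coefficients are nonnegative (here $\alpha-\tilde\alpha\ge 0$ enters) and they sum to $1$, so $\tilde R$ is a convex combination of the nonexpansive operators $I$ and $R$, hence nonexpansive. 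Thus $T = \tilde\alpha I + (1-\tilde\alpha)\tilde R$ exhibits $T$ as averaged with respect to $\tilde\alpha$.

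For ii), write $T_i = \alpha_i I + (1-\alpha_i) R_i$ with $R_i$ nonexpansive and $\alpha_i \in (0,1)$, and expand the composition by first inserting the representation of $T_2$ and then that of $T_1$:
\[
T_2\circ T_1 \;=\; \alpha_2 T_1 + (1-\alpha_2)(R_2\circ T_1) \;=\; \alpha_1\alpha_2\, I + \alpha_2(1-\alpha_1)\, R_1 + (1-\alpha_2)\,(R_2\circ T_1).
\]
The coefficients of the last two terms are nonnegative and add up to $1-\alpha_1\alpha_2$, so $T_2\circ T_1 = \alpha_1\alpha_2\, I + (1-\alpha_1\alpha_2)\, N$ with
\[
N \;:=\; \frac{\alpha_2(1-\alpha_1)}{1-\alpha_1\alpha_2}\, R_1 + \frac{1-\alpha_2}{1-\alpha_1\alpha_2}\,(R_2\circ T_1).
\]
Since $T_1$, being averaged, is nonexpansive by Lemma~\ref{lemma:averaged_1}\,i), so is $R_2\circ T_1$; hence $N$ is a convex combination of two nonexpansive operators and therefore nonexpansive. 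As $\alpha_1\alpha_2 \in (0,1)$, this exhibits $T_2\circ T_1$ as averaged.

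These computations are routine; the only point that needs attention is that one must invoke the \emph{nonexpansiveness} of $T_1$ itself (not merely the existence of its averaging data) to conclude that $R_2\circ T_1$ is nonexpansive. I would also remark that the argument only yields the non-optimal averaging constant $\alpha_1\alpha_2$; extracting the sharp value $\alpha_1\alpha_2/\big(1-(1-\alpha_1)(1-\alpha_2)\big)$ is more involved and is most cleanly done via the quantitative characterization ``$T$ is $\alpha$-averaged if and only if $\|Tx-Ty\|_2^2 + \tfrac{\alpha}{1-\alpha}\|(I-T)x-(I-T)y\|_2^2 \le \|x-y\|_2^2$ for all $x,y$'', chaining the resulting inequalities for $T_1$ and $T_2$ along the identity $(I-T_2T_1)x-(I-T_2T_1)y = \big[(I-T_1)x-(I-T_1)y\big]+\big[(I-T_2)T_1x-(I-T_2)T_1y\big]$ and balancing the two squared terms; but this refinement is not needed for the statement as given.
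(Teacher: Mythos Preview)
Your proof is correct and follows essentially the same route as the paper: in both parts you write down the same candidate operator ($\tilde R$ in i), $N$ in ii)) as a convex combination of $I$, $R$, and $R_2\circ T_1$, and verify nonexpansiveness via the triangle inequality, arriving at the averaging parameter $\alpha_1\alpha_2$ for the composition. Your added remark on the sharp constant and the point about needing the nonexpansiveness of $T_1$ itself are accurate and go slightly beyond what the paper spells out.
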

%--------------------------------------------------------------------
%
\begin{proof} 
i) By assumption, $T=\alpha I + (1-\alpha) R$ with $R$ nonexpansive. We have
\[
 T = \tilde{\alpha} I + \big( (\alpha-\tilde{\alpha}) I + (1-\alpha) R \big)
= \tilde{\alpha} I + (1- \tilde \alpha) 
\underbrace{\left( \frac{\alpha-\tilde{\alpha}}{1-\tilde \alpha} I + \frac{1-\alpha}{1-\tilde \alpha} R\right)}_{\tilde R}
\]
and for all $x,y \in \mathbb R^d$ it holds that
\[
 \|\tilde{R}(x)  -\tilde{R}(y)\|_2 \leq \frac{\alpha-\tilde{\alpha}}{1-\tilde \alpha}\|x-y\|_2 +
\frac{1-\alpha}{1-\tilde \alpha}\|R(x)-R(y)\|_2
\leq \|x-y\|_2.
\]
So, $\tilde{R}$ is nonexpansive. 
\\
ii) By assumption there exist nonexpansive operators 
$R_1,R_2$ and $\alpha_1,\alpha_2 \in (0,1)$ such that 
\begin{eqnarray*}
T_{2}\left(T_{1} (x) \right)
&= & \alpha_{2} T_{1} (x) + \left(1-\alpha_{2}\right)R_{2} \left(T_{1} (x) \right)\\
&= & \alpha_{2} \left(\alpha_{1}x+\left(1-\alpha_{1}\right)R_{1}\left(x\right)\right)
+
\left(1-\alpha_{2}\right)R_{2}\left(T_{1}\left(x\right)\right)\\
&= & 
\underbrace{\alpha_{2}\alpha_{1}}_{:=\alpha}x + (\alpha_{2}-\underbrace{\alpha_{2}\alpha_{1}}_{=\alpha}) R_{1}\left(x\right)
+
\left(1-\alpha_{2}\right)R_{2}\left(T_{1}\left(x\right)\right)\\
&= & \alpha x+\left(1-\alpha\right)\underbrace{\left(\frac{\alpha_{2}-\alpha}{1-\alpha}R_{1}\left(x\right)
+
\frac{1-\alpha_{2}}{1-\alpha}R_{2}\left( T_{1}\left(x\right)\right)\right)}_{=:R}
\end{eqnarray*}
The concatenation of two nonexpansive operators is nonexpansive. Finally, the convex combination of two
nonexpansive operators is nonexpansive so that $R$ is indeed nonexpansive. 
\end{proof}
An operator $T:\mathbb R^d \rightarrow \mathbb R^d$ is called {\it asymptotically regular}\index{operator! asymptotically regular}\index{asymptotically regular|see{operator}} 
if it holds for all $ x \in \mathbb R^d$ that
\[
 \left(T^{r+1} x - T^{r} x \right) \rightarrow 0 \quad \mbox{for \quad $r\rightarrow +\infty$}. 
\]
Note that this property does not imply convergence, even boundedness cannot be guaranteed. 
As an example consider the partial sums of a harmonic sequence.
%In \cite[Theorem 5]{BP66}, the following theorem was shown for operators on uniformly convex Banach spaces 
%which of course includes $\mathbb R^d$ with the Euclidian norm.
%
%--------------------------------------------------------------------------------------------
\begin{theorem}[Asymptotic Regularity of Averaged Operators] \label{av_asym_reg} % {\rm (Asymptotic Regularity of Averaged Operators)}

Let $T: \mathbb R^d \rightarrow \mathbb R^d$ be an averaged operator 
with respect to the nonexpansive mapping $R$ and the parameter $\alpha \in (0,1)$. 
Assume that ${\rm Fix}(T)\neq \emptyset$. 
Then, $T$ is asymptotically regular.
\end{theorem}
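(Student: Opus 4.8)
The plan is to exploit the Fej\'er-monotonicity of the Picard iterates $x^{(r+1)} = T x^{(r)}$ with respect to $\mathrm{Fix}(T) = \mathrm{Fix}(R)$, and then to convert the resulting summability of the displacement terms into the asymptotic regularity statement. Fix any $x^* \in \mathrm{Fix}(T)$; this is where the hypothesis $\mathrm{Fix}(T) \neq \emptyset$ enters. Since $T$ is nonexpansive (Lemma~\ref{lemma:averaged_1}i)) and $T x^* = x^*$, the sequence $\|x^{(r)} - x^*\|_2$ is nonincreasing, hence convergent; in particular $(x^{(r)})_{r}$ is bounded.

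The key quantitative step is a sharpened nonexpansiveness estimate for averaged operators. Writing $T = \alpha I + (1-\alpha) R$, a direct expansion of $\|T x - T y\|_2^2$ using the identity $\|\alpha a + (1-\alpha) b\|_2^2 = \alpha \|a\|_2^2 + (1-\alpha)\|b\|_2^2 - \alpha(1-\alpha)\|a-b\|_2^2$ with $a = x-y$, $b = Rx - Ry$ gives
\begin{equation} \label{eq:av_ineq}
\|Tx - Ty\|_2^2 \le \|x-y\|_2^2 - \frac{1-\alpha}{\alpha}\, \|(I-T)x - (I-T)y\|_2^2,
\end{equation}
where I have used $(I-T)x - (I-T)y = (1-\alpha)\big((x-y) - (Rx-Ry)\big)$ together with the nonexpansiveness of $R$ to bound $\|b-a\|_2^2$ from below. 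Applying \eqref{eq:av_ineq} with $x = x^{(r)}$ and $y = x^*$ yields
$$
\|x^{(r+1)} - x^*\|_2^2 \le \|x^{(r)} - x^*\|_2^2 - \frac{1-\alpha}{\alpha}\, \|x^{(r+1)} - x^{(r)}\|_2^2 .
$$
Summing this telescoping inequality over $r = 0, 1, \dots, N$ and using that the left-hand terms are nonnegative gives $\frac{1-\alpha}{\alpha} \sum_{r=0}^{\infty} \|x^{(r+1)} - x^{(r)}\|_2^2 \le \|x^{(0)} - x^*\|_2^2 < \infty$, so the series converges and therefore $\|x^{(r+1)} - x^{(r)}\|_2 = \|T^{r+1} x^{(0)} - T^r x^{(0)}\|_2 \to 0$. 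Since $x^{(0)}$ was arbitrary, $T$ is asymptotically regular.

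The main obstacle is purely bookkeeping: getting the constant in \eqref{eq:av_ineq} right, i.e.\ correctly relating $\|(I-T)x - (I-T)y\|_2$ to the cross term $\|(x-y) - (Rx - Ry)\|_2$ and invoking nonexpansiveness of $R$ in the right direction so that a genuinely negative summand (not merely a nonpositive one) appears. Once \eqref{eq:av_ineq} is in hand, the telescoping argument is routine. An alternative, if one wishes to avoid the explicit constant, is to quote Lemma~\ref{lemma:averaged_1}iii): the averaged operator with parameter $\alpha$ can be rewritten so that $2T - I$ (or an appropriate rescaling) is nonexpansive, reducing \eqref{eq:av_ineq} to the firmly nonexpansive inequality \eqref{fne_1}; but the self-contained expansion above is shorter.
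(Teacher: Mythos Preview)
Your argument is correct in substance and takes a genuinely different route from the paper. One minor bookkeeping slip: with the paper's convention $T=\alpha I+(1-\alpha)R$ one has $(I-T)x-(I-T)y=(1-\alpha)\big((x-y)-(Rx-Ry)\big)$, so the constant in \eqref{eq:av_ineq} comes out as $\frac{\alpha}{1-\alpha}$ rather than $\frac{1-\alpha}{\alpha}$. This does not affect anything, since the only property used downstream is that the constant is strictly positive for $\alpha\in(0,1)$; you even flagged this as the ``bookkeeping obstacle'' yourself.

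The paper argues quite differently. It fixes $\hat x\in{\rm Fix}(T)$, notes that $\|x^{(r)}-\hat x\|_2\searrow d$, and then proceeds by contradiction: if $\|x^{(r+1)}-x^{(r)}\|_2\not\to 0$, extract a subsequence with $x^{(r_{l_j})}\to a$ on the sphere $S(\hat x,d)$ and $R(x^{(r_{l_j})})\to b$ in the closed ball $B(\hat x,d)$ with $a\neq b$; then $T(x^{(r_{l_j})})\to c=\alpha a+(1-\alpha)b$, and strict convexity of $\|\cdot\|_2^2$ forces $\|c-\hat x\|_2<d$, contradicting that $c$ must also lie on $S(\hat x,d)$. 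Your approach is more quantitative and shorter: the sharpened inequality \eqref{eq:av_ineq} yields the stronger conclusion $\sum_r\|x^{(r+1)}-x^{(r)}\|_2^2<\infty$ (square-summability, not merely convergence to zero), and it avoids compactness and subsequence extraction entirely. The paper's geometric argument, on the other hand, makes the role of strict convexity of the Hilbert norm transparent and uses only the qualitative fact that $R$ is nonexpansive, without the algebraic identity for $\|\alpha a+(1-\alpha)b\|_2^2$.
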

%--------------------------------------------------------------------------------------------
%
\begin{proof} 
Let $\hat{x}\in {\rm Fix}(T)$ and  $x^{(r)}=T^r x^{(0)}$ for some starting element $x^{(0)}$. 
Since $T$ is nonexpansive, i.e., $\|x^{(r+1)}-\hat{x}\|_2 \leq \|x^{(r)}-\hat{x}\|_2$ we obtain
\begin{equation} \label{ass_0}
\lim_{r\rightarrow \infty} \|x^{(r)}- \hat{x}\|_2 = d \geq 0. 
\end{equation}
Using ${\rm Fix} (T) = {\rm Fix} (R)$ it follows 
\begin{equation} \label{ass}
\lim_{r \rightarrow \infty} \sup\|R(x^{(r)}) - \hat x \|_2 
= 
\lim_{r \rightarrow \infty} \sup \|R(x^{(r)}) - R(\hat x ) \|_2 
\le \lim_{r \rightarrow \infty}\|x^{(r)} -\hat x \|_2 = d. 
\end{equation}
Assume that $\|x^{(r+1)} - x^{(r)}\|_2 \not \rightarrow 0$ for $r \rightarrow \infty$.
Then, there exists a subsequence $( x^{(r_l)})_{l \in \mathbb N}$ such that
$$
\|x^{(r_l+1)} - x^{(r_l)}\|_2 \ge \varepsilon
$$
for some $\varepsilon >0$.
By \eqref{ass_0} the sequence $(x^{(r_l)})_{l \in \mathbb N}$ is bounded. 
Hence there exists a convergent subsequence $(x^{(r_{l_j})})$ such that
$$
\lim_{j \rightarrow \infty} x^{(r_{l_j})} = a,
$$
where $a \in S(\hat x,d) := \{ x \in \mathbb R^d: \|x- \hat x\|_2 = d\}$ by \eqref{ass_0}.
On the other hand, we have by the continuity of $R$ and \eqref{ass} that
$$
\lim_{j \rightarrow \infty} R(x^{(r_{l_j})}) = b, \quad b \in B(\hat x,d).
$$
Since
$
\varepsilon \le \|x^{(r_{l_j}+1)} - x^{(r_{l_j})} \|_2 = \|(\alpha - 1) x^{(r_{l_j})} + (1- \alpha) R(x^{(r_{l_j})}) \|_2
$
we conclude by taking the limit $j \rightarrow \infty$ that $a \not = b$.
By the continuity of $T$ and \eqref{ass_0} we obtain 
$$
\lim_{j \rightarrow \infty} T(x^{(r_{l_j})}) = c, \quad c \in S(\hat x,d).
$$
However, by the strict convexity of $\| \cdot\|_2^2$ this yields the contradiction
\begin{eqnarray*}
\|c - \hat x \|_2^2 
&=&
\lim_{j \rightarrow \infty} \| T(x^{(r_{l_j})}) - \hat x\|_2^2 
\; = \;
\lim_{j \rightarrow \infty} \| \alpha (x^{(r_{l_j})}  - \hat x) + (1-\alpha) (R(x^{(r_{l_j})})  - \hat x)\|_2^2
\\
&=&
\| \alpha (a - \hat x) + (1- \alpha) (b - \hat x) \|_2^2
< \alpha \|a - \hat x\|_2^2+ (1- \alpha) \|b - \hat x \|_2^2 \\
&\le& d^2.
\end{eqnarray*}
\end{proof}
The following theorem was first proved for operators on Hilbert spaces by Opial \cite[Theorem 1]{Op67} based on results in \cite{BP66},
where convergence must be replaced by weak  convergence in general Hilbert spaces.
A shorter proof can be found in the appendix of \cite{DDM04}. For finite dimensional spaces the proof simplifies as follows.
%
%-----------------------------------------------------------
\begin{theorem}[Opial's Convergence Theorem] \label{conv_exact} % {\rm (Opial's Convergence Theorem)}

 Let  $T:\mathbb R^d\rightarrow \mathbb R^d$ fulfill the following conditions:
  ${\rm Fix}(T)\neq \emptyset$,
  $T$ is nonexpansive and asymptotically regular.
 Then, for every $x^{(0)} \in \mathbb R^d$, the sequence of Picard iterates $(x^{(r)})_{r \in \mathbb N}$ 
generated by 
$x^{(r+1)} = T x^{(r)}$ 
converges to an element of ${\rm Fix}(T)$.
\end{theorem}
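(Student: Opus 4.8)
The plan is to prove Opial's theorem in finite dimensions by combining boundedness of the Picard sequence with asymptotic regularity and a standard argument showing that every accumulation point is a fixed point. First I would fix $\hat x \in {\rm Fix}(T)$, which exists by hypothesis, and use nonexpansiveness to observe that $\|x^{(r)} - \hat x\|_2$ is nonincreasing in $r$; hence the sequence $(x^{(r)})_{r \in \N}$ is bounded and, in particular, has at least one convergent subsequence.

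Next I would show that every accumulation point of $(x^{(r)})_{r \in \N}$ lies in ${\rm Fix}(T)$. Suppose $x^{(r_l)} \to a$ for some subsequence. By asymptotic regularity, $T x^{(r_l)} - x^{(r_l)} = x^{(r_l+1)} - x^{(r_l)} \to 0$, and by continuity of the nonexpansive map $T$ we have $T x^{(r_l)} \to T a$. Combining these two facts gives $T a = a$, so $a \in {\rm Fix}(T)$.

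The main obstacle — the genuinely Opial-flavored part — is upgrading ``some accumulation point is a fixed point'' to ``the whole sequence converges.'' For this I would argue that there is only one accumulation point. Suppose $a$ and $b$ are both accumulation points; by the previous step both lie in ${\rm Fix}(T)$. Applying the monotonicity established in the first step with $\hat x = a$ shows that $\lim_{r\to\infty} \|x^{(r)} - a\|_2$ exists; call it $d_a$, and similarly $\lim_{r\to\infty}\|x^{(r)} - b\|_2 = d_b$ exists. Passing to a subsequence converging to $a$ yields $d_a = 0$ but also, along that same subsequence, $d_b = \|a - b\|_2$; passing to a subsequence converging to $b$ yields $d_b = 0$ and $d_a = \|a-b\|_2$. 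Hence $\|a-b\|_2 = d_a = d_b = 0$, so $a = b$.

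Finally, a bounded sequence in $\R^d$ with a unique accumulation point converges to that point; since that point is a fixed point of $T$ by the second step, the Picard iterates converge to an element of ${\rm Fix}(T)$, completing the proof. The only subtlety worth stating carefully is that the existence of $\lim_r \|x^{(r)} - \hat x\|_2$ for \emph{every} $\hat x \in {\rm Fix}(T)$ (not just the one fixed at the outset) is exactly the ``Fej\'er monotonicity'' ingredient that makes the uniqueness-of-accumulation-point argument work; everything else is elementary compactness in finite dimensions.
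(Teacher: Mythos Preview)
Your proof is correct and follows the same overall structure as the paper: boundedness via nonexpansiveness, accumulation points are fixed points via asymptotic regularity plus continuity, then upgrade to full convergence. The one genuine difference is in that last step. The paper is more direct: having found \emph{one} subsequential limit $\tilde x \in {\rm Fix}(T)$, it simply notes that $\|x^{(r)} - \tilde x\|_2$ is nonincreasing in $r$ and equals zero along the subsequence, hence the full limit is zero and the whole sequence converges to $\tilde x$. You instead run the standard Fej\'er-monotonicity argument comparing two hypothetical accumulation points $a,b$ and concluding $a=b$. Both are valid; the paper's shortcut is a little cleaner in finite dimensions, while your version is the form that carries over verbatim to the weak-convergence setting in infinite-dimensional Hilbert spaces (where one really does need Opial's property and the two-accumulation-point comparison).
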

%------------------------------------------------------------
%
\begin{proof}
Since $T$ is nonexpansive, we have
for any 
$\hat{x}\in {\rm Fix}(T)$ 
and any 
$x^{(0)}\in \mathbb R^d$ 
that
$$
\|T^{r+1} x^{(0)} -\hat{x}\|_2 \leq \|T^{r} x^{(0)}-\hat{x}\|_2.
$$ 
Hence $(T^r x^{(0)})_{r \in \mathbb N}$ is bounded and
there exists a subsequence $( T^{r_l} x^{(0)} )_{l\in \mathbb N} = (x^{(r_l)})_{l\in \mathbb N}$ 
which converges to some $\tilde x$.
If we can show that $\tilde x \in {\rm Fix}(T)$ 
we are done because in this case
$$\|T^{r} x^{(0)}-\tilde x\|_2 \leq \|T^{r_l} x^{(0)} - \tilde x\|_2, \quad r\geq r_l$$ 
and thus the whole sequence converges to $\tilde x$. 

Since $T$ is asymptotically regular it follows that 
$$(T-I)(T^{r_l}x^{(0)}) = T^{r_l+1} x^{(0)} - T^{r_l} x^{(0)} \rightarrow 0$$ 
and since $(T^{r_l}x^{(0)})_{l \in \mathbb N}$ converges to $\tilde x$ and $T$ is continuous 
we get that $(T-I)(\tilde x)=0$, i.e., $\tilde x\in {\rm Fix}(T)$. 
\end{proof}

Combining the above Theorems \ref{av_asym_reg} and \ref{conv_exact} we obtain the following main result.
%
%------------------------------------------------------------
\begin{theorem}[Convergence of Averaged Operator Iterations] \label{thm:ernte} % {\rm (Convergence of Averaged Operator Iterations)}
 Let  $T:\mathbb R^d \rightarrow \mathbb R^d$ be an averaged operator such that ${\rm Fix}(T)\neq \emptyset$. 
 Then, for every $x^{(0)}\in \mathbb R^d$, the  sequence $(T^r x^{(0)})_{r \in \mathbb N}$ converges to a fixed point of $T$.
\end{theorem}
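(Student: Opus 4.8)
The plan is to combine the two preceding theorems directly, since Theorem~\ref{thm:ernte} is exactly their conjunction once we verify that the hypotheses of each one are met. First I would observe that an averaged operator $T = \alpha I + (1-\alpha)R$ with $R$ nonexpansive is itself nonexpansive by Lemma~\ref{lemma:averaged_1}i), so one of the two hypotheses needed for Opial's Convergence Theorem (Theorem~\ref{conv_exact}) is immediate. The remaining hypothesis of that theorem, asymptotic regularity of $T$, is supplied by Theorem~\ref{av_asym_reg}: indeed, that theorem states precisely that an averaged operator with nonempty fixed point set is asymptotically regular, and ${\rm Fix}(T) \neq \emptyset$ holds by assumption.

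Next I would assemble these pieces. Given the assumption ${\rm Fix}(T) \neq \emptyset$, Theorem~\ref{av_asym_reg} yields that $T$ is asymptotically regular. Since $T$ is also nonexpansive (as just noted) and ${\rm Fix}(T) \neq \emptyset$, all three hypotheses of Theorem~\ref{conv_exact} are satisfied. Hence for every $x^{(0)} \in \mathbb R^d$, the Picard sequence $x^{(r+1)} = T x^{(r)}$, i.e.\ the sequence $(T^r x^{(0)})_{r \in \mathbb N}$, converges to an element of ${\rm Fix}(T)$, which is the claim.

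There is essentially no obstacle here: the theorem is a packaging result, and the entire work has already been done in proving Theorems~\ref{av_asym_reg} and~\ref{conv_exact}, together with the elementary Lemma~\ref{lemma:averaged_1}. The only thing worth a sentence of care is the bookkeeping that the ``Picard iterates generated by $x^{(r+1)} = Tx^{(r)}$'' in the statement of Theorem~\ref{conv_exact} are literally the iterates $T^r x^{(0)}$ appearing in the statement of Theorem~\ref{thm:ernte}, with $x^{(0)}$ the arbitrary starting element — a triviality, but it is the glue that makes the citation chain valid. If anything, one might remark that nonexpansiveness of $T$ need not be separately assumed precisely because ``averaged'' already encodes it via Lemma~\ref{lemma:averaged_1}i), so that the two invoked theorems interlock without redundancy.

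\begin{proof}
Since $T$ is averaged, it is nonexpansive by Lemma~\ref{lemma:averaged_1}i). By assumption ${\rm Fix}(T) \neq \emptyset$, so Theorem~\ref{av_asym_reg} implies that $T$ is asymptotically regular. Thus $T$ is nonexpansive, asymptotically regular, and has a nonempty fixed point set, so the hypotheses of Theorem~\ref{conv_exact} are fulfilled. Consequently, for every $x^{(0)} \in \mathbb R^d$ the sequence $(T^r x^{(0)})_{r \in \mathbb N}$ converges to a fixed point of $T$.
\end{proof}
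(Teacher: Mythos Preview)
Your proposal is correct and matches the paper's approach exactly: the paper presents Theorem~\ref{thm:ernte} with no separate proof environment, simply stating that it follows by combining Theorems~\ref{av_asym_reg} and~\ref{conv_exact}. Your write-up makes explicit the one small link the paper leaves implicit, namely that averaged operators are nonexpansive via Lemma~\ref{lemma:averaged_1}i), and otherwise follows the same citation chain.
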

%
%-------------------------------------------------------------------------------------------------
\section{Proximal Algorithms} \label{sec:prox_algs}
%--------------------------------------------------------------------------------------------------
%
%-------------------------------------------------------------------------------------------------
\subsection{Proximal Point Algorithm} \label{subsec:prox_point}
%-------------------------------------------------------------------------------------------------
By Theorem \ref{prox_1} iii) the minimizer of a function $f \in \Gamma_0(\R^d)$, which we suppose to exist,
is characterized by the
fixed point equation
$$
\hat x = \prox_{\lambda f} (\hat x).
$$
The corresponding Picard iteration gives rise to the following {\it proximal point algorithm}\index{proximal point algorithm} %(see Algorithm \ref{a:PP})
which dates back to \cite{Ma70,Ro76}. Since $\prox_{\lambda f}$ is firmly nonexpansive by Lemma \ref{fne_prox} and thus averaged, the algorithm converges
by Theorem \ref{thm:ernte} for any initial value $x^{(0)} \in \R^d$ to a minimizer of $f$ if there exits one.

\begin{algorithm}[H]
\caption{Proximal Point Algorithm (PPA) \label{a:PP}}
\begin{algorithmic}
\STATE \textbf{Initialization:} $x^{(0)} \in \R^d$, $\lambda >0$
\STATE \textbf{Iterations:} For $r = 0,1,\ldots$
\STATE
$	
x^{(r+1)} = \prox_{\lambda f} (x^{(r)}) = \argmin_{x \in \R^d} \left\{ \frac{1}{2\lambda} \| x^{(r)} - x\|_2^2 + f(x) \right\}
$
\end{algorithmic}
\end{algorithm}
The PPA can be generalized for the sum 
$\sum_{i=1}^n f_i$ of functions 
$f_i \in \Gamma_0(\R^d)$, $i=1,\ldots,n$.
Popular generalizations are the so-called cyclic PPA \cite{Ber11} 
and the parallel PPA \cite{CoPe11}.

%-------------------------------------------------------------------------------------------------
\subsection{Proximal Gradient Algorithm} \label{sec:FBalg}
%-------------------------------------------------------------------------------------------------
We are interested in minimizing functions of the form $f = g+h$, where
$g: \R^d \rightarrow \R$ is convex, differentiable with Lipschitz
continuous gradient and Lipschitz constant $L$, i.e.,
\begin{equation} \label{split}
\|\nabla g(x) -   \nabla g(y)\|_2 \le L \|x-y\|_2 \quad \forall x,y \in \R^d,
\end{equation}
and $h \in \Gamma_0(\R^d)$.
Note that the Lipschitz condition on $\nabla g$ implies
\begin{equation} \label{lip_grad_surr}
g(x) \le g(y) + \langle \nabla g(y), x-y \rangle + \frac{L}{2} \|x-y\|_2^2 \quad \forall x,y \in \R ^d,
\end{equation}
see, e.g., \cite{OR70}.
We want to solve
\begin{equation} \label{basic_split}
\argmin_{x \in \R^d} \{g(x) + h(x) \}.
\end{equation}
By Fermat's rule and subdifferential calculus we know that $\hat x$ is a minimizer of \eqref{basic_split} if and only if
\begin{align}
0                              &\in \nabla g(\hat x) + \partial h(\hat x), \nonumber\\
\hat x - \eta \nabla g(\hat x) & \in \hat x + \eta \partial h(\hat x), \nonumber\\
\hat x &= (I + \eta \partial h)^{-1} \left(\hat x - \eta \nabla g(\hat x) \right) 
       = \prox_{\eta h} \left(\hat x - \eta \nabla g(\hat x) \right). \label{path_fbs}
\end{align}
This is a fixed point equation for the minimizer $\hat x$ of $f$. 
The corresponding Picard iteration is known
as {\it proximal gradient algorithm}\index{proximal gradient algorithm}  
or as {\it proximal forward-backward splitting}\index{proximal forward-backward splitting}.
%For an interpretation of the proximal gradient algorithm as forward-backward integration 
%of the gradient flow or as maximizing-minimizing method 
%we refer to \cite{PB2013}.
%
\begin{algorithm}[H]
\caption{Proximal Gradient Algorithm (FBS) \label{a:FBS}}
\begin{algorithmic}
\STATE \textbf{Initialization:} $x^{(0)} \in \R^d$, $\eta \in (0,2/L)$
\STATE \textbf{Iterations:} For $r = 0,1,\ldots$
\STATE
$
x^{(r+1)} = \prox_{\eta h} \left( x^{(r)} - \eta \nabla g( x^{(r)}) \right)
$
\end{algorithmic}
\end{algorithm}
In the special case when $h := \iota_C$ is the indicator function of a non-empty, closed, convex set $C \subset \R^d$, 
the above algorithm for finding
$$
\argmin_{x \in C} g(x)
$$
becomes the gradient descent re-projection algorithm.
\begin{algorithm}[H]
\caption{Gradient Descent Re-Projection Algorithm \label{a:grad_repro}}
\begin{algorithmic}
\STATE \textbf{Initialization:} $x^{(0)} \in \R^d$, $\eta \in (0,2/L)$
\STATE \textbf{Iterations:} For $r = 0,1,\ldots$
\STATE
$
x^{(r+1)} = \Pi_C \left( x^{(r)} - \eta \nabla g( x^{(r)}) \right)
$
\end{algorithmic}
\end{algorithm}
It is also possible to use flexible variables  $\eta_r \in (0,\frac{2}{L})$ in the proximal gradient algorithm.
For further details, modifications and extensions see also \cite[Chapter 12]{FP03_2}.
The convergence of the algorithm follows by the next theorem.
%
%----------------------------------------------------------------------------------------
\begin{theorem}[Convergence of Proximal Gradient Algorithm] \label{thm:FBS} % {\rm (Convergence of Proximal Gradient Algorithm))}\\
Let $g: \R^d \rightarrow \R$ be a convex, differentiable function on $\R^d$ with Lipschitz
continuous gradient and Lipschitz constant $L$ and  $h \in \Gamma_0(\R^d)$.
Suppose that a solution of \eqref{basic_split} exists. 
Then, for every initial point $x^{(0)}$ and $\eta \in (0,\frac{2}{L})$, 
the  sequence $\{ x^{(r)} \}_r$
generated by the proximal gradient algorithm
converges to a solution of \eqref{basic_split}. 
\end{theorem}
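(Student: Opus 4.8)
The plan is to recast the FBS iteration as the Picard iteration of a single averaged operator and then apply Theorem~\ref{thm:ernte}. Set
$$
T := \prox_{\eta h} \circ (I - \eta \nabla g).
$$
The chain of equivalences leading to \eqref{path_fbs} shows that $\hat x$ solves \eqref{basic_split} \emph{if and only if} $\hat x = T\hat x$. Hence the standing hypothesis that a solution exists is exactly the assumption ${\rm Fix}(T)\neq\emptyset$ needed in Theorem~\ref{thm:ernte}, and any fixed point produced by the iteration is automatically a minimizer of $g+h$. So everything reduces to proving that $T$ is averaged.

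Since $T$ is a composition, by Lemma~\ref{better_alpha}~ii) it suffices to show that each factor is averaged. The proximal factor is free: $\prox_{\eta h}$ is firmly nonexpansive by Lemma~\ref{fne_prox}, hence averaged with $\alpha=\tfrac12$ by Lemma~\ref{lemma:averaged_1}~iii). The real content is in the forward factor $G := I - \eta\nabla g$. To handle it I would first establish that $\nabla g$ is $\tfrac1L$-cocoercive, i.e.
$$
\langle \nabla g(x) - \nabla g(y),\, x - y\rangle \ \ge\ \tfrac1L\,\|\nabla g(x) - \nabla g(y)\|_2^2 \qquad \forall\, x,y\in\R^d.
$$
This is the Baillon--Haddad theorem and the one place where an actual argument is required: apply the descent inequality \eqref{lip_grad_surr} to the shifted convex function $x\mapsto g(x)-\langle\nabla g(y),x\rangle$ (which has $L$-Lipschitz gradient and is minimized at $y$), minimize the resulting quadratic upper bound over $x$, then symmetrize in $x$ and $y$ and add the two inequalities.

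Cocoercivity says precisely that $F := \tfrac1L\nabla g$ satisfies the first inequality in \eqref{fne_1}, so $F$ is firmly nonexpansive and, by Lemma~\ref{lemma:averaged_1}~iii), $F = \tfrac12 I + \tfrac12 R_0$ with $R_0 := 2F-I$ nonexpansive. Writing $\beta := \tfrac{\eta L}{2}\in(0,1)$ (using $\eta\in(0,2/L)$) one gets
$$
G \ =\ I - \eta\nabla g \ =\ I - 2\beta F \ =\ (1-\beta) I + \beta(-R_0),
$$
and since $-R_0$ is nonexpansive this exhibits $G$ as averaged with parameter $1-\beta\in(0,1)$. Then $T=\prox_{\eta h}\circ G$ is averaged by Lemma~\ref{better_alpha}~ii), and Theorem~\ref{thm:ernte} gives convergence of $(T^r x^{(0)})_r$ to some $\hat x\in{\rm Fix}(T)$, which is a solution of \eqref{basic_split}. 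The main obstacle is the cocoercivity step; everything else is bookkeeping with the averagedness lemmas. One could instead cite Baillon--Haddad directly, or avoid averagedness of $G$ by proving $G$ nonexpansive for $\eta\le 2/L$ together with a descent-energy estimate, but the averaged-operator route is cleanest given the machinery already assembled.
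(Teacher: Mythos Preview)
Your proposal is correct and follows essentially the same route as the paper: show $T=\prox_{\eta h}\circ(I-\eta\nabla g)$ is averaged by combining firm nonexpansiveness of $\prox_{\eta h}$ (Lemma~\ref{fne_prox}) with averagedness of $I-\eta\nabla g$ obtained from Baillon--Haddad, then invoke Theorem~\ref{thm:ernte}. The paper simply cites Baillon--Haddad from \cite{BC11} rather than sketching the cocoercivity argument, but the structure and the key computation $I-\eta\nabla g=(1-\tfrac{\eta L}{2})I+\tfrac{\eta L}{2}(-R)$ are identical.
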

%----------------------------------------------------------------------------------------
%
\begin{proof} 
We show that $\prox_{\eta h}(I-\eta \nabla g)$ is averaged. 
Then we are done by Theorem \ref{thm:ernte}.
By Lemma \ref{fne_prox} we know that $\prox_{\eta h}$ is firmly nonexpansive.
By the Baillon-Haddad Theorem \cite[Corollary 16.1]{BC11} the function $\frac{1}{L} \nabla g$ 
is also firmly nonexpansive, i.e., it is  averaged with parameter $\frac{1}{2}$. This means that 
there exists a nonexpansive mapping $R$ such that
$\frac{1}{L} \nabla g = \frac{1}{2} (I+R)$ which implies
\[
I-\eta \nabla g = I-\tfrac{\eta L}{2}(I+R) = (1-\tfrac{\eta L}{2})I + \tfrac{\eta L}{2}(-R).
\]
Thus, for $\eta \in (0,\frac{2}{L})$, the operator $I-\eta \nabla g$ is averaged.
Since the concatenation of two averaged operators is averaged again we obtain the assertion.
\end{proof}
Under the above conditions a linear convergence rate can be achieved in the sense that
$$f(x^{(r)}) - f(\hat x) = {\cal O} \left(1/r \right),$$
see, e.g., \cite{BT09,CR97}.
%
%----------------------------------------------------------------------
\begin{example} {\rm
For solving  
$$\argmin_{x \in \R^d} \big\{ \underbrace{\frac12 \|Kx - b\|_2^2}_{g} + \underbrace{\lambda \|x\|_1}_{h} \big\}$$
we compute $\nabla g (x) =  K^\tT(Kx -b)$ and use that the proximal operator of the $\ell_1$-norm is just the
componentwise soft-shrinkage. Then the proximal gradient algorithm becomes
$$
x^{(r+1)} = \prox_{\lambda \eta \|\cdot\|_1} \left(x^{(r)} - \eta K^\tT(K x^{(r)} - b) \right)
= S_{\eta \lambda} \left(x^{(r)} - \eta K^\tT(K x^{(r)} - b) \right).
$$ 
This algorithm is known as {\it iterative soft-thresholding algorithm} (ISTA)\index{iterative soft-thresholding algorithm (ISTA)}
and was developed and analyzed through various techniques by many researchers. For a general Hilbert space approach, see, e.g.,
\cite{DDM04}.
}
\end{example}
%----------------------------------------------------------------------
%
The FBS algorithm has been recently extended to the case of non-convex functions 
in \cite{AB08,ABS2011,BST2013,CPR2013,OCBP2013}.
The convergence analysis mainly rely on the assumption that the objective function $f = g + h$
satisfies the Kurdyka-Lojasiewicz inequality which is indeed fulfilled
for a wide class of functions as $\log-\exp$, semi-algebraic and subanalytic functions which are of interest in image processing.
%
%-------------------------------------------------------------------------------------------------
\subsection{Accelerated Algorithms}
%-------------------------------------------------------------------------------------------------
For large scale problems as those arising in image processing a major concern is to find efficient algorithms
solving the problem in a reasonable time.
While each FBS step has low computational complexity, it may suffer from slow linear convergence \cite{CR97}.
Using a simple extrapolation idea with appropriate parameters $\tau_r$, the convergence can often be accelerated:
\begin{align}
y^{(r)} &=  x^{(r)} + \tau_r \left( x^{(r)} - x^{(r-1)} \right), \nonumber\\
x^{(r+1)} &= \prox_{\eta h} \left( y^{(r)} - \eta \nabla g( y^{(r)}) \right). \label{xx}
\end{align}
By the next Theorem \ref{thm:fFBS} we will see that $\tau_r = \frac{r-1}{r+2}$
appears to be a good choice. 
Clearly, we can vary $\eta$ in each step again.
Choosing $\theta_r$ such that
$\tau_r = \frac{\theta_r(1-\theta_{r-1})}{\theta_{r-1}}$, e.g., $\theta_r = \frac{2}{r+2}$ 
for the above choice of $\tau_r$, the algorithm can be rewritten as follows:
\begin{algorithm}[H]
\caption{Fast Proximal Gradient Algorithm \label{a:FFBS}}
\begin{algorithmic}
\STATE \textbf{Initialization:} $x^{(0)} = z^{(0)} \in \R^d$, $\eta \in (0,1/L)$, $\theta_r = \frac{2}{r+2}$
\STATE \textbf{Iterations:} For $r = 0,1,\ldots$
\STATE
$
\begin{array}{lcl}
y^{(r)} &=& (1-\theta_r) x^{(r)} + \theta_r z^{(r)}\\
x^{(r+1)} &=& \prox_{\eta h} \left( y^{(r)} - \eta \nabla g( y^{(r)}) \right)\\
z^{(r+1)} &=& x^{(r)} + \frac{1}{\theta_r} \left( x^{(r+1)}  - x^{(r)} \right)
\end{array}
$
\end{algorithmic}
\end{algorithm}
By the following standard theorem the extrapolation modification of the FBS algorithm ensures a quadratic convergence rate
see also Nemirovsky and Yudin \cite{NY83}.

%which is 'optimal' for first order methods of smooth problems in the sense of Nemirovsky and Yudin \cite{NY83}.
%
%----------------------------------------------------------------------
\begin{theorem} \label{thm:fFBS} 
Let $f = g+h$, where $g: \R^d \rightarrow \R$ is a convex, Lipschitz differentiable function with Lipschitz
constant $L$ and $h \in \Gamma_0(\R^d)$. Assume that $f$ has a minimizer $\hat x$.
Then the fast proximal gradient algorithm fulfills
$$f(x^{(r)}) - f(\hat x) = {\cal O} \left(1/r^2\right).$$
\end{theorem}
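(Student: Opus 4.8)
The plan is to follow the classical estimate sequence argument of Nesterov/Beck--Teboulle adapted to the form of Algorithm~\ref{a:FFBS}. The basic ingredient is the descent inequality coming from the proximal gradient step: for any $y$ and $x^{+} := \prox_{\eta h}(y - \eta \nabla g(y))$, combining the Lipschitz surrogate \eqref{lip_grad_surr} with $\eta \le 1/L$ and the variational characterization \eqref{basic_problem_1} of the prox step applied to the function $\eta h$, one obtains for every $z \in \R^d$ the key inequality
\[
f(x^{+}) \le f(z) + \frac{1}{\eta}\langle y - x^{+}, z - y\rangle + \frac{1}{2\eta}\|y - x^{+}\|_2^2
= f(z) + \frac{1}{2\eta}\big( \|z-y\|_2^2 - \|z - x^{+}\|_2^2 \big).
\]
This is the workhorse; I would establish it first as a lemma-style computation, using convexity of $g$ (to pass from $g(x^{+}) \le g(y) + \langle\nabla g(y), x^{+}-y\rangle + \tfrac{L}{2}\|x^{+}-y\|_2^2 \le g(z) + \langle \nabla g(y), x^{+}-z\rangle + \tfrac{1}{2\eta}\|x^{+}-y\|_2^2$) and the subgradient inequality for $h$ at $x^{+}$ with subgradient $\tfrac{1}{\eta}(y - x^{+} - \eta\nabla g(y)) \in \partial h(x^{+})$.

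Next I would apply this inequality at step $r$ twice with $x^{+} = x^{(r+1)}$ and $y = y^{(r)}$: once with $z = x^{(r)}$ and once with $z = \hat x$. Writing $\delta_r := f(x^{(r)}) - f(\hat x) \ge 0$, taking the convex combination of the two inequalities with weights $1-\theta_r$ and $\theta_r$, and using $y^{(r)} = (1-\theta_r)x^{(r)} + \theta_r z^{(r)}$ together with the update $z^{(r+1)} = x^{(r)} + \tfrac{1}{\theta_r}(x^{(r+1)} - x^{(r)})$ to rewrite $y^{(r)} - x^{(r+1)} = \theta_r(z^{(r)} - z^{(r+1)})$ and $\hat x - y^{(r)}$ accordingly, the cross terms telescope and one arrives at a relation of the shape
\[
\frac{1}{\theta_r^2}\delta_{r+1} + \frac{1}{2\eta}\|z^{(r+1)} - \hat x\|_2^2
\le \frac{1-\theta_r}{\theta_r^2}\delta_r + \frac{1}{2\eta}\|z^{(r)} - \hat x\|_2^2.
\]
The choice $\theta_r = \tfrac{2}{r+2}$ is exactly what makes $\tfrac{1-\theta_r}{\theta_r^2} \le \tfrac{1}{\theta_{r-1}^2}$, so defining the Lyapunov quantity $E_r := \tfrac{1}{\theta_{r-1}^2}\delta_r + \tfrac{1}{2\eta}\|z^{(r)} - \hat x\|_2^2$ gives $E_{r+1} \le E_r$, hence $\tfrac{1}{\theta_{r-1}^2}\delta_r \le E_r \le E_0 = \delta_0 + \tfrac{1}{2\eta}\|x^{(0)} - \hat x\|_2^2$ (using $x^{(0)} = z^{(0)}$ and handling the $r=0$ index with $\theta_{-1}:=1$). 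Since $\tfrac{1}{\theta_{r-1}^2} = \tfrac{(r+1)^2}{4}$, this yields $\delta_r \le \tfrac{4}{(r+1)^2}\big(\delta_0 + \tfrac{1}{2\eta}\|x^{(0)}-\hat x\|_2^2\big) = {\cal O}(1/r^2)$.

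The step I expect to require the most care is the algebraic bookkeeping in the convex-combination/telescoping step: one must verify that the three occurrences of norm-squared and inner-product terms generated by the two instances of the key inequality recombine, via the identity $\langle a, b\rangle = \tfrac12(\|a\|_2^2 + \|b\|_2^2 - \|a-b\|_2^2)$ and the definitions of $y^{(r)}$ and $z^{(r+1)}$, precisely into the clean two-term recursion above with no residual error terms — this is where the specific coupling between $\tau_r$, $\theta_r$, and the $z$-update is used, and getting the indices on $\theta$ consistent (especially the initialization) is the only genuinely delicate point. Everything else is a direct consequence of \eqref{lip_grad_surr}, convexity, and Theorem~\ref{prox_1}ii).
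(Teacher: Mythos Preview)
Your proposal is correct and follows essentially the same route as the paper: establish the one-step descent inequality from \eqref{lip_grad_surr} and Theorem~\ref{prox_1}ii), take the $\theta_r$-weighted convex combination at $z=\hat x$ and $z=x^{(r)}$, use the identities linking $y^{(r)},x^{(r+1)},z^{(r)},z^{(r+1)}$ to obtain the recursion $\tfrac{1}{\theta_r^2}\delta_{r+1}+\tfrac{1}{2\eta}\|z^{(r+1)}-\hat x\|_2^2\le \tfrac{1-\theta_r}{\theta_r^2}\delta_r+\tfrac{1}{2\eta}\|z^{(r)}-\hat x\|_2^2$, and telescope via $(1-\theta_r)/\theta_r^2\le 1/\theta_{r-1}^2$. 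One minor slip: in your displayed key inequality the inner product should be $\langle y-x^{+},x^{+}-z\rangle$, not $\langle y-x^{+},z-y\rangle$ (your clean form $\tfrac{1}{2\eta}(\|z-y\|_2^2-\|z-x^{+}\|_2^2)$ is nonetheless correct); also, the paper uses $\theta_0=1$ to kill the $\delta_0$ term outright, giving the slightly sharper bound $f(x^{(r+1)})-f(\hat x)\le \tfrac{2}{\eta(r+2)^2}\|x^{(0)}-\hat x\|_2^2$.
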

%----------------------------------------------------------------------
%
\begin{proof}
First we consider the progress in one step of the algorithm.
By the Lipschitz differentiability of $g$ in \eqref{lip_grad_surr} and since $\eta < \frac{1}{L}$ we know that
\begin{equation} \label{important_1}
g(x^{(r+1)}) \le g(y^{(r)}) + \langle \nabla g(y^{(r)}), x^{(r+1)}-y^{(r)} \rangle + \frac{1}{2\eta} \|x^{(r+1)}-y^{(r)}\|_2^2
\end{equation}
and by the variational characterization of the proximal operator in Theorem \ref{prox_1}ii) for all $u \in \R^d$ that 
\begin{align}
h(x^{(r+1)}) 
&\le 
h(u) + \frac{1}{\eta} \langle y^{(r)} - \eta \nabla g(y^{(r)}) - x^{(r+1)}, x^{(r+1)}-u \rangle \nonumber\\
&\le 
h(u) - \langle \nabla g(y^{(r)}), x^{(r+1)}-u \rangle  +  \frac{1}{\eta} \langle y^{(r)} - x^{(r+1)}, x^{(r+1)} - u\rangle.
\label{important_2}
\end{align}
Adding the main inequalities \eqref{important_1} and \eqref{important_2} and using the convexity of $g$ yields 
\begin{align*}
f(x^{(r+1)}) 
&\le 
f(u) \underbrace{- g(u) +  g(y^{(r)}) + \langle \nabla g(y^{(r)}), u - y^{(r)} \rangle}_{\le 0} \\
&+ \frac{1}{2\eta} \|x^{(r+1)}-y^{(r)}\|_2^2
+  \frac{1}{\eta} \langle y^{(r)} - x^{(r+1)}, x^{(r+1)} - u\rangle\\
&\le 
f(u)+ \frac{1}{2\eta} \|x^{(r+1)}-y^{(r)}\|_2^2
+  \frac{1}{\eta} \langle y^{(r)} - x^{(r+1)}, x^{(r+1)} - u\rangle.
\end{align*}
Combining these inequalities for $u := \hat x$  and $u := x^{(r)}$ with $\theta_r \in [0,1]$ gives
\begin{align*}
&\theta_r \left( f(x^{(r+1)}) - f(\hat x) \right) + (1-\theta_r) \left( f(x^{(r+1)}) - f(x^{(r)}) \right)\\
&= f(x^{(r+1)}) -  f(\hat x) + (1-\theta_r)\left(f(\hat x) -  f(x^{(r)}) \right)\\
&\le 
\frac{1}{2\eta} \| x^{(r+1)}-y^{(r)} \|_2^2 + 
\frac{1}{\eta} \langle y^{(r)}-x^{(r+1)}, x^{(r+1)} - \theta_r \hat x - (1-\theta_r) x^{(r)} \rangle\\
&=
\frac{1}{2\eta} \left( 
\| y^{(r)} -  \theta_r \hat x - (1-\theta_r) x^{(r)} \|_2^2  - \| x^{(r+1)} -  \theta_r \hat x - (1-\theta_r) x^{(r)}\|_2^2
\right)\\
&=
\frac{\theta_r^2}{2\eta} \left( \|z^{(r)} - \hat x\|_2^2 - \|z^{(r+1)} - \hat x\|_2^2 \right).
\end{align*}
Thus, we obtain for a single step
$$
\frac{\eta}{\theta_r^2} \left(f(x^{(r+1)} ) - f(\hat x) \right) + \frac12 \|z^{(r+1)} - \hat x\|_2^2
\le \frac{\eta(1-\theta_r)}{\theta_r^2} \left(f(x^{(r)} - f(\hat x) \right)+ \frac12 \|z^{(r)} - \hat x\|_2^2.
$$
Using the relation recursively on the right-hand side and regarding that 
$\frac{(1-\theta_r)}{\theta_r^2} \le \frac{1}{\theta_{r-1}^2}$
we obtain
$$
\frac{\eta}{\theta_r^2} \left(f(x^{(r+1)}) - f(\hat x) \right) \le  \frac{\eta(1-\theta_0)}{\theta_0^2} \left(f(x^{(0)}) - f(\hat x) \right)
+ \frac12 \|z^{(0)} - \hat x\|_2^2
=\frac12 \|x^{(0)} - \hat x\|_2^2.
$$
This yields the assertion
$$
f(x^{(r+1)} )- f(\hat x) \le \frac{2}{\eta (r+2)^2}\|x^{(0)} - \hat x\|_2^2.
$$
\end{proof}
There exist many variants and generalizations of the above algorithm as 
\begin{itemize}
\item[-]
{\it Nesterov's algorithms}\index{Nesterov algorithm} \cite{Ne04,Ne83}, see also  \cite{DHJJ09,Ts08};
this includes approximation algorithms for nonsmooth $g$ \cite{BBC2011,Ne05} as NESTA\index{NESTA},
\item[-] {\it fast iterative shrinkage algorithms} (FISTA)\index{FISTA} by Beck and Teboulle \cite{BT09},
\item[-] {\it variable metric strategies}\index{variable metric strategies} \cite{BGLS95,BQ99,CV2013,PLS08}, where based on \eqref{precond} step \eqref{xx} is replaced by
\begin{equation}
\label{eq:VarMetricStrategy}
x^{(r+1)} = \prox_{Q_r,\eta_r h} \left( y^{(r)} - \eta_r Q_r^{-1}\nabla g( y^{(r)}) \right)
\end{equation}
with symmetric, positive definite matrices $Q_r$.
\end{itemize}
Line search strategies can be incorporated \cite{GS2011,Gu92,Ne07}.
Finally we mention Barzilei-Borwein step size rules \cite{BB88} based on a Quasi-Newton approach and relatives, see \cite{FZZ08} for an overview
and the cyclic proximal gradient algorithm related to the cyclic Richardson algorithm \cite{SSM2013}.
%
%-------------------------------------------------------------------------------------------------
\section{Primal-Dual Methods} \label{sec:p_d_methods}
%--------------------------------------------------------------------------------------------------
%
\subsection{Basic Relations} \label{subsec:bas_pd}
The following minimization algorithms closely rely on the primal-dual formulation of problems.
We consider functions $f = g  + h(A\,\cdot)$, where $g \in \Gamma_0(\R^d)$, $h \in \Gamma_0(\R^m)$, 
and $A \in \R^{m,d}$, and ask for the solution of the primal problem
\begin{equation} \label{problem_pdhg}
(P) \quad \argmin_{x \in \mathbb R^d} \left\{ g(x) + h(Ax) \right\},
\end{equation}
that can be rewritten as
\begin{equation} \label{problem_pdhg_1}
(P) \quad \argmin_{x \in \mathbb R^d, y \in \mathbb R^m} \left\{ g(x) + h(y)  \quad {\rm s.t.} \quad A x = y \right\}.
\end{equation}
The {\it Lagrangian}\index{Lagrangian} of \eqref{problem_pdhg_1} is given by
\begin{equation} \label{Lagrangian}
L(x,y,p) := g(x) + h(y) + \langle p, Ax-y \rangle
\end{equation}
and the {\it augmented Lagrangian}\index{augmented Lagrangian} by
\begin{align}
L_\gamma(x,y,p) &:= g(x) + h(y) + \langle p, Ax - y \rangle + \frac{\gamma}{2} \|Ax - y\|_2^2, \quad \gamma > 0, \nonumber \\
&= g(x) + h(y) +  \frac{\gamma}{2} \|Ax - y + \frac{p}{\gamma}\|_2^2 - \frac{1}{2\gamma} \|p\|_2^2. \label{Lagrangian_augmented}
\end{align}
Based on the Lagrangian \eqref{Lagrangian}, the primal and dual problem can be written as
\begin{align}
(P) &\quad \argmin_{x\in \mathbb R^d ,y\in \mathbb R^m} \sup_{p \in \mathbb R^m} \left\{g(x) + h(y) + \langle p, Ax-y \rangle \right\},
\label{L_primal}\\
(D) &\quad \argmax_{p\in \mathbb R^m} \inf_{x\mathbb \in R^d,y \in \mathbb R^m} \left\{g(x) + h(y) + \langle p, Ax-y \rangle \right\}.
\label{L_dual}
\end{align}
Since 
$$\min_{y\in \mathbb R^m}  \{h(y) - \langle p,y \rangle\} = -\max_{y \in \mathbb R^m}  \{ \langle p,y\rangle -  h(y)\} = -h^*(p)$$
and in \eqref{problem_pdhg} further
$$
h(Ax) = \max_{p \in \mathbb R^m} \{\langle p,Ax \rangle - h^*(p) \},
$$
the primal and dual problem can be rewritten as
\begin{align*}
(P) &\quad \argmin_{x \in \mathbb R^d} \sup_{p \in \mathbb R^m} \left\{g(x) - h^*(p)   + \langle p, Ax \rangle \right\},\\
(D) &\quad \argmax_{p \in \mathbb R^m} \inf_{x \in \mathbb R^d} \left\{g(x)  - h^*(p) + \langle p, Ax \rangle \right\}.
\end{align*}
If the infimum exists, the dual problem can be seen as Fenchel dual problem
\begin{equation} \label{fenchel_D}
 (D) \quad \argmin_{p \in \mathbb R^m}  \left\{g^*(- A^\tT p)  + h^*(p)  \right\}.
\end{equation}

Recall that $((\hat x, \hat y), \hat p) \in \R^{dm,m}$ is a {\it saddle point}\index{saddle point} of the Lagrangian $L$ in \eqref{Lagrangian} if
$$
L((x,y),\hat p) \le L((\hat x, \hat y), \hat p) \le L((\hat x, \hat y), p) \qquad \forall (x,y) \in \R^{dm}, \, p \in \R^{m}.
$$
If $((\hat x, \hat y), \hat p) \in \R^{dm,m}$ is a saddle point of $L$, then $(\hat x, \hat y)$ is a solution of the primal
problem \eqref{L_primal} and $\hat p$ is a solution of the dual problem \eqref{L_dual}.
The converse is also true. However the existence of a solution of the primal problem $(\hat x, \hat y) \in \R^{dm}$
does only imply under additional qualification constraint that there exists $\hat p$ such that
$((\hat x, \hat y), \hat p) \in \R^{dm,m}$ is a saddle point of $L$.
%
%------------------------------------------------------------------------------
\subsection{Alternating Direction Method of Multipliers} \label{subsec:admm}
%------------------------------------------------------------------------------
%
Based on the Lagrangian formulation \eqref{L_primal} and \eqref{L_dual}, a first idea to solve the optimization problem
would be to alternate the minimization of the Lagrangian with respect to $(x,y)$ and to apply a gradient ascent
approach with respect respect to $p$. This is known as general Uzawa method \cite{AHU58}.
More precisely, noting that
for differentiable $\nu (p) := \inf_{x,y} L(x,y,p) = L(\tilde x, \tilde y, p)$ we have
$\nabla \nu (p) = A\tilde x - \tilde y$, the algorithm reads
\begin{align}
(x^{(r+1)}, y^{(r+1)}) &\in \argmin_{x \in \R^d,y \in \R^m} L(x,y,p^{(r)}), \label{sep}\\
p^{(r+1)} &= p^{(r)} + \gamma (A x^{(r+1)} - y^{(r+1)}), \qquad \gamma > 0. \nonumber
\end{align} 
Linear convergence can be proved under certain conditions (strict convexity of $f$) \cite{GL89}.
The assumptions on $f$ to ensure convergence of the algorithm can be relaxed by replacing
the Lagrangian by the augmented Lagrangian $L_\gamma$ \eqref{Lagrangian_augmented} with fixed parameter $\gamma$:
\begin{align}
(x^{(r+1)}, y^{(r+1)}) &\in \argmin_{x \in \R^d,y \in \R^m} L_\gamma(x,y,p^{(r)}), \label{ALM}\\
p^{(r+1)} &= p^{(r)} + \gamma (A x^{(r+1)} - y^{(r+1)}), \qquad \gamma > 0. \nonumber
\end{align}
This augmented Lagrangian  method is known as {\it method of multipliers}\index{method of multipliers} \cite{He69,Po72,Ro76}.
It can be shown \cite[Theorem 3.4.7]{BI00}, \cite{Be82} that the sequence $(p^{(r)})_r$ generated by the algorithm coincides with
the proximal point algorithm applied to $-\nu(p)$, i.e.,
$$
p^{(r+1)} = \prox_{-\gamma \nu} \left( p^{(r)} \right).
$$
The improved convergence properties came at a cost. While the minimization with respect to $x$ and $y$
can be separately computed in \eqref{sep} using 
$
\langle p^{(r)}, (A | -I) \begin{pmatrix}x\\y \end{pmatrix} \rangle 
= 
\langle \begin{pmatrix}A^\tT\\-I \end{pmatrix} p^{(r)}, \begin{pmatrix}x\\y \end{pmatrix} \rangle 
$,
this is no longer possible for the augmented Lagrangian.
A remedy is to alternate the minimization with respect to $x$ and $y$ which leads to
\begin{align}
x^{(r+1)} &\in \argmin_{x \in \R^d} L_\gamma(x,y^{(r)},p^{(r)}), \label{first_iter}\\
y^{(r+1)} &= \argmin_{y \in \R^m} L_\gamma(x^{(r+1)},y,p^{(r)}), \label{second_iter}\\
p^{(r+1)} &= p^{(r)} + \gamma (A x^{(r+1)} - y^{(r+1)}). \nonumber
\end{align}
This is the {\it alternating direction method of multipliers (ADMM)}\index{alternating direction method of multipliers}
which dates back to \cite{Ga83,GM76,GM75}.
\begin{algorithm}[H]
\caption{Alternating Direction Method of Multipliers (ADMM) \label{alg:ADMM}}
\begin{algorithmic}
\STATE \textbf{Initialization:} $y^{(0)}\in \mathbb R^m$, $p^{(0)} \in \mathbb R^m$
\STATE \textbf{Iterations:} For $r = 0,1,\ldots$
\STATE
$
\begin{array}{lcl}
x^{(r+1)}&\in& \argmin_{x \in \mathbb R^d} \left\{ g(x) +  \frac{\gamma}{2} \|\frac{1}{\gamma} p^{(r)} + Ax - y^{(r)}\|_2^2  \right\}\\
y^{(r+1)}&=& \argmin_{y \in \mathbb R^m} \left\{ h(y) + \frac{\gamma}{2} \|\frac{1}{\gamma} p^{(r)} + Ax^{(r+1)} - y\|_2^2\right\} = \prox_{\frac{1}{\gamma}h}(\frac{1}{\gamma} p^{(r)} + Ax^{(r+1)})\\
p^{(r+1)}&=& p^{(r)} + \gamma(A x^{(r+1)} -  y^{(r+1)} ) 
\end{array}
$
\end{algorithmic}
\end{algorithm}
Setting
$b^{(r)} := p^{(r)}/\gamma$
we obtain the following (scaled) ADMM:
\begin{algorithm}[H]
\caption{Alternating Direction Method of Multipliers (scaled ADMM) \label{alg:ADMMscaled}}
\begin{algorithmic}
\STATE \textbf{Initialization:} $y^{(0)}\in \mathbb R^m$, $b^{(0)} \in \mathbb R^m$
\STATE \textbf{Iterations:} For $r = 0,1,\ldots$
\STATE
%\begin{subequation}
$
\begin{array}{lcl}
x^{(r+1)}&\in& \argmin_{x \in \mathbb R^d} \left\{ g(x) +  \frac{\gamma}{2} \|b^{(r)} + Ax - y^{(r)}\|_2^2  \right\} \\
y^{(r+1)}&=& \argmin_{y \in \mathbb R^m} \left\{ h(y) + \frac{\gamma}{2} \|b^{(r)} + Ax^{(r+1)} - y\|_2^2\right\} = \prox_{\frac{1}{\gamma} h}(b^{(r)} + Ax^{(r+1)})\\
b^{(r+1)}&=& b^{(r)} + A x^{(r+1)} -  y^{(r+1)}
\end{array}
$
%\end{subequation}
\end{algorithmic}
\end{algorithm}
A good overview on the ADMM algorithm and its applications is given in \cite{BPCPE11}, where in particular the
important issue of choosing the parameter $\gamma >0$ is addressed.
The ADMM can be considered for more general problems
\begin{equation} \label{general_admm_problem}
\argmin_{x \in \mathbb R^d, y \in \mathbb R^m} \left\{ g(x) + h(y)  \quad {\rm s.t.} \quad A x + By = c\right\}.
\end{equation}
Convergence of the ADMM under various assumptions was proved, e.g., in \cite{GM76,HY98,LM79,Ts91}.
We will see that for our problem \eqref{problem_pdhg_1} the convergence follows by the relation of the ADMM to the so-called
Douglas-Rachford splitting algorithm which convergence can be shown using averaged operators.
Few bounds on the global convergence rate of the algorithm can be found in \cite{EB90} (linear convergence for linear programs depending on a variety of quantities),
\cite{HL2012} (linear convergence for sufficiently small step size) and
on the local behaviour of a specific variation of the ADMM during the course of iteration for quadratic programs in \cite{Bol2014}.
\begin{theorem} [Convergence of ADMM]\label{thm:convADMM}
Let $g \in \Gamma_0(\R^d)$, $h \in \Gamma_0(\R^m)$ 
and $A \in \R^{m,d}$. Assume that the Lagrangian \eqref{Lagrangian} has a saddle point.
Then,  for $r\rightarrow \infty$, the sequence $\gamma \left( b^{(r)} \right)_r$ converges 
to a solution of the dual problem.
%result in Diss Simon Thm 2.4.9
If in addition the first step \eqref{first_iter} in the ADMM algorithm has a unique solution, then
$ \left( x^{(r)} \right)_r$ converges to a solution of the primal problem.
\end{theorem}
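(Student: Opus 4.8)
The plan is to prove the theorem via the classical reduction of the ADMM to the Douglas--Rachford splitting algorithm applied to the Fenchel dual \eqref{fenchel_D}, and then to invoke the convergence result for averaged operators, Theorem \ref{thm:ernte}. Concretely, with $\tilde g(p) := g^*(-A^\tT p)$ and $\tilde h(p) := h^*(p)$ the dual problem \eqref{fenchel_D} reads $\min_{p\in\R^m}\{\tilde g(p) + \tilde h(p)\}$, and I would first verify by eliminating $x^{(r+1)}$ and $y^{(r+1)}$ from the first-order optimality conditions of the three updates in Algorithm \ref{alg:ADMMscaled} that the rescaled sequence $\gamma b^{(r)}$ obeys, up to a fixed shift of index, the Douglas--Rachford recursion $z^{(r+1)} = T z^{(r)}$ for $\tilde g,\tilde h$ with parameter $\gamma$. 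Here one uses $(\gamma\phi)^* = \gamma\,\phi^*(\cdot/\gamma)$ and Moreau's decomposition (Theorem \ref{prox_2}) to relate the proximal steps on $g,h$ to proximal/reflection steps on $\tilde g,\tilde h$.

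Next I would establish that the Douglas--Rachford operator
$$
T := \tfrac12\Bigl( I + (2\prox_{\gamma \tilde h} - I)\circ(2\prox_{\gamma \tilde g} - I)\Bigr)
$$
is averaged. Indeed, by Lemma \ref{fne_prox} each proximal operator is firmly nonexpansive, hence each reflection $2\prox - I$ is nonexpansive; the composition of two nonexpansive maps is nonexpansive, so $T$ is the $\tfrac12$-average of a nonexpansive map and therefore averaged by Lemma \ref{lemma:averaged_1}iii). The set $\mathrm{Fix}(T)$ is nonempty exactly because the Lagrangian \eqref{Lagrangian} admits a saddle point: such a saddle point provides a dual optimum $\hat p$ with $0 \in \partial\tilde g(\hat p) + \partial\tilde h(\hat p)$, and the usual splitting identity turns this inclusion into an element of $\mathrm{Fix}(T)$. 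Theorem \ref{thm:ernte} then yields convergence of $(z^{(r)})_r$, hence of $(\gamma b^{(r)})_r$, to a fixed point of $T$, from which one reads off a minimizer $\hat p$ of \eqref{fenchel_D}, i.e.\ a solution of the dual problem. This proves the first assertion.

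For the primal statement I would combine strong duality with the uniqueness hypothesis on \eqref{first_iter}. Asymptotic regularity of the averaged operator $T$ (Theorem \ref{av_asym_reg}) gives $\gamma(b^{(r+1)} - b^{(r)}) \to 0$, i.e.\ the constraint residual $Ax^{(r+1)} - y^{(r+1)} \to 0$; together with the standard ADMM estimate that the objective values $g(x^{(r)}) + h(y^{(r)})$ converge to the optimal value, every cluster point $(\hat x,\hat y)$ of $(x^{(r)},y^{(r)})$ is feasible for \eqref{problem_pdhg_1} and attains the optimum by lower semicontinuity, hence is a primal solution. If \eqref{first_iter} has a unique minimizer, a closedness argument (using convergence of the dual iterates $\gamma b^{(r)}$) pins down a single such $\hat x$ and forces the whole sequence $(x^{(r)})_r$ to converge to it.

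The main obstacle is the bookkeeping in the ADMM--Douglas--Rachford equivalence: correctly identifying which proximal step corresponds to $\tilde g$ and which to $\tilde h$, keeping track of the index shift between the ADMM iterates and the Douglas--Rachford auxiliary variable, and juggling the conjugation identities with the $-A^\tT$ appearing inside $g^*$. A secondary delicate point is passing from dual to primal convergence: only feasibility in the limit and optimality of the objective values come for free, so boundedness of $(x^{(r)})_r$ and the extraction of a limit genuinely rely on the uniqueness assumption on the $x$-update (or, alternatively, on a coercivity/strict-convexity argument), which is why that hypothesis cannot simply be dropped.
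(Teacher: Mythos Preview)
Your approach is essentially the one the paper takes: the paper does not give a self-contained proof right after the statement but instead remarks that convergence ``follows by the relation of the ADMM to the so-called Douglas--Rachford splitting algorithm'' and then supplies exactly the two ingredients you outline, namely Theorem~\ref{DRS} (the DRS operator is firmly nonexpansive, hence averaged, so Theorem~\ref{thm:ernte} applies) and Theorem~\ref{DRS_equiv} (the explicit identification of the ADMM iterates with the DRS iterates on the dual). One small correction to your bookkeeping: in the paper's identification \eqref{identi} it is $t^{(r)} = \gamma(b^{(r)} + y^{(r)})$ that plays the role of the Picard iterate of the DR operator, while $\gamma b^{(r)} = p^{(r)} = \prox_{\gamma h^*}(t^{(r)})$; convergence of $\gamma b^{(r)}$ to a dual solution then follows by continuity of $\prox_{\gamma h^*}$, not because $\gamma b^{(r)}$ itself satisfies the Picard recursion.
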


There exist different modifications of the ADMM algorithm presented above:
\begin{itemize}
	\item[-] {\it inexact computation} of the first step
\eqref{first_iter} \cite{CT94,EB92} such that it might be handled by an iterative method,
	\item[-] {\it variable parameter and metric strategies} \cite{BPCPE11,He2002,HY98,He2000,Kontogiorgis1998} where the fixed parameter $\gamma$ 
can vary in each step, or the quadratic term $(\gamma / 2) \|Ax - y\|_2^2$ within the augmented Lagrangian \eqref{Lagrangian_augmented} is replaced 
by the more general proximal operator based on \eqref{precond} such that the ADMM updates \eqref{first_iter} and \eqref{second_iter} receive the form
	\begin{eqnarray*}
		x^{(r+1)}&\in& \argmin_{x \in \mathbb R^d} \left\{ g(x) +  \frac{1}{2} \|b^{(r)} + Ax - y^{(r)}\|_{Q_r}^2  \right\}, \\
y^{(r+1)}&=& \argmin_{y \in \mathbb R^m} \left\{ h(y) + \frac{1}{2} \|b^{(r)} + Ax^{(r+1)} - y\|_{Q_r}^2\right\},
	\end{eqnarray*}
respectively, with symmetric, positive definite matrices $Q_r$. The variable parameter strategies might mitigate the performance dependency on the 
initial chosen fixed parameter \cite{BPCPE11,He2000,Kontogiorgis1998,Wang2001} and include monotone conditions \cite{HY98,Kontogiorgis1998} or more flexible non-monotone rules \cite{BPCPE11,He2002,He2000}.
% Since the efficiency of ADMM is dependent on setting of $\gamma$ and no a-priori knowledge exist how to set this parameter properly,
\end{itemize}
\paragraph{ADMM from the Perspective of Variational Inequalities} 
%----------------------------------------------------------------------------------------
The ADMM algorithm presented above from the perspective of Lagrangian functions has been also studied extensively 
in the area of {\it variational inequalities (VIs)}\index{Variational Inequality}, see, e.g., \cite{Ga83,He2002,Ts91}. 
A VI problem consists of finding for a mapping $F: \R^l \rightarrow \R^l$ a vector $\hat{z} \in \R^l$ such that
\begin{equation} \label{VI}
\left\langle z - \hat{z}, F(\hat{z})\right\rangle \geq 0, \qquad \forall z \in \R^l .
\end{equation}
In the following, we consider the minimization problem \eqref{problem_pdhg_1}, i.e.,
\begin{equation*}
\argmin_{x \in \mathbb R^d, y \in \mathbb R^m} \left\{ g(x) + h(y)  \quad {\rm s.t.} \quad A x = y \right\}, 
\end{equation*}
for $g \in \Gamma_0(\R^d)$, $h \in \Gamma_0(\R^m)$.
The discussion can be extended to the more general problem \eqref{general_admm_problem} \cite{He2002,Ts91}. 
Considering the Lagrangian \eqref{Lagrangian} and its optimality conditions, solving \eqref{problem_pdhg_1} 
is equivalent to find a triple $\hat{z} = ((\hat{x},\hat{y}),\hat{p}) \in \R^{dm,m}$ such that \eqref{VI} holds with
\[
z = \begin{pmatrix} x\\y\\p \end{pmatrix}, \qquad F(z) = \begin{pmatrix}\partial g(x) + A^\tT p \\ \partial h(y) - p \\ Ax - y \end{pmatrix} ,
\]
where $\partial g$ and $\partial h$ have to be understood as any element of the corresponding subdifferential for simplicity. 
Note that $\partial g$ and $\partial h$ 
are maximal monotone operators \cite{BC11}.
A VI problem of this form can be solved by ADMM as proposed by Gabay \cite{Ga83} and Gabay and Mercier \cite{GM76}: for a given triple $(x^{(r)},y^{(r)},p^{(r)})$ 
generate new iterates $(x^{(r+1)},y^{(r+1)},p^{(r+1)})$ by
\begin{itemize}
	\item[i)] find $x^{(r+1)}$ such that
		\begin{equation} \label{VI:x}
		\langle x - x^{(r+1)}, \partial g(x^{(r+1)}) + A^\tT (p^{(r)} + \gamma(Ax^{(r+1)} - y^{(r)})) \rangle \geq 0, \quad \forall x \in \R^d,
		\end{equation}
	\item[ii)] find $y^{(r+1)}$ such that
		\begin{equation} \label{VI:y}
		\langle y - y^{(r+1)}, \partial h(y^{(r+1)}) - (p^{(r)} + \gamma(Ax^{(r+1)} - y^{(r+1)})) \rangle \geq 0, \quad \forall y \in \R^m,
		\end{equation}
		\item[iii)] update $p^{(r+1)}$ via
		\[ p^{(r+1)} = p^{(r)} + \gamma(Ax^{(r+1)} - y^{(r+1)}), \]
\end{itemize}
where $\gamma > 0$ is a fixed penalty parameter. To corroborate the equivalence of the iteration scheme 
above to ADMM in Algorithm \ref{alg:ADMM}, note that \eqref{VI} reduces to $\left\langle \hat{z}, F(\hat{z})\right\rangle \geq 0$ for $z = 2 \hat{z}$. 
On the other hand, \eqref{VI} is equal to $\left\langle \hat{z}, F(\hat{z})\right\rangle \leq 0$ when $z = - \hat{z}$. The both cases transform \eqref{VI} 
to find a solution $\hat{z}$ of a system of equations $F(\hat{z}) = 0$. Thus, the VI sub-problems \eqref{VI:x} and \eqref{VI:y} can be reduced to find a pair $(x^{(r+1)},y^{(r+1)})$ with
\begin{eqnarray*}
	\partial g(x^{(r+1)}) + A^\tT (p^{(r)} + \gamma(Ax^{(r+1)} - y^{(r)})) & = & 0, \\
	\partial h(y^{(r+1)}) - (p^{(r)} + \gamma(Ax^{(r+1)} - y^{(r+1)})) & = & 0 .
\end{eqnarray*}
%By setting $b^{(r)} := p^{(r)} / \gamma$,
The both equations correspond to optimality conditions of the minimization sub-problems \eqref{first_iter} 
and \eqref{second_iter} of the ADMM algorithm, respectively. The theoretical properties of ADMM from the perspective of VI problems were studied extensively 
and a good reference overview can be found in \cite{He2002}.
%To reproduce the relation, we consider the primal minimization problem \eqref{problem_pdhg_1} 
%for simplicity but the consideration can be extended to the more general problem pointed out in \eqref{general_admm_problem}. 
%By taking into account the augmented Lagrangian \eqref{AL}, solving \eqref{problem_pdhg_1} is equivalent a variational inequality problem
% Analysis of ADMM from perspective of VI problems.
%
%
\paragraph{Relation to Douglas-Rachford Splitting} 
Finally we want to point out the relation of the ADMM to the Douglas-Rachford splitting algorithm
applied to the dual problem, see \cite{EB92,Es09,Ga83,Se11}. We consider again the problem \eqref{basic_split}, i.e.,
$$
\argmin_{x \in \R^d} \{g(x) + h(x) \},
$$
where we assume this time only $g,h \in \Gamma_0(\R^d)$ and that $f$ or $g$ is continuous at a point in $\dom g \cap \dom h$.
Fermat's rule and subdifferential calculus implies that $\hat x$ is a minimizer if and only if
\begin{equation} %\label{derivation_drs3}
0\in \partial g (\hat{x}) + \partial h (\hat{x}) 
\quad \Leftrightarrow \quad 
\exists \hat \xi \in \eta \partial g (\hat x) \; \mbox{such that} \; \hat x =  \prox_{\eta h} (\hat x - \hat \xi) \label{derivation_drs3}.
\end{equation}
The basic idea for finding  such minimizer is to set up 
a 'nice' operator  $T:\mathbb R^d \rightarrow \mathbb R^d$ by
\begin{equation} \label{op_drs}
T := \prox_{\eta h}(2 \prox_{\eta g} - I) - \prox_{\eta g} + I,
\end{equation}
which fixed points $\hat t$ are related to the minimizers as follows:
setting $\hat x := \prox_{\eta g}(\hat t)$, i.e.,
$\hat t \in \hat x + \eta \partial g (\hat x)$ and 
$\hat \xi := \hat t - \hat x \in \eta \partial g(\hat x)$
we see that
\begin{eqnarray*}
\hat t &=& T(\hat t) \; = \; \prox_{\eta h}(2 \hat x - \hat t)- \hat x + \hat t,\\
\hat \xi + \hat x &=& \prox_{\eta h} (\hat x - \hat \xi) + \hat \xi,\\
\hat x &=& \prox_{\eta h} (\hat x - \hat \xi),
\end{eqnarray*}
which coincides with \eqref{derivation_drs3}.
By the proof of the next theorem, the operator $T$ is firmly nonexpansive such that by Theorem \ref{thm:ernte}  
a fixed point of $T$ can be found by Picard iterations.
This gives rise to the 
following {\it Douglas-Rachford splitting algorithm (DRS)}.\index{algorithm!Douglas-Rachford splitting}\index{Douglas-Rachford splitting algorithm|see{algorithm}}  
%
%-----------------------------------------------------------------------
\begin{algorithm}[H]
\caption{Douglas-Rachford Splitting Algorithm (DRS)\label{a:DRS}}
\begin{algorithmic}
\STATE \textbf{Initialization:} $x^{(0)}, t^{(0)} \in \R^d$, $\eta > 0$
\STATE \textbf{Iterations:} For $r = 0,1,\ldots$
\STATE
$
\begin{array}{lcl}
t^{(r+1)}&=& \prox_{\eta h}(2 x^{(r)}-t^{(r)}) + t^{(r)} - x^{(r)}, \nonumber \\
 x^{(r+1)}&=& \prox_{\eta g}(t^{(r+1)}).
\end{array}
$
\end{algorithmic}
\end{algorithm}
%-----------------------------------------------------------------------
%
The following theorem verifies the convergence of the DRS algorithm. For a recent convergence result see also \cite{HY2012}.
%
%--------------------------------------------------------------------------------------
\begin{theorem}[Convergence of Douglas-Rachford Splitting Algorithm] \label{DRS} % {\rm  (Convergence of Douglas-Rachford Splitting)}\\
Let $g,h\in \Gamma_0(\R^d)$ where one of the functions is continuous at a point in $\dom g \cap \dom h$. 
Assume that a solution of $\argmin_{x \in \R^d} \{g(x) + h(x) \}$ exists.
Then, for any initial  $t^{(0)}, x^{(0)} \in \R^d$
and any $\eta > 0$, the DRS sequence $(t^{(r)})_{r}$ converges to a fixed point $\hat{t}$ of $T$ in \eqref{op_drs}
and
$(x^{(r)})_{r}$  to a solution of the minimization problem.
\end{theorem}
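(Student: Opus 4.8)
The plan is to recognize the operator $T$ in \eqref{op_drs} as firmly nonexpansive with nonempty fixed point set, and then to invoke Theorem \ref{thm:ernte}. The firm nonexpansiveness is cleanest via the \emph{reflection operators} $R_{\eta g} := 2\prox_{\eta g} - I$ and $R_{\eta h} := 2\prox_{\eta h} - I$. By Lemma \ref{fne_prox} the maps $\prox_{\eta g}$ and $\prox_{\eta h}$ are firmly nonexpansive, hence averaged with parameter $\tfrac12$ by Lemma \ref{lemma:averaged_1}iii); equivalently, $R_{\eta g}$ and $R_{\eta h}$ are nonexpansive. Substituting $\prox_{\eta g} = \tfrac12(I + R_{\eta g})$ and $\prox_{\eta h} = \tfrac12(I + R_{\eta h})$ into \eqref{op_drs} and simplifying gives
$$
T = \tfrac12\big( I + R_{\eta h} R_{\eta g}\big).
$$
Since a composition of two nonexpansive maps is nonexpansive, $R_{\eta h}R_{\eta g}$ is nonexpansive, so $T$ is averaged with $\alpha = \tfrac12$, i.e., firmly nonexpansive.

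Next I would check ${\rm Fix}(T)\neq\emptyset$. Let $\hat x$ minimize $g+h$, which exists by assumption; by Fermat's rule $0\in\partial(g+h)(\hat x)$. Here the continuity of one of $g,h$ at a point of $\dom g\cap\dom h$ is used to apply the subdifferential sum rule $\partial(g+h)(\hat x)=\partial g(\hat x)+\partial h(\hat x)$, yielding some $\hat\xi\in\eta\,\partial g(\hat x)$ with $-\hat\xi\in\eta\,\partial h(\hat x)$; as in \eqref{derivation_drs3} this means $\hat x=\prox_{\eta h}(\hat x-\hat\xi)$. Setting $\hat t:=\hat x+\hat\xi$ gives $\hat x=\prox_{\eta g}(\hat t)$, and the computation preceding Algorithm \ref{a:DRS} then shows $\hat t=T(\hat t)$, so ${\rm Fix}(T)\neq\emptyset$.

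Finally, since for $r\ge 1$ one has $x^{(r)}=\prox_{\eta g}(t^{(r)})$ and hence $t^{(r+1)}=T(t^{(r)})$, the sequence $(t^{(r)})_r$ is, from $r=1$ on, a Picard sequence of the averaged operator $T$ with nonempty fixed point set; Theorem \ref{thm:ernte} gives $t^{(r)}\to\hat t\in{\rm Fix}(T)$. Because $\prox_{\eta g}$ is continuous (being firmly nonexpansive), $x^{(r)}=\prox_{\eta g}(t^{(r)})\to\prox_{\eta g}(\hat t)=:\hat x$, and reading the chain leading to \eqref{derivation_drs3} backwards shows $\hat x$ minimizes $g+h$. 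The only delicate point is the nonemptiness of ${\rm Fix}(T)$: this is the sole place where the constraint qualification enters, namely to license the sum rule for subdifferentials; the rest is a direct consequence of the reflection-operator identity together with the already-established convergence theorem for averaged operator iterations.
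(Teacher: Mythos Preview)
Your proof is correct and follows essentially the same route as the paper: rewrite $T=\tfrac12(I+R_{\eta h}R_{\eta g})$ via the reflection operators, conclude firm nonexpansiveness, and invoke Theorem~\ref{thm:ernte}. You are in fact more complete than the paper's proof, which only spells out the reflection identity and leaves the verification of ${\rm Fix}(T)\neq\emptyset$ and the passage from $t^{(r)}\to\hat t$ to $x^{(r)}\to\hat x$ implicit in the discussion preceding the theorem.
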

%--------------------------------------------------------------------------------------
\begin{proof} It remains to show that $T$ is firmly nonexpansive.
We have for $R_1 := 2 \prox_{\eta g} -I$ and $R_2 := 2 \prox_{\eta h} -I$ that
\begin{eqnarray*}
 2 T &=& 2 \prox_{\eta h} (2 \prox_{\eta g}-I)  - (2 \prox_{\eta g} -I) + I 
\; = \; 
R_{2} \circ R_{1} + I,\\
T &=& \tfrac{1}{2} I + \tfrac{1}{2}R_{2} \circ R_{1}.
\end{eqnarray*}
The operators $R_{i}$, $i=1,2$ are nonexpansive since $\prox_{\eta g}$ and $\prox_{\eta h}$ are firmly nonexpansive. 
Hence $R_{2} \circ R_{1}$ is nonexpansive and we are done.
\end{proof}
%--------------------------------------------------------------------------------------
%
The relation of the ADMM algorithm and DRS algorithm applied to the Fenchel dual problem \eqref{fenchel_D}, i.e.,
\begin{equation} \label{drs_dual}
\begin{array}{lcl}
 t^{(r+1)}&=& \prox_{\eta g^*\circ (-A^\tT)}(2 p^{(r)}-t^{(r)}) + t^{(r)} - p^{(r)},  \\
 p^{(r+1)}&=& \prox_{\eta h^*}(t^{(r+1)}),
\end{array}
\end{equation}
is given by the following theorem, see \cite{EB92,Ga83}.
\begin{theorem}[Relation between ADMM and DRS] \label{DRS_equiv} % {\rm  (Relation between ADMM and DRS)}\\
The ADMM sequences $\left( b^{(r)} \right)_r$ and $\left( y^{(r)} \right)_r$ 
are related with the sequences \eqref{drs_dual} generated by the DRS algorithm applied to the dual problem
by $\eta = \gamma$ and
\begin{equation}\label{identi}
\begin{array}{lcl}
t^{(r)} &=& \eta (b^{(r)}+y^{(r)}), \\
p^{(r)} &=& \eta b^{(r)}.
\end{array} 
\end{equation}
\end{theorem}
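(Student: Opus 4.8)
The plan is to start from the ADMM iterations in Algorithm \ref{alg:ADMM} (unscaled form, with dual variable $p^{(r)} = \gamma b^{(r)}$), substitute the claimed identifications $t^{(r)} = \eta(b^{(r)} + y^{(r)})$, $p^{(r)} = \eta b^{(r)}$ with $\eta = \gamma$, and verify by induction that the resulting sequences satisfy exactly the DRS recursion \eqref{drs_dual} for the dual problem \eqref{fenchel_D}, i.e.\ for $g^* \circ (-A^\tT)$ in place of $g$ and $h^*$ in place of $h$. The proof is essentially an algebraic dictionary-check, so the real content is identifying which ADMM quantity plays the role of each DRS quantity and then showing the proximal steps match.

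First I would translate the proximal steps of DRS applied to the dual. The term $\prox_{\eta h^*}(t^{(r+1)})$ should be shown to equal $p^{(r+1)}$: using the Moreau decomposition (Theorem \ref{prox_2}) in the form $\prox_{\eta h^*}(z) = z - \eta\,\prox_{h/\eta}(z/\eta)$, and the ADMM update $y^{(r+1)} = \prox_{\frac1\gamma h}(b^{(r)} + Ax^{(r+1)})$ together with $p^{(r+1)} = p^{(r)} + \gamma(Ax^{(r+1)} - y^{(r+1)})$, one checks that feeding $t^{(r+1)} = \eta(b^{(r+1)} + y^{(r+1)})$ through $\prox_{\eta h^*}$ returns $\eta b^{(r+1)} = p^{(r+1)}$; this is where the identity $b^{(r+1)} = b^{(r)} + Ax^{(r+1)} - y^{(r+1)}$ is used. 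Next I would handle the harder prox, $\prox_{\eta g^* \circ (-A^\tT)}$. The $x$-update $x^{(r+1)} \in \argmin_x \{g(x) + \frac{\gamma}{2}\|b^{(r)} + Ax - y^{(r)}\|_2^2\}$ has optimality condition $0 \in \partial g(x^{(r+1)}) + \gamma A^\tT(b^{(r)} + Ax^{(r+1)} - y^{(r)})$; I would rewrite this as $-A^\tT p^{(r+1/2)} \in \partial g(x^{(r+1)})$ for an appropriate intermediate $p$, hence $x^{(r+1)} \in \partial g^*(-A^\tT p^{(r+1/2)})$ by Fenchel conjugacy, and show that $2p^{(r)} - t^{(r)} = \eta(b^{(r)} - y^{(r)})$ is precisely the argument at which the resolvent of $\eta\,\partial(g^* \circ (-A^\tT))$ — i.e.\ $\prox_{\eta g^* \circ(-A^\tT)}$ — produces the value that, added to $t^{(r)} - p^{(r)} = \eta y^{(r)}$, yields $t^{(r+1)} = \eta(b^{(r+1)} + y^{(r+1)})$. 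Throughout I would use Remark \ref{rem:resolvent} (prox $=$ resolvent) and the chain/scaling rules for subdifferentials.

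The main obstacle will be the first DRS step: the operator $\prox_{\eta g^* \circ (-A^\tT)}$ is not simply related to the ADMM $x$-update because $A$ need not be invertible, so one cannot literally read off a prox of $g^*$ — the composition with $-A^\tT$ must be carried along. I expect the cleanest route is to work at the level of subdifferential inclusions rather than explicit minimizers: show that the pair of inclusions characterizing one DRS step (the two resolvent identities) is equivalent, under the substitution \eqref{identi}, to the pair of optimality conditions characterizing one ADMM step (the $x$-minimization and the $y$-minimization / $p$-update), using that $\partial g$, $\partial h$ are maximal monotone so the resolvents are single-valued and everything is well-defined. Once both steps are matched and the base case $t^{(0)} = \eta(b^{(0)} + y^{(0)})$, $p^{(0)} = \eta b^{(0)}$ is imposed, induction closes the argument. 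A minor point to keep in mind: the DRS algorithm as stated in Algorithm \ref{a:DRS} splits $g + h$, so applying it to the dual \eqref{fenchel_D} means taking the two summands $g^* \circ(-A^\tT)$ and $h^*$ in that order, matching \eqref{drs_dual}; getting the order of the two prox operators consistent with the ADMM ordering of the $x$- and $y$-updates is the bookkeeping detail most likely to trip up a careless write-up.
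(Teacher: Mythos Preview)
Your proposal is correct and follows the same inductive skeleton as the paper, but the paper organizes the core computation more economically. Rather than treating the two proximal steps asymmetrically (Moreau decomposition for $\prox_{\eta h^*}$, direct inclusion-chasing for $\prox_{\eta g^*\circ(-A^\tT)}$), the paper first isolates a single implication: if $\hat p$ minimizes $\frac{\eta}{2}\|A p - q\|_2^2 + g(p)$, then $\eta(A\hat p - q) = \prox_{\eta g^*\circ(-A^\tT)}(-\eta q)$. This is proved once via exactly the subdifferential/resolvent manipulation you sketch for the hard step. The paper then applies this lemma twice --- first to the $x$-update with $q = y^{(r)} - b^{(r)}$, then to the $y$-update with $g$ replaced by $h$ and $A$ by $-I$, $q = -b^{(r)} - Ax^{(r+1)}$ --- so both DRS proximal identities fall out of the same computation. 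Your route works equally well and the Moreau-decomposition argument for the $h^*$ step is arguably more transparent there, but you end up doing two separate calculations where one reusable lemma suffices; recognizing that the $y$-step is the special case $A=-I$ of the $x$-step is the small structural insight the paper exploits.
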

\begin{proof}
First, we show that 
\begin{equation} \label{fundament_1}
\hat{p} =\argmin_{p \in \R^m} \left\{ \frac{\eta}{2} \|A p-q\|_2^2 + g(p) \right\}
\quad \Rightarrow \quad
\eta ( A \hat{p}-q ) =  \prox_{\eta g^*\circ (-A^\tT)}(- \eta q)
\end{equation}
holds true. The left-hand side of \eqref{fundament_1} is equivalent to
$$
0 \in \eta A^\tT (A\hat{p}-q)+\partial g(\hat{p})
\quad \Leftrightarrow \quad
 \hat{p} \in \partial g^*\big( - \eta A^\tT(A\hat{p}-q) \big).
$$
Applying $-\eta A$ on both sides and using the chain rule implies
\begin{eqnarray*}
 -\eta A \hat{p} \in - \eta A \partial g^* \big( -\eta A^\tT (A \hat{p}-q) \big)
\; = \;
\eta \, \partial \big( g^*\circ (-A^\tT) \big) \big( \eta(A\hat{p}-q) \big).
\end{eqnarray*}
Adding $-\eta q$ we get
\begin{eqnarray*}
 -\eta q \in \big( I + \eta \, \partial (g^*\circ (-A^\tT)) \big) \big( \eta(A \hat{p}-q) \big),
\end{eqnarray*}
which is equivalent to the right-hand side of \eqref{fundament_1} by the definition of the resolvent
 (see Remark \ref{rem:resolvent}).

Secondly, applying \eqref{fundament_1} to  the first ADMM step
with $\gamma = \eta$ and $q := y^{(r)} - b^{(r)}$ yields
\begin{eqnarray*}
 \eta (b^{(r)}+A x^{(r+1)}-y^{(r)}) = \prox_{\eta g^*\circ (-A^\tT) }(\eta (b^{(r)}-y^{(r)})).
\end{eqnarray*}
Assume that the ADMM and DRS iterates have
the identification \eqref{identi} up to some $r \in \mathbb N$.
Using this induction hypothesis it follows that
\begin{eqnarray}
 \eta (b^{(r)}+A x^{(r+1)}) = \prox_{\eta g^*\circ (-A^\tT)}(\underbrace{\eta (b^{(r)}-y^{(r)})}_{2p^{(r)}-t^{(r)}})
+\underbrace{\eta y^{(r)}}_{t^{(r)}-p^{(r)}} \stackrel{\eqref{drs_dual}}{=} t^{(r+1)} \label{pf_DRS_3}.
\end{eqnarray}
By definition of $b^{(r+1)}$  we see that $\eta (b^{(r+1)}+y^{(r+1)})=t^{(r+1)}$.
Next we apply \eqref{fundament_1} in
the second ADMM step where we replace $g$ by $h$ and $A$ by $-I$ and use
 $q := -b^{(r)} - A x^{(r+1)}$. 
Together with \eqref{pf_DRS_3} this gives
\begin{eqnarray} 
\eta(-y^{(r+1)} + b^{(r)}+A x^{(r+1)}) = \prox_{\eta h^*}(\underbrace{\eta (b^{(r)}+A x^{(r+1)})}_{t^{(r+1)}})\stackrel{\eqref{drs_dual}}{=}p^{(r+1)}. \label{asd}
\end{eqnarray}
Using again the definition of $b^{(r+1)}$ we obtain $\eta b^{(r+1)} =p^{(r+1)}$ which completes the proof.
\end{proof}
%
%----------------------------------------------------------------------------------------
\subsection{Primal Dual Hybrid Gradient Algorithms} \label{subsec:pdhg}
%----------------------------------------------------------------------------------------
%
The first ADMM step \eqref{first_iter} requires in general the solution of a linear system of equations.
This can be avoided by modifying this step using the  Taylor expansion 
at $x^{(r)}$:
$$
\frac{\gamma}{2} \| \frac{1}{\gamma} p^{(r)} + Ax - y^{(r)} \|_2^2 \approx {\rm const} + 
\gamma \langle A^\tT(A x^{(r)} - y^{(r)} + \frac{1}{\gamma} p^{(r)}), x \rangle
+ \frac{\gamma}{2} (x-x^{(r)})^\tT A^\tT A (x- x^{(r)}),
$$
approximating $A^\tT A \approx \frac{1}{\gamma \tau} I$,
setting $\gamma := \sigma$ and using $p^{(r)}/\sigma$ 
instead of $A x^{(r)} - y^{(r)} + p^{(r)}/\sigma$ we obtain
(note that $p^{(r+1)}/\sigma=  p^{(r)}/\sigma + A x^{(r+1)} -  y^{(r+1)}$):
\begin{equation} \label{pdhg}
\begin{array}{lclcl} 
x^{(r+1)}&=& \argmin_{x \in \mathbb R^d} \left\{ g(x) + \frac{1}{2 \tau} \|x - \left( x^{(r)} - \tau A^\tT {p}^{(r)} \right)\|_2^2  \right\}
&=& \prox_{\tau g} \left( x^{(r)} -  \tau A^\tT {p}^{(r)} \right),\\[0.5ex]
y^{(r+1)}&=& \argmin_{y \in \mathbb R^n} \left\{ h(y) + \frac{\sigma}{2} \|\frac1\sigma p^{(r)} + Ax^{(r+1)} - y\|_2^2\right\}
&=&
\prox_{\frac1\sigma h} \left(\frac1\sigma p^{(r)} + Ax^{(r+1)} \right)
,\\[0.5ex]
p^{(r+1)}&=& p^{(r)} + \sigma(A x^{(r+1)} -  y^{(r+1)}). % \\[0.5ex]
%\bar{b}^{(r+1)} &=& 2b^{(r+1)} - b^{(r)}.
\end{array}
\end{equation}
The above algorithm can be deduced in another way by the Arrow-Hurwicz method: 
we alternate the minimization in the primal and dual problems \eqref{L_primal} and \eqref{L_dual} and add quadratic terms. 
The resulting sequences
\begin{align}
 x^{(r+1)}&= \argmin_{x \in \mathbb R^d} \left\{ g(x) +  \langle p^{(r)}, Ax \rangle  + \frac{1}{2\tau} \|x-x^{(r)}\|_2^2\right\}, \nonumber\\
&= \prox_{\tau g} (x^{(r)} - \tau A^\tT p^{(r)})
\label{xxnew1} \\
 p^{(r+1)}&= \argmin_{p \in \mathbb R^m} \left\{ h^*(p) - \langle p, Ax^{(r+1)} \rangle + \frac{1}{2\sigma} \|p-p^{(r)}\|_2^2\right\},\nonumber \\
&= \prox_{\sigma h^*} (p^{(r)} + \sigma A x^{(r+1)})
\label{xxnew2}
 \end{align}
coincide with those in \eqref{pdhg} which can be seen as follows: For $x^{(r)}$ the relation is straightforward.
%$b^{(r)} = p^{(r)}/\sigma$
From the last equation we obtain 
\begin{eqnarray*}
   p^{(r)} + \sigma A x^{(r+1)} &\in& p^{(r+1)} + \sigma \partial h^* (p^{(r+1)}),\\
  \frac{1}{\sigma} (p^{(r)} - p^{(r+1)}) + Ax^{(r+1)} &\in& \partial h^* (p^{(r+1)}),  
\end{eqnarray*}
and using that $p \in \partial h(x) \Leftrightarrow x \in \partial h^*(p)$ further
$$
p^{(r+1)} \in \partial h \big( \underbrace{\frac{1}{\sigma} (p^{(r)} - p^{(r+1)}) + Ax^{(r+1)}}_{y^{(r+1)}} \big).
$$
Setting
$$
y^{(r+1)} := \frac{1}{\sigma} (p^{(r)} - p^{(r+1)}) + Ax^{(r+1)},
$$
we get
\begin{equation} \label{pdhg_help_2}
p^{(r+1)} = p^{(r)} + \sigma (Ax^{(r+1)} - y^{(r+1)} )
\end{equation} 
and $p^{(r+1)} \in \partial h (y^{(r+1)})$ which can be rewritten as
\begin{eqnarray*}
y^{(r+1)} + \frac{1}{\sigma} p^{(r+1)} &\in& y^{(r+1)} + \frac{1}{\sigma} \partial h( y^{(r+1)}),\\
\frac{1}{\sigma} p^{(r)} + A x^{(r+1)} &\in& y^{(r+1)} + \frac{1}{\sigma} \partial h( y^{(r+1)}),\\
y^{(r+1)} &=& \prox_{\frac{1}{\sigma}  h} \left( \frac{1}{\sigma} p^{(r)} + A x^{(r+1)}\right).
\end{eqnarray*}
There are several modifications of the basic linearized ADMM which improve its convergence properties as
\begin{itemize}
\item[-] the predictor corrector proximal multiplier method\index{predictor corrector proximal multiplier method} \cite{CT94},
\item[-] the primal dual hybrid gradient method (PDHG)\index{primal dual hybrid gradient method (PDHG)} \cite{ZC08} 
with convergence proof in \cite{BR2012},
\item[-] primal dual hybrid gradient method with extrapolation of the primal or dual variable \cite{CP11,PCCB09},
 a preconditioned version \cite{CP11a} and a generalization \cite{con2013}, Douglas-Rachford-type algorithms \cite{BH2013,BH2014} for solving inclusion
equations, see also \cite{CP2012,Vu2011}, 
as well an extension allowing the operator $A$ to be non-linear \cite{Valkonen2013}.
\end{itemize}
A good overview on primal-dual methods is also given in \cite{KP14}.
Here is the algorithm proposed by Chambolle, Cremers and Pock \cite{CP11,PCCB09}.
%
%------------------------------------------------------------
\begin{algorithm}[H]
\caption{Primal Dual Hybrid Gradient Method with Extrapolation of Dual Variable (PDHGMp) \label{alg:PDHGMp}}
\begin{algorithmic}
\STATE \textbf{Initialization:} $y^{(0)}, b^{(0)}= b^{(-1)}\in \mathbb R^m$, $\tau,\sigma > 0$ with $\tau \sigma < 1/\|A\|_2^2$ and $\theta \in (0,1]$
\STATE \textbf{Iterations:} For $r = 0,1,\ldots$
\STATE
$
\begin{array}{lcl}
x^{(r+1)} &=& \argmin_{x \in \mathbb R^d} \left\{ g(x) +  \frac{1}{2\tau} \|x - (x^{(r)} - \tau\sigma A^\tT \bar b^{(r)} )\|_2^2  \right\}\\
y^{(r+1)} &=& \argmin_{y \in \mathbb R^m} \left\{ h(y) + \frac{\sigma}{2} \|b^{(r)} + Ax^{(r+1)} - y\|_2^2\right\}\\
b^{(r+1)} &=& b^{(r)} + A x^{(r+1)} -  y^{(r+1)}.  \\ 
\bar b^{(r+1)}&=&  b^{(r+1)} + \theta (b^{(r+1)} - b^{(r)} )
\end{array}
$
\end{algorithmic}
\end{algorithm}
Note that the new first updating step can be also deduced by applying the so-called  
{\it inexact Uzawa algorithm}\index{inexact Uzawa algorithm} to the first ADMM step (see Section \ref{subsec:proximal_admm}). 
Furthermore, it can be directly seen that for $A$ being the identity and  $\theta = 1$ and $\gamma = \sigma = \frac1\tau$, 
the PDHGMp algorithm corresponds to the ADMM as well as the Douglas-Rachford splitting algorithm as proposed in Section \ref{subsec:admm}.
The following theorem and convergence proof are based on \cite{CP11}.
%
%--------------------------------------------------------------------
\begin{theorem} \label{chambolle_pock}
Let $g\in \Gamma_0(\R^d)$, $h \in \Gamma_0(\R^m)$ and $\theta \in (0,1]$.
Let $\tau, \sigma > 0$ fulfill 
\begin{equation} \label{cond_pdhg}
\tau \sigma < 1/\|A\|_2^2.
\end{equation} 
Suppose that the Lagrangian
$L(x,p) := g(x) - h^*(p) + \langle Ax,p \rangle$
has a saddle point $(x^*, p^*)$.
Then the sequence $\{(x^{(r)},p^{(r)})\}_r$
produced by PDGHMp converges to a saddle point of the Lagrangian.
\end{theorem}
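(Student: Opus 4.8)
The plan is to rewrite one iteration of PDHGMp as a single update on the pair $(x,p)$ and measure progress towards the saddle point using a carefully chosen weighted quadratic, exactly as in the proof of Theorem~\ref{thm:fFBS} but now in the product space. First I would record the two proximal steps in subdifferential form: from the $x$-update one gets, for all $x\in\R^d$,
\[
g(x) \ge g(x^{(r+1)}) + \tfrac{1}{\tau}\langle x^{(r)} - x^{(r+1)} - \tau\sigma A^\tT\bar b^{(r)},\, x - x^{(r+1)}\rangle,
\]
and similarly, after identifying $p^{(r+1)} = \sigma b^{(r+1)}$ (so that the $y$-update is the $\prox_{\sigma h^*}$ step $p^{(r+1)} = \prox_{\sigma h^*}(p^{(r)} + \sigma A x^{(r+1)})$, using the computation already carried out in \eqref{pdhg_help_2} and the lines following it), for all $p\in\R^m$,
\[
h^*(p) \ge h^*(p^{(r+1)}) + \tfrac{1}{\sigma}\langle p^{(r)} - p^{(r+1)} + \sigma A x^{(r+1)},\, p - p^{(r+1)}\rangle.
\]
The extrapolated quantity $\bar b^{(r)}$ corresponds to $\bar p^{(r)} = p^{(r)} + \theta(p^{(r)} - p^{(r-1)})$, so $\tau\sigma A^\tT \bar b^{(r)} = \tau A^\tT \bar p^{(r)}$.

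Next I would add these two inequalities evaluated at the saddle point $(x,p) = (x^*,p^*)$, and use the saddle-point inequalities $g(x^*) - h^*(p^{(r+1)}) + \langle A x^*, p^{(r+1)}\rangle \le L(x^*,p^*) \le g(x^{(r+1)}) - h^*(p^*) + \langle A x^{(r+1)}, p^*\rangle$ to cancel the function-value terms. The cross terms involving $A$ combine into $\langle A(x^{(r+1)} - x^*),\, p^{(r+1)} - p^*\rangle$ together with the extrapolation defect $\langle A(x^{(r+1)} - x^*),\, p^{(r+1)} - \bar p^{(r)}\rangle$; expanding $\bar p^{(r)}$ this defect splits into a telescoping piece $\langle A(x^{(r+1)} - x^*), p^{(r+1)} - p^{(r)}\rangle$ and a piece $-\theta\langle A(x^{(r+1)} - x^*), p^{(r)} - p^{(r-1)}\rangle$ which one re-indexes and controls. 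After using the polarization identity $2\langle a-b, a-c\rangle = \|a-b\|^2 + \|a-c\|^2 - \|b-c\|^2$ on the proximal terms, the core inequality takes the form
\[
\tfrac{1}{2\tau}\|x^{(r+1)} - x^*\|_2^2 + \tfrac{1}{2\sigma}\|p^{(r+1)} - p^*\|_2^2 + (\text{coupling at step } r{+}1)
\le \tfrac{1}{2\tau}\|x^{(r)} - x^*\|_2^2 + \tfrac{1}{2\sigma}\|p^{(r)} - p^*\|_2^2 + (\text{coupling at step } r) - (\text{nonneg. residuals}),
\]
where the coupling term is bounded using the condition \eqref{cond_pdhg}: by $\langle A u, q\rangle \le \|A\|_2 \|u\|_2\|q\|_2$ and Young's inequality, $\tau\sigma\|A\|_2^2 < 1$ guarantees that the quadratic form $\tfrac{1}{2\tau}\|x\|^2 + \tfrac{1}{2\sigma}\|p\|^2 - \langle Ax, p\rangle$ (and its extrapolated variant) is positive definite, so the left side dominates a fixed positive multiple of $\|x^{(r+1)} - x^*\|_2^2 + \|p^{(r+1)} - p^*\|_2^2$.

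From here the endgame is standard. Summing the telescoped inequality over $r$ shows that the residuals $\|x^{(r+1)} - x^{(r)}\|_2$ and $\|p^{(r+1)} - p^{(r)}\|_2$ tend to $0$ and that $(x^{(r)}, p^{(r)})$ is bounded; hence some subsequence converges to a limit $(\tilde x, \tilde p)$. Passing to the limit in the subdifferential characterizations above (using closedness of the graphs of $\partial g$ and $\partial h^*$, i.e.\ that these are maximal monotone, together with the vanishing residuals) shows $(\tilde x, \tilde p)$ satisfies the optimality system $0 \in \partial g(\tilde x) + A^\tT \tilde p$, $0\in \partial h^*(\tilde p) - A\tilde x$, so it is itself a saddle point. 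Re-running the monotonicity estimate with $(x^*, p^*)$ replaced by $(\tilde x, \tilde p)$ makes the weighted distance to $(\tilde x,\tilde p)$ monotonically nonincreasing, and since it has a subsequence going to $0$ the whole sequence converges to $(\tilde x,\tilde p)$. The main obstacle is the bookkeeping in the second step: correctly handling the extrapolation term so that it telescopes and is absorbed by the positive-definiteness coming from \eqref{cond_pdhg}; once the right combination of inequalities is assembled, the convergence argument follows the same pattern as Theorems \ref{conv_exact} and \ref{thm:ernte}.
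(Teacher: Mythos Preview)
Your plan is essentially the paper's own proof: the same two subdifferential inequalities, the polarization identity, the saddle-point inequality to kill the function values, Young's inequality with the condition $\tau\sigma\|A\|_2^2<1$ to absorb the extrapolation term, summation to get boundedness and vanishing increments, and finally the Fej\'er-type endgame. Two small remarks: (i) your description of the cross terms is slightly off --- after the saddle-point cancellation the $\langle A(x^{(r+1)}-x^*),\,p^{(r+1)}-p^*\rangle$ piece disappears entirely and only the extrapolation defect $\langle A(x^{(r+1)}-x^*),\,p^{(r+1)}-\bar p^{(r)}\rangle$ survives; (ii) the paper identifies the cluster point as a saddle point via continuity of the iteration map $T$ rather than via graph-closedness of $\partial g,\partial h^*$, but your argument is equivalent.
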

%--------------------------------------------------------------------
%
\begin{proof} 
We restrict the proof to the case $\theta = 1$.
For arbitrary $\bar x \in \R^d, \bar p \in \R^m$ consider according to \eqref{xxnew1} and \eqref{xxnew2} the iterations 
\begin{eqnarray*}
x^{(r+1)}&=& (I + \tau \partial g)^{-1} \left( x^{(r)} - \tau A^\tT \bar p  \right),\\
p^{(r+1)}&=& (I + \sigma \partial h^*)^{-1} \left( p^{(r)} + \sigma A \bar x \right),
 \end{eqnarray*}
 i.e.,
$$
\frac{x^{(r)} - x^{(r+1)}}{\tau} - A^\tT \bar p \in   \partial g \left( x^{(r+1)}  \right),\quad
\frac{ p^{(r)} - p^{(r+1)}}{\sigma} + A \bar x \in   \partial h^* \left( p^{(r+1)}  \right).
$$
 By definition of the subdifferential we obtain for all $x \in \R^d$ and all $p \in \R^m$ that
 \begin{eqnarray*}
g(x) &\ge& g(x^{(r+1)}) + \frac{1}{\tau} \langle x^{(r)}- x^{(r+1)}, x - x^{(r+1)} \rangle - \langle A^\tT \bar p,  x - x^{(r+1)} \rangle ,\\
h^*(p) &\ge& h^*(p^{(r+1)}) + \frac{1}{\sigma} \langle p^{(r)}- p^{(r+1)}, p - p^{(r+1)} \rangle + \langle p - p^{(r+1)}, A \bar x \rangle
 \end{eqnarray*}
 and by adding the equations
 \begin{eqnarray*}
 0 &\ge& g(x^{(r+1)}) - h^*(p) - \left( g(x) - h^*(p^{(r+1)}) \right) 
 - \langle A^\tT \bar p,  x - x^{(r+1)} \rangle + \langle p - p^{(r+1)}, A \bar x \rangle\\
 &&
 +\; \frac{1}{\tau} \langle x^{(r)}- x^{(r+1)}, x - x^{(r+1)} \rangle 
 +\frac{1}{\sigma} \langle p^{(r)}- p^{(r+1)}, p - p^{(r+1)} \rangle.
 \end{eqnarray*}
 By
 $$
 \langle x^{(r)}- x^{(r+1)}, x - x^{(r+1)} \rangle
 =
 \frac12 \left( \|x^{(r)}- x^{(r+1)} \|_2^2 + \|x - x^{(r+1)} \|_2^2 - \|x - x^{(r)} \|_2^2\right)
 $$
 this can be rewritten as
 \begin{eqnarray*}
 &&
 \frac{1}{2\tau} \|x - x^{(r)} \|_2^2 + \frac{1}{2\sigma} \|p - p^{(r)} \|_2^2\\
 &\ge& 
 \frac{1}{2\tau} \|x^{(r)} - x^{(r+1)} \|_2^2 + \frac{1}{2\tau} \|x - x^{(r+1)} \|_2^2
 +
 \frac{1}{2\sigma} \|p^{(r)} - p^{(r+1)} \|_2^2 + \frac{1}{2\sigma} \|p - p^{(r+1)} \|_2^2
 \\
 && 
 + \; \left( g(x^{(r+1)}) - h^*(p) + \langle p, A x^{(r+1)} \rangle \right) 
 - \left( g(x) - h^*(p^{(r+1)}) + \langle p^{(r+1)}, A x \rangle \right) \\
 &&
 - \langle p, A x^{(r+1)} \rangle 
 + \langle p^{(r+1)}, A x \rangle
  - \langle \bar p,  A(x - x^{(r+1)}) \rangle 
  + \langle p - p^{(r+1)}, A \bar x \rangle
  \\
&=&
 \frac{1}{2\tau} \|x^{(r)} - x^{(r+1)} \|_2^2 + \frac{1}{2\tau} \|x - x^{(r+1)} \|_2^2 +
 \frac{1}{2\sigma} \|p^{(r)} - p^{(r+1)} \|_2^2 + \frac{1}{2\sigma} \|p - p^{(r+1)} \|_2^2
 \\
 && + \; 
 \left( g(x^{(r+1)}) - h^*(p) + \langle p, A x^{(r+1)} \rangle \right)   - 
 \left( g(x) - h^*(p^{(r+1)}) + \langle p^{(r+1)}, A x \rangle \right) \\
 &&
  + \; \langle p^{(r+1)} - p,  A(x^{(r+1)} - \bar x) \rangle 
  - \langle  p^{(r+1)} - \bar p, A (x^{(r+1)} - x) \rangle. 
 \end{eqnarray*}
 For any saddle point $(x^*, p^*)$ we have that
 $L(x^*,p) \le L(x^*, p^*) \le L(x,p^*)$ for all $x,p$ 
 so that in particular
 $0 \le L(x^{(r+1)},p^*) - L(x^*,p^{(r+1)})$.
 Thus, using $(x,p) := (x^*,p^*)$ in the above inequality, we get
 \begin{eqnarray*}
 &&\frac{1}{2\tau} \|x^* - x^{(r)} \|_2^2 + \frac{1}{2\sigma} \|p^* - p^{(r)} \|_2^2
 \\
 &\ge& 
 \frac{1}{2\tau} \|x^{(r)} - x^{(r+1)} \|_2^2 + \frac{1}{2\tau} \|x^* - x^{(r+1)} \|_2^2 +
 \frac{1}{2\sigma} \|p^{(r)} - p^{(r+1)} \|_2^2 + \frac{1}{2\sigma} \|p^* - p^{(r+1)} \|_2^2
 \\
 && +
 \langle p^{(r+1)} - p^*,  A(x^{(r+1)} - \bar x) \rangle 
  - \langle  p^{(r+1)} - \bar p, A (x^{(r+1)} - x^*) \rangle. 
 \end{eqnarray*}
 In the algorithm we use $\bar x := x^{(r+1)}$ and $\bar p := 2 p^{(r)} - p^{(r-1)}$.
 Note that $\bar p = p^{(r+1)}$ would be the better choice, but this is impossible
 if we want to keep on an explicit algorithm.
 For these values the above inequality further simplifies to
 \begin{eqnarray*}
&& \frac{1}{2\tau} \|x^* - x^{(r)} \|_2^2 + \frac{1}{2\sigma} \|p^* - p^{(r)} \|_2^2
 \\
 &\ge& 
 \frac{1}{2\tau} \|x^{(r)} - x^{(r+1)} \|_2^2 + \frac{1}{2\tau} \|x^* - x^{(r+1)} \|_2^2 +
 \frac{1}{2\sigma} \|p^{(r)} - p^{(r+1)} \|_2^2 + \frac{1}{2\sigma} \|p^* - p^{(r+1)} \|_2^2
 \\
 && 
 + \; \langle  p^{(r+1)} - 2p^{(r)} + p^{(r-1)}, A (x^* - x^{(r+1)}) \rangle. 
 \end{eqnarray*}
 We estimate the last summand using Cauchy-Schwarz's inequality as follows:
 \begin{eqnarray*}
 &&\langle  p^{(r+1)} - p^{(r)} -(p^{(r)}- p^{(r-1)}), A (x^* - x^{(r+1)}) \rangle
 \\
 &=&
 \langle  p^{(r+1)} - p^{(r)},A (x^* - x^{(r+1)}) \rangle
 -
 \langle  p^{(r)} - p^{(r-1)},A (x^* - x^{(r)}) \rangle
 \\
 &&
 - \;
 \langle  p^{(r)} - p^{(r-1)},A (x^{(r)} - x^{(r+1)}) \rangle
  \\
 &\ge&
 \langle  p^{(r+1)} - p^{(r)},A (x^* - x^{(r+1)} ) \rangle
 -
 \langle  p^{(r)} - p^{(r-1)},A ( x^* - x^{(r)}) \rangle
 \\
 &&- \;
 \|A\|_2 \|x^{(r+1)} - x^{(r)} \|_2 \, \|p^{(r)} - p^{(r-1)}\|_2.
 \end{eqnarray*}
 Since
 \begin{equation} \label{nett}
 2uv \le \alpha u^2 + \frac{1}{\alpha} v^2, \quad \alpha >0,
 \end{equation}
 we obtain
 \begin{eqnarray*}
 \|A\|_2 \|x^{(r+1)} - x^{(r)} \|_2 \, \|p^{(r)} - p^{(r-1)}\|_2
 &\le&
 \frac{\|A\|_2}{2} \left( \alpha \|x^{(r+1)} - x^{(r)} \|_2^2 \, + \, \frac{1}{\alpha} \|p^{(r)} - p^{(r-1)}\|_2^2 \right)
 \\
 &=&
 \frac{\|A\|_2 \alpha \tau}{2 \tau}  \|x^{(r+1)} - x^{(r)} \|_2^2 \, + \, \frac{\|A \|_2 \sigma}{2 \alpha \sigma} \|p^{(r)} - p^{(r-1)}\|_2^2 .
 \end{eqnarray*}
With $\alpha := \sqrt{\sigma/\tau}$ the relation
$$\|A\|_2 \alpha \tau = \frac{\|A \|_2 \sigma}{ \alpha} = \|A \|_2 \sqrt{\sigma \tau} < 1$$
holds true. Thus, we get 
\begin{eqnarray}
&& \frac{1}{2\tau} \|x^* - x^{(r)} \|_2^2 + \frac{1}{2\sigma} \|p^* - p^{(r)} \|_2^2 \nonumber
 \\
 &\ge& 
 \frac{1}{2\tau} \|x^* - x^{(r+1)} \|_2^2 + \frac{1}{2\sigma} \|p^* - p^{(r+1)} \|_2^2 \nonumber
 \\
 &&+ \;
 \frac{1}{2\tau} (1-\|A \|_2 \sqrt{\sigma \tau})  \|x^{(r+1)} - x^{(r)} \|_2^2
 +
 \frac{1}{2\sigma} \|p^{(r+1)} - p^{(r)} \|_2^2
 -
 \frac{ \|A \|_2 \sqrt{\sigma \tau} }{2\sigma} \|p^{(r)} - p^{(r-1)} \|_2^2 \nonumber
 \\
 &&+ \;
  \langle  p^{(r+1)} - p^{(r)},A (x^* - x^{(r+1)}) \rangle
 -
 \langle  p^{(r)} - p^{(r-1)},A (x^* - x^{(r)} ) \rangle. \label{zwischenresultat}
 \end{eqnarray}
 Summing up these inequalities from $r=0$ to $N-1$ and regarding that
 $p^{(0)} = p^{(-1)}$, we obtain
 \begin{eqnarray*}
&& \frac{1}{2\tau} \|x^* - x^{(0)} \|_2^2 + \frac{1}{2\sigma} \|p^* - p^{(0)} \|_2^2
 \\
 &\ge& 
 \frac{1}{2\tau} \|x^* - x^{(N)} \|_2^2 + \frac{1}{2\sigma} \|p^* - p^{(N)} \|_2^2 
 \\
 &&+ \;
 (1-\|A \|_2 \sqrt{\sigma \tau})  
 \left( \frac{1}{2\tau} \sum_{r=1}^N \|x^{(r)} - x^{(r-1)} \|_2^2
 +
 \frac{1}{2\sigma} \sum_{r=1}^{N-1} \|p^{(r)} - p^{(r-1)} \|_2^2
 \right)
 \\
 &&+ \;
 \frac{1}{2\sigma} \|p^{(N)} - p^{(N-1)} \|_2^2
 +
  \langle  p^{(N)} - p^{(N-1)},A (x^* - x^{(N)} ) \rangle
 \end{eqnarray*}
 By
 $$
  \langle  p^{(N)} - p^{(N-1)},A (x^{(N)} - x^*) \rangle
  \le 
  \frac{1}{2\sigma} \| p^{(N)} - p^{(N-1)} \|_2^2 
  +
  \frac{\|A\|_2^2 \sigma \tau}{2\tau} \| x^{(N)} - x^* \|_2^2 
 $$
 this can be further estimated as
 \begin{eqnarray}
 &&\frac{1}{2\tau} \|x^* - x^{(0)} \|_2^2 + \frac{1}{2\sigma} \|p^* - p^{(0)} \|_2^2 \nonumber
 \\
  &\ge& 
 \frac{1}{2\tau} (1 - \|A\|_2^2 \sigma \tau) \|x^* - x^{(N)} \|_2^2 + \frac{1}{2\sigma} \|p^* - p^{(N)} \|_2^2 \nonumber
 \\
 &&+ \;
 (1-\|A \|_2 \sqrt{\sigma \tau})  
 \left( \frac{1}{2\tau} \sum_{k=1}^N \|x^{(r)} - x^{(r-1)} \|_2^2+
 \frac{1}{2\sigma} \sum_{k=1}^{N-1} \|p^{(r)} - p^{(r-1)} \|_2^2
 \right). \label{fast_geschafft}
  \end{eqnarray}
 By \eqref{fast_geschafft} we conclude that the sequence $\{ ( x^{(n)},p^{(n)} ) \}_n$ is bounded.
 Thus, there exists a convergent subsequence $\{ ( x^{(n_j)},p^{(n_j)} ) \}_j$ which convergenes
 to some point $(\hat x,\hat p)$ as $j \rightarrow \infty$.
 Further, we see by \eqref{fast_geschafft} that
 $$
 \lim_{n \rightarrow \infty} \left( x^{(n)} - x^{(n-1)} \right) = 0, \quad
 \lim_{n \rightarrow \infty} \left(p^{(n)} - p^{(n-1)}  \right)= 0.
 $$
 Consequently,
 $$\lim_{j \rightarrow \infty} \left(x^{(n_j -1)} - \hat x \right) = 0,
 \quad
 \lim_{j \rightarrow \infty} \left(p^{(n_j - 1)} - \hat p  \right)= 0
 $$
 holds true.
 Let $T$ denote the iteration operator of the PDHGMp cycles,
 i.e., $T(x^{(r)},p^{(r)}) = (x^{(r+1)},p^{(r+1)})$.
 Since $T$ is the concatenation of affine operators and proximation operators,
 it is continuous.
 Now we have that
 $T\left(x^{(n_j-1)}, p^{(n_j-1)}\right) = \left(x^{(n_j)}, p^{(n_j)}\right)$
 and taking the limits for $j \rightarrow \infty$ we see that
 $T(\hat x,\hat p) = (\hat x,\hat p)$ so that $(\hat x,\hat p)$ 
 is a fixed point of the iteration
 and
 thus a saddle point of $L$.
 Using this particular saddle point in \eqref{zwischenresultat} and summing up from 
 $r=n_j$ to $N-1$, $N > n_j$ we obtain
 \begin{eqnarray*}
&& \frac{1}{2\tau} \|\hat x - x^{(n_j)} \|_2^2 + \frac{1}{2\sigma} \|\hat p - p^{(n_j)} \|_2^2 
 \\
 &\ge& 
 \frac{1}{2\tau} \|\hat x - x^{(N)} \|_2^2 + \frac{1}{2\sigma} \|\hat p - p^{(N)} \|_2^2 
 \\
 &&+ \;
 (1-\|A \|_2 \sqrt{\sigma \tau}) \left( \frac{1}{2\tau} \sum_{r = n_j}^{N-1} \|x^{(r+1)} - x^{(r)} \|_2^2
 +
 \frac{1}{2\sigma} \sum_{r = n_j+1}^{N-1} \|p^{(r)} - p^{(r-1)} \|^2 \right)
  \\
 &&+ \;
 \frac{1}{2\sigma} \|p^{(N)} - p^{(N-1)}\|^2 - \frac{\|A \|_2 \sqrt{\sigma \tau}}{2 \sigma} \|p^{(n_j)} - p^{(n_j-1)}\|_2^2 
 \\
 &&+ \;
  \langle  p^{(N)} - p^{(N-1)},A (\hat x - x^{(N)}) \rangle
 -
 \langle  p^{(n_j)} - p^{(n_j-1)},A (\hat x - x^{(n_j)}) \rangle 
 \end{eqnarray*}
 and further
 \begin{eqnarray*}
&& \frac{1}{2\tau} \|\hat x - x^{(n_j)} \|_2^2 + \frac{1}{2\sigma} \|\hat p - p^{(n_j)} \|_2^2 
 \\
 &\ge& 
 \frac{1}{2\tau} \|\hat x - x^{(N)} \|_2^2 + \frac{1}{2\sigma} \|\hat p - p^{(N)} \|_2^2 
 \\
  &&+ \;
 \frac{1}{2\sigma} \|p^{(N)} - p^{(N-1)}\|_2^2 - \frac{\|A \|_2 \sqrt{\sigma \tau}}{2 \sigma} \|p^{(n_j)} - p^{(n_j-1)}\|_2^2 
 \\
 &&+ \;
  \langle  p^{(N)} - p^{(N-1)},A (\hat x - x^{(N)}) \rangle
 -
 \langle  p^{(n_j)} - p^{(n_j-1)},A (\hat x - x^{(n_j)}) \rangle 
\end{eqnarray*}
 For $j \rightarrow \infty$ this implies that $(x^{(N)},p^{(N)})$ converges also to $(\hat x, \hat y)$
 and we are done. 
 \end{proof}
%----------------------------------------------------------------------------------------------
\subsection{Proximal ADMM} \label{subsec:proximal_admm}
%---------------------------------------------------------------------------------------
To avoid the computation of a linear system of equations in the first ADMM step \eqref{first_iter}, 
an alternative to  the linearized ADMM offers the proximal ADMM algorithm \cite{He2002,ZBO09} that 
can be interpreted as a preconditioned variant of ADMM. In this variant the minimization step \eqref{first_iter} 
is replaced by a proximal-like iteration based on the general proximal operator \eqref{precond},
\begin{equation} \label{proximal_admm}
% \begin{array}{lcl}
x^{(r+1)} = \argmin_{x \in \R^d} \{ L_\gamma(x,y^{(r)},p^{(r)}) + \frac{1}{2} \| x - x^{(r)}\|^2_R \} % \\[0.5ex]
% y^{(r+1)} &=& \argmin_{y \in \R^m} \{ L_\gamma(x^{(r+1)},y,p^{(r)}) + \frac{1}{2} \| y - y^{(r)}\|^2_S \},
% \end{array}
\end{equation}
with a symmetric, positive definite matrix $R \in \R^{d,d}$. 
The introduction of $R$ provides an additional flexibility to cancel out linear operators which might be difficult to invert. 
In addition the modified minimization problem is strictly convex inducing a unique minimizer. 
In the same manner the second ADMM step \eqref{second_iter} can also be extended by a proximal term $(1/2) \| y - y^{(r)}\|^2_S$ 
with a symmetric, positive definite matrix $S \in \R^{m,m}$ \cite{ZBO09}. 
The convergence analysis of the proximal ADMM was provided in \cite{ZBO09} 
and the algorithm can be also classified as an inexact Uzawa method. 
A generalization, where the matrices $R$ and $S$ can non-monotonically vary in each iteration step, 
was analyzed in \cite{He2002}, additionally allowing an inexact computation of minimization problems.

In case of the PDHGMp algorithm, it was mentioned that the first updating step can be deduced 
by applying the inexact Uzawa algorithm to the first ADMM step. 
Using the proximal ADMM, it is straightforward to see that the first updating step of the PDHGMp algorithm with $\theta = 1$ 
corresponds to \eqref{proximal_admm} in case of $R = \frac1{\tau} I - \sigma A^\tT A$ with $0 < \tau < 1 / \| \sigma A^\tT A \|$, see \cite{CP11,Esser2010}. 
Further relations of the (proximal) ADMM to primal dual hybrid methods discussed above can be found in \cite{Esser2010}.
%----------------------------------------------------------------------------------------------
\subsection{Bregman Methods} \label{subsec:bregman}
%---------------------------------------------------------------------------------------
Bregman methods became very popular in image processing 
by a series papers of Osher and co-workers, see, e.g.,
\cite{GO09,OBGXY05}. Many of these methods can be interpreted as ADMM methods and its linearized versions.
In the following we briefly sketch these relations.
 
The PPA is a special case of the Bregman PPA.
Let
$\varphi: \R^d \rightarrow \R \cup \{+ \infty\}$ by a convex function.
Then the {\it Bregman distance} $D_\varphi^{p}: \R^d \times \R^d \rightarrow \R$ is given by
$$
D_\varphi^{p} (x,y) = \varphi(x) - \varphi(y) - \langle p,x-y \rangle
$$
with $p \in \partial \varphi(y)$, $y \in \dom f$. If $\partial \varphi(y)$ contains only one element, we just write $D_\varphi$.
If $\varphi$ is smooth, then the Bregman distance can be interpreted as substracting the first order Taylor expansion 
of $\varphi(x)$ at $y$.
\begin{example} {\rm (Special Bregman Distances)}\\
1. The Bregman distance corresponding to  $\varphi (x):= \frac12 \| x \|^2_2$ is given by
$D_\varphi (x,y) = \frac12 \|x-y \|^2_2$.
\\
2. For the negative  Shannon entropy $\varphi (x) := \langle 1_d, x \log x \rangle$, $x > 0$ we obtain
the (discrete) $I$-divergence or generalized Kullback-Leibler entropy
$D_\varphi (x,y) = x \log \frac{x}{y} - x + y$.
\end{example}
For $f \in \Gamma_0(\mathbb R^d)$ we consider the generalized proximal problem
$$
\argmin_{y \in \R^d} \left\{ \frac{1}{\gamma} D_\varphi^{p} (x,y)  + f(y) \right\}.
$$
The Bregman Proximal Point Algorithm (BPPA) for solving this problem reads as follows:
\begin{algorithm}[H]
\caption{Bregman Proximal Point Algorithm (BPPA) \label{a:BPP} }
\begin{algorithmic}
\STATE \textbf{Initialization:} $x^{(0)} \in \R^d$, $p^{(0)} \in \partial \varphi(x^{(0)})$, $\gamma >0$
\STATE \textbf{Iterations:} For $r = 0,1,\ldots$
\STATE
$
\begin{array}{l}	
x^{(r+1)} = \argmin_{y \in \R^d} \left\{ \frac{1}{\gamma} D_\varphi^{p^{(r)}} (y,x^{(r)})  + f(y) \right\}\\
p^{(r+1)} \in \partial \varphi(x^{(r+1)} )
\end{array}
$
\end{algorithmic}
\end{algorithm}
The BPPA converges for any initialization $x^{(0)}$ to a minimizer of $f$ 
if $f\in \Gamma_0(\mathbb R^d)$ attains its minimum and $\varphi$ is finite, lower semi-continuous and strictly convex.
For  convergence proofs we refer, e.g., to \cite{Kiw97_1,Kiw97}.
We are interested again in the problem \eqref{problem_pdhg_1}, i.e.,
$$
\argmin_{x \in \mathbb R^d, y \in \mathbb R^m} \left\{ \Phi(x,y)  \quad {\rm s.t.} \quad A x = y \right\}, \quad \Phi(x,y) := g(x) + h(y).
$$
We consider the BPP algorithm for the objective function $f(x,y):=\frac{1}{2} \|Ax-y\|^2$ 
with the Bregman distance
\[
 D^{(p_x^{(r)},p_y^{(r)}) }_\Phi \left( (x,y),(x^{(r)},y^{(r)}) \right)= \Phi(x,y) - \Phi(x^{(r)},y^{(r)})-\lb p_x^{(r)},x-x^{(r)}\rb -\lb p_y^{(r)},y-y^{(r)}\rb,
\]
where
$\big( p_x^{(r)}, p_y^{(r)} \big)\in \partial \Phi(x^{(r)},y^{(r)})$. 
This results in 
\begin{align}
 (x^{(r+1)},y^{(r+1)})&=\argmin_{x \in \mathbb R^d, y \in \mathbb R^m} \big\{ \frac{1}{\gamma} D^{(p_x^{(r)},p_y^{(r)})}_{\Phi} \left( (x,y),(x^{(r)},y^{(r)}) \right)
 + \frac{1}{2} \|Ax -y\|^2 \big\}, \nonumber\\
 p^{(r+1)}_x&= p_x^{(r)}-\gamma A^{*}(A x^{(r+1)}-y^{(r+1)}), \label{sp2}\\
 p^{(r+1)}_y &= p_y^{(r)}+\gamma (A x^{(r+1)}-y^{(r+1)}), \label{sp3}
\end{align}
where we have used that the first equation  implies
\begin{align*}
 0&\in \frac{1}{\gamma}\partial \left( \Phi(x^{(r+1)},y^{(r+1)})-\big( p_x^{(r)},p_y^{(r)} \big) \right) + \big( 
 A^{*}(A x^{(r+1)}-y^{(r+1)}),- (A x^{(r+1)}-y^{(r+1)})\big), \\
0&\in \partial \Phi(x^{(r+1)},y^{(r+1)}) - \big( p_x^{(r+1)}, p_y^{(r+1)} \big),
\end{align*}
so that $\big( p_x^{(r)}, p_y^{(r)} \big)\in \partial \Phi(x^{(r)},y^{(r)})$.
From \eqref{sp2} and \eqref{sp3} we see by induction that $p_x^{(r)} = -A^* p_y^{(r)}$.
Setting $p^{(r)} = p^{(r)}_y$
and regarding that 
\begin{align*}
 & \frac{1}{\gamma} D^{p^{(r)}}_\Phi \left((x,y),(x^{(r)},y^{(r)}) \right) + \frac{1}{2 } \|Ax - y\|_2^2 \\
&= {\rm const} + \frac{1}{\gamma} \left( \Phi(x,y) + \langle  A^* p^{(r)} ,x \rangle - \langle  p^{(r)} ,y \rangle \right) + \frac{1}{2 } \|Ax - y\|_2^2\\
& = {\rm const} + \frac{1}{\gamma} \left( \Phi(x,y) + \frac{\gamma}{2} \|\frac{p^{(r)} }{\gamma} + Ax -y\|_2^2  \right)
\end{align*}
we obtain the following split Bregman method, see \cite{GO09}:\\
\begin{algorithm}[H]
\caption{Split Bregman Algorithm \label{a:split_breg} }
\begin{algorithmic}
\STATE \textbf{Initialization:} $x^{(0)} \in \R^d$, $p^{(0)}$, $\gamma >0$
\STATE \textbf{Iterations:} For $r = 0,1,\ldots$
\STATE
$
\begin{array}{ll}
(x^{(r+1)},y^{(r+1)})  &= \argmin\limits_{x \in \R^d,y \in \R^m} \left\{ \Phi(x,y) + \frac{\gamma}{2} \| \frac{p^{(r)}}{\gamma} + A x - y\|_2^2 \right\}\\
p^{(r+1)} &= p^{(r)} + \gamma(A x^{(r+1)} - y^{(r+1)})
\end{array}
$
\end{algorithmic}
\end{algorithm}
Obviously, this is exactly the form of the augmented Lagrangian method in \eqref{ALM}.
%
%----------------------------------------------------------------------------------------------
\section{Iterative Regularization for Ill-posed Problems}
%---------------------------------------------------------------------------------------
So far we have discussed the use of splitting methods for the numerical solution of well-posed variational problems, 
which arise in a discrete setting and in particular for the standard approach of Tikhonov-type regularization in inverse problems in imaging. 
The latter is based on minimizing a weighted sum of a data fidelity and a regularization functional, and can be more generally analyzed 
in Banach spaces, cf. \cite{schuster2012regularization}. 
However, such approaches have several disadvantages, 
in particular it has been shown that they lead to unnecessary bias in solutions, e.g., 
a contrast loss in the case of total variation regularization, cf. \cite{osher2005iterative,burger2013guide}. 
A successful alternative to overcome this issue is iterative regularization, 
which directly applies iterative methods to solve the constrained variational problem
\begin{equation}
	\argmin_{x \in {\cal X}} \left\{ g(x) \quad {\rm s.t.} \quad A x = f \right\}. \label{eq:minimumnorm}
\end{equation}
Here $A:{\cal X} \rightarrow {\cal Y}$ is a bounded linear operator between Banach spaces (also nonlinear versions can be considered, 
cf. \cite{bachmayr2009iterative,kaltenbacher2008iterative}) and $f$ are given data. 
In the well-posed case, \eqref{eq:minimumnorm} can be rephrased as the saddle-point problem
\begin{equation}
	\min_{x \in {\cal X}} \sup_q \left( g(x) - \langle q, A x - f\rangle \right) \label{eq:saddlepointip}
\end{equation}

The major issue compared to the discrete setting is that for many prominent examples the operator $A$ 
does not have a closed range (and hence a discontinuous pseudo-inverse), 
which makes \eqref{eq:minimumnorm} ill-posed. From the optimization point of view, this raises two major issues: 
\begin{itemize}
\item {\em Emptyness of the constraint set:} 
In the practically relevant case of noisy measurements one has to expect that $f$ is not in the range of $A$, 
i.e., the constraint cannot be satisfied exactly. Reformulated in the constrained optimization view, 
the standard paradigm in iterative regularization is to construct an iteration slowly increasing the functional 
$g$ while decreasing the error in the constraint. 
\item {\em Nonexistence of saddle points:} 
Even if the data or an idealized version $A x^*$ to be approximated are in the range of $A$, 
the existence of a saddle point $(x^*,q^*)$ of \eqref{eq:saddlepointip} is not guaranteed. 
The optimality condition for the latter would yield
\begin{equation}
	A^* q^* \in \partial g(x^*),
\end{equation}
which is indeed an abstract smoothness condition on the subdifferential of $g$ at $x^*$ if $A$ and consequently $A^*$ are smoothing operators, 
it is known as source condition in the field of inverse problems, cf. \cite{burger2013guide}.  
\end{itemize}
Due to the above reasons the use of iterative methods for solving respectively approximating \eqref{eq:minimumnorm} 
has a different flavour than iterative methods for well-posed problems. 
The key idea is to employ the algorithm as an iterative regularization method, cf. \cite{kaltenbacher2008iterative}, 
where appropriate stopping in dependence on the noise, i.e. a distance between $Ax^*$ and $f$, 
needs to be performed in order to obtain a suitable approximation. 
The notion to be used is called semiconvergence, i.e., if $\delta > 0$ denotes a measure for the data error (noise level) and $\hat r(\delta)$ 
is the stopping index of the iteration in dependence on $\delta$, then we look for convergence
\begin{equation}
	x^{(\hat r(\delta))} \rightarrow x^* \qquad \mbox{as~} \delta \rightarrow 0,
\end{equation}
in a suitable topology. The minimal ingredient in the convergence analysis is the convergence $x^{(r)} \rightarrow x^*$, 
which already needs different approaches as discussed above. For iterative methods working on primal variables one can at least use the existence of \eqref{eq:minimumnorm} in this case, 
while real primal-dual iterations still suffer from the potential nonexistence of solutions of the saddle point problem \eqref{eq:saddlepointip}. 

A well-understood  iterative method  is the Bregman iteration 
\begin{equation}
	x^{(r+1)} \in \argmin_{x \in {\cal X}} \left( \frac{\mu}2 \Vert A x - f \Vert^2 + D_g^{p^{(r)}}(x,x^{(r)}) \right),
\end{equation}
with $p^{(r)} \in \partial g(x^{(r)})$, which has been analyzed as an iterative method in \cite{osher2005iterative}, 
respectively for nonlinear $A$ in \cite{bachmayr2009iterative}. 
Note that again with $p^{(r)} = A^* q^{(r)}$ the Bregman iteration is equivalent to the augmented Lagrangian method 
for the saddle-point problem \eqref{eq:saddlepointip}. With such iterative regularization methods superior results 
compared to standard variational methods can be computed for inverse and imaging problems, in particular bias can be eliminated, cf. \cite{burger2013guide}.

The key properties are the {\em decrease of the data fidelity} 
\begin{equation}
	\Vert A x^{(r+1)} - f \Vert^2 \leq \Vert A x^{(r)} - f \Vert^2, 
\end{equation}
for all $r$ and the {\em decrease of the Bregman distance to the clean solution}
\begin{equation}
	D_g^{p^{(r+1)}}(x^*,x^{(r+1)}) \leq D_g^{p^{(r)}}(x^*,x^{(r)}) 
\end{equation}
for those $r$ such that 
$$ \Vert A x^{(r)} - f \Vert \geq \Vert A x^* -f \Vert = \delta. $$
Together with a more detailed analysis of the difference between consecutive Bregman distances, 
this can be used to prove semiconvergence results in appropriate weak topologies, cf. \cite{osher2005iterative,schuster2012regularization}. 
In \cite{bachmayr2009iterative} further variants approximating the quadratic terms, such as the linearized Bregman iteration, are analyzed, 
however with further restrictions on $g$. For all other iterative methods discussed above, 
a convergence analysis in the case of ill-posed problems is completely open and appears to be a valuable task for future research. 
Note that in the realization of the Bregman iteration, a well-posed but complex variational problem needs to be solved in each step. 
By additional operator splitting in an iterative regularization method one could dramatically reduce the computational effort. 

If the source condition is satisfied, i.e. if there exists a saddle-point $(x^*,q^*)$, one can further exploit the decrease of {\em dual distances}
\begin{equation}
	\Vert q^{(r+1)} - q^* \Vert \leq \Vert q^{(r)} - q^* \Vert
\end{equation}
to obtain a quantitative estimate on the convergence speed, we refer to \cite{Brune2010,burger2007error} for a further discussion.

%----------------------------------------------------------------------------------------------
\section{Applications}
%---------------------------------------------------------------------------------------
So far we have focused on technical aspects of first order algorithms whose (further) development has been heavily forced by practical applications. 
In this section we give a rough overview of the use of first order algorithms in practice. 
We start with applications from classical imaging tasks such as computer vision and image analysis and proceed to applications in natural and life sciences. 
From the area of biomedical imaging, we will present the {\it Positron Emission Tomography (PET)}\index{Positron Emission Tomography (PET)} and {\it Spectral X-ray CT}\index{Spectral CT} 
in more detail and show some results reconstructed with first order algorithms.

At the beginning it is worth to emphasize that many algorithms based on proximal operators such as proximal point algorithm, 
proximal forward-backward splitting, ADMM, or Douglas-Rachford splitting have been introduced in the 1970s, cf. \cite{GM76,LM79,Ro76a,Ro76}. 
However these algorithms have found a broad application in the last two decades, 
mainly caused by the technological progress. 
Due to the ability of distributed convex optimization with ADMM related algorithms, 
these algorithms seem to be qualified for 'big data' analysis and large-scale applications in applied statistics and machine learning, e.g., in areas as artificial intelligence, 
internet applications, computational biology and medicine, finance, network analysis, or logistics \cite{BPCPE11,PB2013}. 
Another boost for the popularity of first order splitting algorithms was the 
increasing use of sparsity-promoting regularizers based on $\ell_1$- or $L_1$-type penalties \cite{GO09,ROF92}, 
in particular in combination with inverse problems considering non-linear image formation models \cite{bachmayr2009iterative,Valkonen2013} 
and/or statistically derived (inverse) problems \cite{burger2013guide}. The latter problems lead to non-quadratic fidelity terms 
which result from the non-Gaussian structure of the noise model. The overview given in the following mainly concentrates on the latter mentioned applications.

The most classical application of first order splitting algorithms is 
image analysis such as denoising, where these methods were originally pushed by the Rudin, Osher, and Fatemi (ROF) model \cite{ROF92}. 
This model and its variants are frequently used as prototypes for total variation methods in imaging 
to illustrate the applicability of proposed algorithms in case of non-smooth cost functions, cf. \cite{CP11,Es09,Esser2010,GO09,OBGXY05,Se11,ZC08}. 
Since the standard $L_2$ fidelity term is not appropriate for non-Gaussian noise models, modifications of the ROF problem have been considered in the past 
and were solved using splitting algorithms to denoise images perturbed by non-Gaussian noise, cf. \cite{Bioucas-Dias2010,CP11,FB09,Sawatzky2013,Steidl2010}. 
Due to the close relation of total variation techniques to image segmentation \cite{burger2013guide,PCCB09}, 
first order algorithms have been also applied in this field of applications (cf. \cite{CP11a,CP11,Goldstein2010,PCCB09}). 
Other image analysis tasks where proximal based algorithms have been applied successfully are deblurring and zooming (cf. \cite{BR2012,CP11,Esser2010,Setzer2010}), 
inpainting \cite{CP11}, stereo and motion estimation \cite{CEFPP12,CP11,YSS07}, and segmentation \cite{BYT09,LKYBCS09,PCCB09,KSS14}.

Due to increasing noise level in modern biomedical applications, 
the requirement on statistical image reconstruction methods has been risen recently and the proximal methods 
have found access to many applied areas of biomedical imaging. 
Among the enormous amount of applications from the last two decades, 
we only give the following selection and further links to the literature:
\begin{itemize}
\item \emph{X-ray CT:} 
Recently statistical image reconstruction methods have received increasing attention in X-ray CT due to increasing noise level 
encountered in modern CT applications such as sparse/limited-view CT and low-dose imaging, cf., e.g., \cite{Vandeghinste2011,Wang2008,Wang2006}, 
or K-edge imaging where the concentrations of K-edge materials are inherently low, see, e.g., \cite{Schirra2013,Schirra2014,Schlomka2008}. 
In particular, first order splitting methods have received strong attention due to the ability to handle non-standard noise models and sparsity-promoting regularizers efficiently. 
Beside the classical fan-beam and cone-beam X-ray CT (see, e.g., \cite{Anthoine2011a,Chartrand2013,Choi2010,Jia2010,Nien2014,Ramani2012,Sidky2012,Vandeghinste2011}), 
the algorithms have also found applications in emerging techniques such as spectral CT, see Section \ref{sec:spectralCT} and \cite{Gao2011,Sawatzky2014,Xu2014} 
or phase contrast CT \cite{Cong2012,Nilchian2013,Xu2014a}.
\item \emph{Magnetic resonance imaging (MRI):} 
Image reconstruction in MRI is mainly achieved 
by inverting the Fourier transform which can be performed efficiently and robustly if a sufficient number 
of Fourier coefficients is measured. However, this is not the case in special applications such as 
fast MRI protocols, cf., e.g., \cite{Lustig2007,Pruessmann1999}, where the Fourier space is undersampled 
so that the Nyquist criterion is violated and Fourier reconstructions exhibit aliasing artifacts. 
Thus, compressed sensing theory have found the way into MRI by exploiting sparsity-promoting variational approaches, see, e.g., \cite{Benning2014,Huang2011,Ma2008,Ramani2011}. 
Furthermore, in advanced MRI applications such as velocity-encoded MRI or diffusion MRI, the measurements can be modeled more accurately by non-linear operators 
and splitting algorithms provide the ability to handle the increased reconstruction complexity efficiently \cite{Valkonen2013}.
\item \emph{Emission tomography:} 
Emission tomography techniques used in nuclear medicine such as 
\emph{positron emission tomography (PET)} and \emph{single photon emission computed tomography (SPECT)} \cite{Wernick2004} 
are classical examples for inverse problems in biomedical imaging where statistical modeling of the reconstruction problem is essential 
due to Poisson statistics of the data. In addition, in cases where short time or low tracer dose measurements are available (e.g., using cardiac 
and/or respiratory gating \cite{Buther2009}) or tracer with a short radioactive half-life are used (e.g., radioactive water H$_2$$^{15}$O \cite{Schafers2002}), 
the measurements suffer from inherently high noise level and thus a variety of first order splitting algorithms has been utilized in emission tomography, 
see, e.g., \cite{Anthoine2011a,Benning2010,Mehranian2013,Muller2011,Pustelnik2010,Sawatzky2013}.
\item \emph{Optical microscopy:} 
In modern light microscopy techniques such as \emph{stimulated emission depletion (STED)} or \emph{4Pi-confocal fluorescence microscopy} 
\cite{Hell2003,Hell2007} resolutions beyond the diffraction barrier can be achieved allowing imaging at nanoscales. 
However, by reaching the diffraction limit of light, measurements suffer from blurring effects and Poisson noise with low photon count rates \cite{Dey2006,Schrader1998}, 
in particular in live imaging and in high resolution imaging at nanoscopic scales. Thus, regularized (blind) deconvolution addressing appropriate Poisson noise is quite beneficial 
and proximal algorithms have been applied to achieve this goal, cf., e.g., \cite{Brune2010,Frick2013,Sawatzky2013,Setzer2010}.
\item \emph{Other modalities:} It is quite natural that first order splitting algorithms have found a broad usage in biomedical imaging, 
in particular in such applications where the measurements are highly perturbed by noise and thus regularization with probably a proper statistical modeling are essential as, e.g., 
in optical tomography \cite{Abascal2011,Freiberger2010}, medical ultrasound imaging \cite{Sawatzky2013a}, hybrid photo-/optoacoustic tomography \cite{Gao2012,Wang2012}, 
or electron tomography \cite{Goris2012}.
\end{itemize}

%SAR imaging \cite{Steidl2010}, optical interferometry \cite{Thiebaut2013}, \cite{PB2013} matrix decomposition, \cite{ZBO09} basis pursuit, matrix completion

% Growning popularity: non-smooth penalties ($L_1$ or $\ell_1$-based ones), non-standard noise models (Poisson, speckle), 'big data' analysis (see areas as artificial intelligence, internet applications, computational biology, medicine, finance, marketing, journalism, network analysis, or logistics $\Rightarrow$ algorithms capturing the compexity of modern data and scalable enough to process huge datasets in a parallelized or fully decentralized fashion)
% \item large-scale applications in statistics and machine learning; Aplications to statistical and machine learning problems such as lasso, sparse logistic regression, basis pursuit, covariance selection, or support vector machines \cite{BPCPE11}. ADMM is well suited to distributed convex optimization, in particular to problems arising in applied statistics and machine learning
%, multi-period portfolio optimization, stochastic optimization, risk-averse optimization, stochastic control (high-dimenasional statistics, statistical signal processing)

\subsection{Positron Emission Tomography (PET)} \label{sec:PET}

PET is a biomedical imaging technique visualizing biochemical and physiological processes such as glucose metabolism, blood flow, or receptor concentrations, see, e.g., \cite{Wernick2004}. 
This modality is mainly applied in nuclear medicine and the data acquisition is based on weak radioactively marked pharmaceuticals (so-called tracers), 
which are injected into the blood circulation. Then bindings dependent on the choice of the tracer to the molecules are studied. Since the used markers are radio-isotopes, 
they decay by emitting a positron which annihilates almost immediately with an electron. The resulting emission of two photons is detected and, due to the radioactive decay, 
the measured data can be modeled as an inhomogeneous Poisson process with a mean given by the X-ray transform of the spatial tracer distribution (cf., e.g., \cite{Natterer2001,Vardi1985}). 
Note that, up to notation, the X-ray transform coincides with the more popular Radon transform in the two dimensional case \cite{Natterer2001}. Thus, the underlying reconstruction problem can be modeled as
\begin{equation}
	\label{eq:PETReconProblem}
	\sum_{m = 1}^M \bigl( (Ku)_m - f_m \log((Ku)_m) \bigr) + \alpha R(u) \rightarrow \min_{u \geq 0}, \qquad \alpha > 0,
\end{equation}
where $M$ is the number of measurements, $f$ are the given data, and $K$ is the system matrix which describes the full properties of the PET data acquisition.

To solve \eqref{eq:PETReconProblem}, algorithms discussed above can be applied and several of them have been already studied for PET recently. In the following, 
we will give a (certainly incomplete) performance discussion of different first order splitting algorithms on synthetic PET data and highlight the strengths 
and weaknesses of them which could be carried over to many other imaging applications. For the study below, the total variation (TV) 
was applied as regularization energy $R$ in \eqref{eq:PETReconProblem} and the following algorithms and parameter settings were used for the performance evaluation:
\begin{itemize}
	\item \emph{\textbf{FB-EM-TV:}} The FB-EM-TV algorithm \cite{Sawatzky2013} represents an instance of the proximal 
forward-backward (FB) splitting algorithm discussed in Section \ref{sec:FBalg} using a variable metric strategy \eqref{eq:VarMetricStrategy}. 
The preconditioned matrices $Q^{(r)}$ in \eqref{eq:VarMetricStrategy} are chosen in a way that the gradient descent step corresponds to an expectation-maximization (EM) reconstruction step. 
The EM algorithm is a classically applied (iterative) reconstruction method in emission tomography \cite{Natterer2001,Vardi1985}. 
The TV proximal problem was solved by an adopted variant of the modified Arrow-Hurwicz method proposed in \cite{CP11} since it was shown to be the most efficient method 
for TV penalized weighted least-squares denoising problems in \cite{Sawatzky2014a}. Furthermore, a warm starting strategy was used to initialize the dual variables 
within the TV proximal problem and the inner iteration sequence was stopped if the relative error of primal and dual optimality conditions was below an error tolerance $\delta$, 
i.e., using the notations from \cite{CP11}, if
	\begin{equation} \label{eq:StopAHMOD}
		\max\{d^{(r)},p^{(r)}\} \leq \delta
	\end{equation}
	with
	\begin{eqnarray*}
		d^{(r)} & = & \| (y^{(r)} - y^{(r-1)}) / \sigma_{r-1} + \nabla (x^{(r)} - x^{(r-1)})\| \, / \, \|\nabla x^{(r)}\| , \\
		p^{(r)} & = & \| x^{(r)} - x^{(r-1)} \| \, / \, \| x^{(r)} \| .
	\end{eqnarray*}
	The damping parameter $\eta^{(r)}$ in \eqref{eq:VarMetricStrategy} was set to $\eta^{(r)} = 1$ as indicated in \cite{Sawatzky2013}.
	\item \emph{\textbf{FB-EM-TV-Nes83:}} A modified version of FB-EM-TV described above using the acceleration strategy proposed by Nesterov in \cite{Ne83}. 
This modification can be seen as a variant of FISTA \cite{BT09} with a variable metric strategy \eqref{eq:VarMetricStrategy}. Here, $\eta^{(r)}$ in \eqref{eq:VarMetricStrategy} 
was chosen fixed (i.e. $\eta^{(r)} = \eta$) but has to be adopted to the pre-defined inner accuracy threshold $\delta$ \eqref{eq:StopAHMOD} to guarantee the convergence 
of the algorithm and it was to be done manually.
	\item \emph{\textbf{CP-E:}} The fully explicit variant of the Chambolle-Pock's primal-dual algorithm \cite{CP11} (cf. Section \ref{subsec:pdhg}) studied 
for PET reconstruction problems in \cite{Anthoine2011} (see \emph{CP2TV} in \cite{Anthoine2011}). The dual step size $\sigma$ was set manually and the primal one corresponding to \cite{CP11} 
as $\tau \sigma (\|\nabla\|^2 + \|K\|^2) = 1$, where $\|K\|$ was pre-estimated using the Power method.
	%\item \emph{\textbf{CP-E-$\gamma$:}} The CP-E algorithm described above but using the accelerated version of the primal-dual algorithm \cite[Algorithm 2]{CP11} where the degree of uniform convexity $\gamma$ was chosen manually to tune the algorithm performance.
	\item \emph{\textbf{Precond-CP-E:}} The CP-E algorithm described above but using the diagonal preconditioning strategy proposed in \cite{CP11a} with $\alpha=1$ in \cite[Lemma 2]{CP11a}.
	\item \emph{\textbf{CP-SI:}} The semi-implicit variant of the Chambolle-Pock's primal-dual algorithm \cite{CP11} (cf. Section \ref{subsec:pdhg}) 
studied for PET reconstruction problems in \cite{Anthoine2011} (see \emph{CP1TV} in \cite{Anthoine2011}). The difference to CP-E is that a 
TV proximal problem has to be solved in each iteration step. This was performed as in case of FB-EM-TV method. Furthermore, the dual step size $\sigma$ was set manually 
and the primal one corresponding to \cite{CP11} as $\tau \sigma \|K\|^2 = 1$, where $\|K\|$ was pre-estimated using the Power method. % in the manner
	\item \emph{\textbf{Precond-CP-SI:}} The CP-SI algorithm described above but using the diagonal preconditioning strategy proposed in \cite{CP11a} 
with $\alpha=1$ in \cite[Lemma 2]{CP11a}.
	\item \emph{\textbf{PIDSplit+:}} An ADMM based algorithm (cf. Section \ref{subsec:admm}) that has been discussed for Poisson deblurring problems 
of the form \eqref{eq:PETReconProblem} in \cite{Setzer2010}. However, in case of PET reconstruction problems, the solution of a linear system of equations of the form
	\begin{equation}
		\label{eq:PIDSplitLSE}
		(I + K^\tT K + \nabla^\tT\nabla) u^{(r+1)} = z^{(r)}
	\end{equation}
has to be computed in a different way in contrast to deblurring problems. This was done by running two preconditioned conjugate gradient (PCG) iterations 
with warm starting and cone filter preconditioning whose effectiveness has been validated in \cite{Ramani2012} for X-ray CT reconstruction problems. 
The cone filter was constructed as described in \cite{Fessler1997,Fessler1999} and diagonalized by the discrete cosine transform (DCT-II) supposing Neumann boundary conditions.
%	\item \emph{\textbf{PIDSplit+($\gamma^{(r)}$):}} 
The PIDSplit+ algorithm described above can be accomplished by a strategy of adaptive augmented 
Lagrangian parameters $\gamma$ in \eqref{Lagrangian_augmented} as proposed for the PIDSplit+ algorithm in \cite{BPCPE11,Teuber2012}. 
The motivation behind this strategy 
is to mitigate the performance dependency on the initial chosen fixed parameter that may strongly influence the speed of convergence of ADMM based algorithms.
\end{itemize}

All algorithms were implemented in MATLAB and executed on a machine with 4 CPU cores, each 2.83 GHz, and 7.73 GB physical memory, running a 64 bit Linux system and MATLAB 2013b. 
The built-in multi-threading in MATLAB was disabled such that all computations were limited to a single thread. The algorithms were evaluated on a simple object 
(image size $256 \times 256$) and the synthetic 2D PET measurements were obtained via a Monte-Carlo simulation with $257$ radial and $256$ angular samples, 
using one million simulated events (see Figure \ref{fig:PETdata}). Due to sustainable image and measurement dimensions, the system matrix $K$ was pre-computed 
for all reconstruction runs. To evaluate the performance of algorithms described above, the following procedure was applied. First, since $K$ is injective 
and thus an unique minimizer of \eqref{eq:PETReconProblem} is guaranteed \cite{Sawatzky2013}, we can run a well performing method for a very long time 
to compute a "ground truth" solution $u^*_\alpha$ for a fixed $\alpha$. To this end, we have run the Precond-CP-E algorithm for 100,000 iterations 
for following reasons: (1) all iteration steps can be solved exactly such that the solution cannot be influenced by inexact computations (see discussion below); 
(2) due to preconditioning strategy, no free parameters are available those may influence the speed of convergence negatively such that $u^*_\alpha$ is expected 
to be of high accuracy after 100,000 iterations. Having $u^*_\alpha$, each algorithm was applied until the relative error
\begin{equation} \label{eq:relErr}
	\| u^{(r)}_\alpha - u^*_\alpha \| / \| u^*_\alpha \|
\end{equation}
was below a pre-defined threshold $\epsilon$ (or a maximum number of iterations adopted for each algorithm individually was reached). 
The "ground truth" solutions for three different values of $\alpha$ are shown in Figure \ref{fig:PETgroundtruth}.

\begin{figure}%
	\centering
		\includegraphics[width=.32\textwidth]{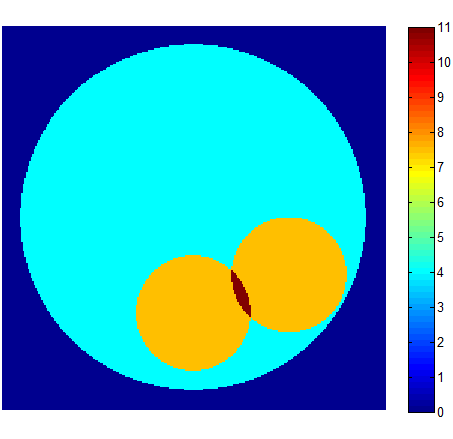}%
		\hfil
		\includegraphics[width=.32\textwidth]{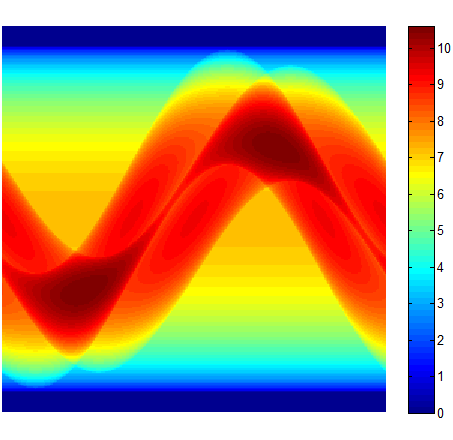}%
		\hfil
		\includegraphics[width=.32\textwidth]{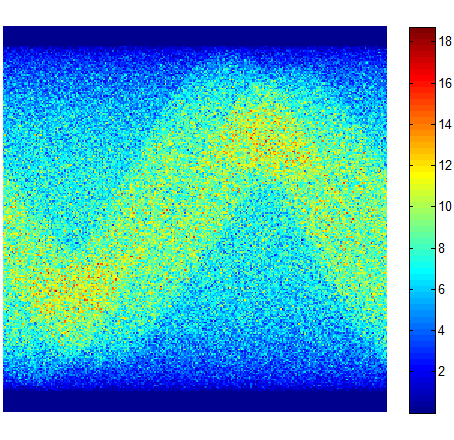}%
	\caption{Synthetic 2D PET data. \emph{Left:} Exact object. \emph{Middle:} Exact Radon data. \emph{Right:} Simulated PET measurements via a Monte-Carlo simulation using one million events.}%
	\label{fig:PETdata}%
\end{figure}

\begin{figure}%
	\centering
		\includegraphics[width=.32\textwidth]{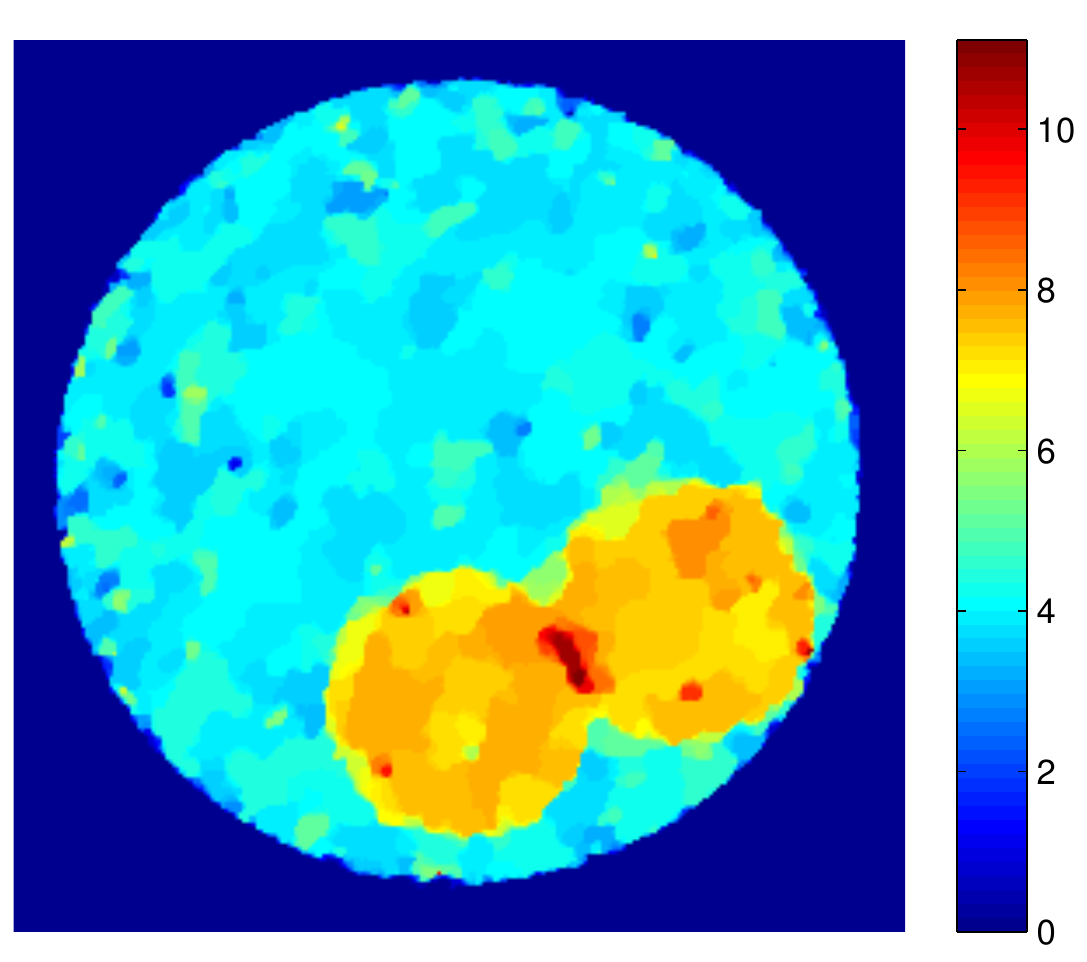}%
		\hfil
		\includegraphics[width=.32\textwidth]{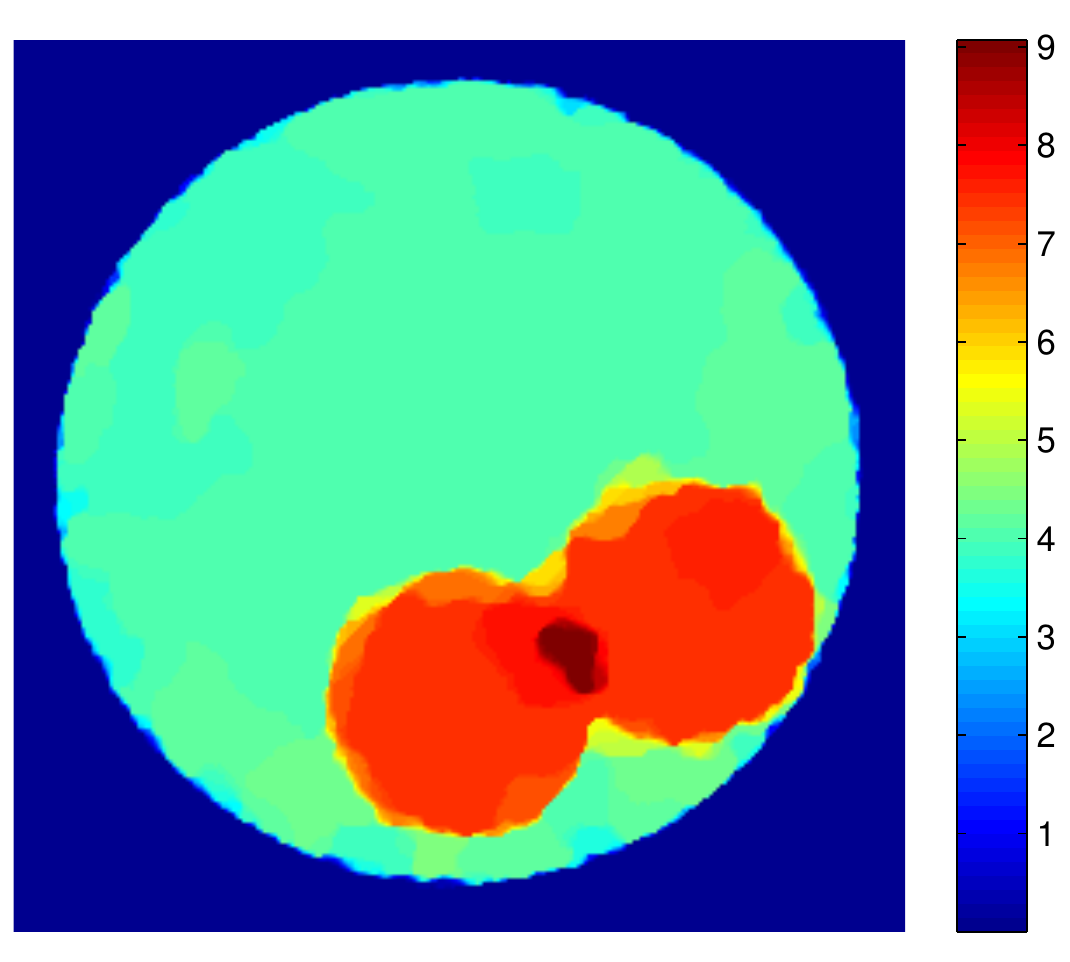}%
		\hfil
		\includegraphics[width=.32\textwidth]{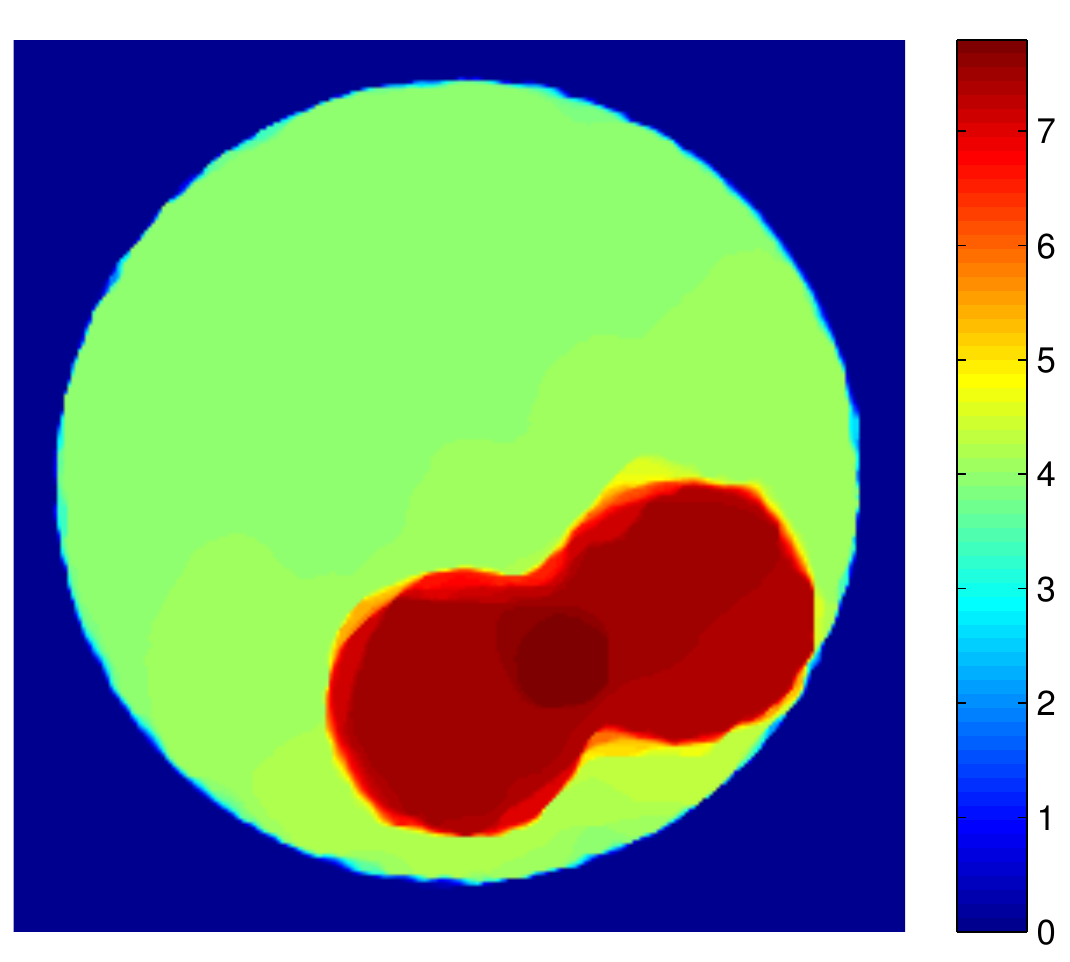}%
	\caption{The "ground truth" solutions for regularization parameter values $\alpha = 0.04$ (left), $\alpha = 0.08$ (middle), and $\alpha = 0.20$ (right).}%
	\label{fig:PETgroundtruth}%
\end{figure}

Figures \ref{fig:FBEMTV} - \ref{fig:ADMM} show the performance evaluation of algorithms plotting the propagation of the relative error \eqref{eq:relErr} 
in dependency on the number of iterations and CPU time in seconds. 
Since all algorithms have a specific set of unspecified parameters, different values of them are plotted to give a representative overall impression. 
The reason for showing the performance both in dependency on the number of iterations and CPU time is twofold: (1) in the presented case 
where the PET system matrix $K$ is pre-computed and thus available explicitly, the evaluation of forward and backward projections is nearly negligible 
and TV relevant computations have the most contribution to the run time such that the CPU time will be a good indicator for algorithm's performance; 
(2) in practically relevant cases where the forward and backward projections have to be computed in each iteration step implicitly and in general are computationally consuming, 
the number of iterations and thus the number of projection evaluations will be the crucial factor for algorithm's efficiency. In the following, we individually discuss 
the behavior of algorithms observed for the regularization parameter $\alpha = 0.08$ \eqref{eq:PETReconProblem} with the "ground truth" solution shown in Figure \ref{fig:PETgroundtruth}:
% since the computation of the TV proximal problem is the most consuming part and thus
\begin{itemize}
	\item \emph{\textbf{FB-EM-TV(-Nes83):}} The evaluation of FB-EM-TV based algorithms is shown in Figure \ref{fig:FBEMTV}. The major observation for any $\delta$ in \eqref{eq:StopAHMOD} 
is that the inexact computations of TV proximal problems lead to a restrictive approximation of the "ground truth" solution where the approximation accuracy stagnates after 
a specific number of iterations depending on $\delta$. In addition, it can also be observed that the relative error \eqref{eq:relErr} becomes better with more accurate TV proximal solutions 
(i.e. smaller $\delta$) indicating that a decreasing sequence $\delta^{(r)}$ should be used to converge against the solution of \eqref{eq:PETReconProblem} (see, e.g., \cite{Schmidt2011,Villa2013} 
for convergence analysis of inexact proximal gradient algorithms). However, as indicated in \cite{Machart2012} 
and is shown in Figure \ref{fig:FBEMTV}, the choice of $\delta$ provides a trade-off between the approximation accuracy and computational 
cost such that the convergence rates proved in \cite{Schmidt2011,Villa2013} might be computationally not optimal. 
Another observation concerns the accelerated modification FB-EM-TV-Nes83. In Figure \ref{fig:FBEMTV} we can observe that the performance of FB-EM-TV can actually 
be improved by FB-EM-TV-Nes83 regarding the number of iterations but only for smaller values of $\delta$. One reason might be that using FB-EM-TV-Nes83 
we have seen in our experiments that the gradient descent parameter $0 < \eta \leq 1$ \cite{Sawatzky2013} in \eqref{eq:VarMetricStrategy} has to be chosen smaller 
with increased TV proximal accuracy (i.e. smaller $\delta$). Since in such cases the effective regularization parameter value in each TV proximal problem is $\eta \alpha$, 
a decreasing $\eta$ will result in poorer denoising properties increasing the inexactness of TV proximal operator. 
Recently, an (accelerated) inexact variable metric proximal gradient method was analyzed in \cite{CPR2013} providing a theoretical view on such a type of methods.

\begin{figure}%
	\centering
		\includegraphics[width=.49\textwidth]{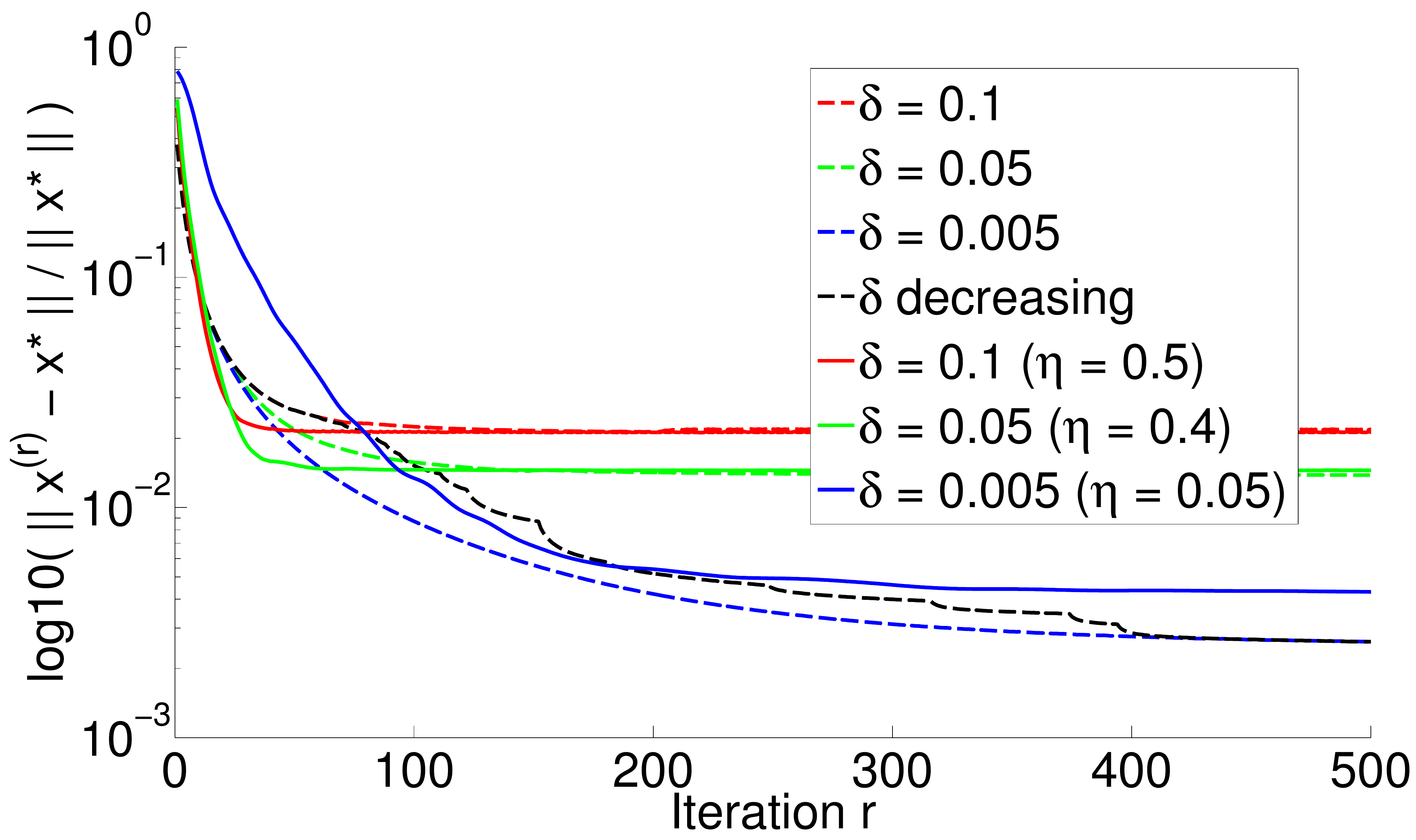}%
		\hfil
		\includegraphics[width=.49\textwidth]{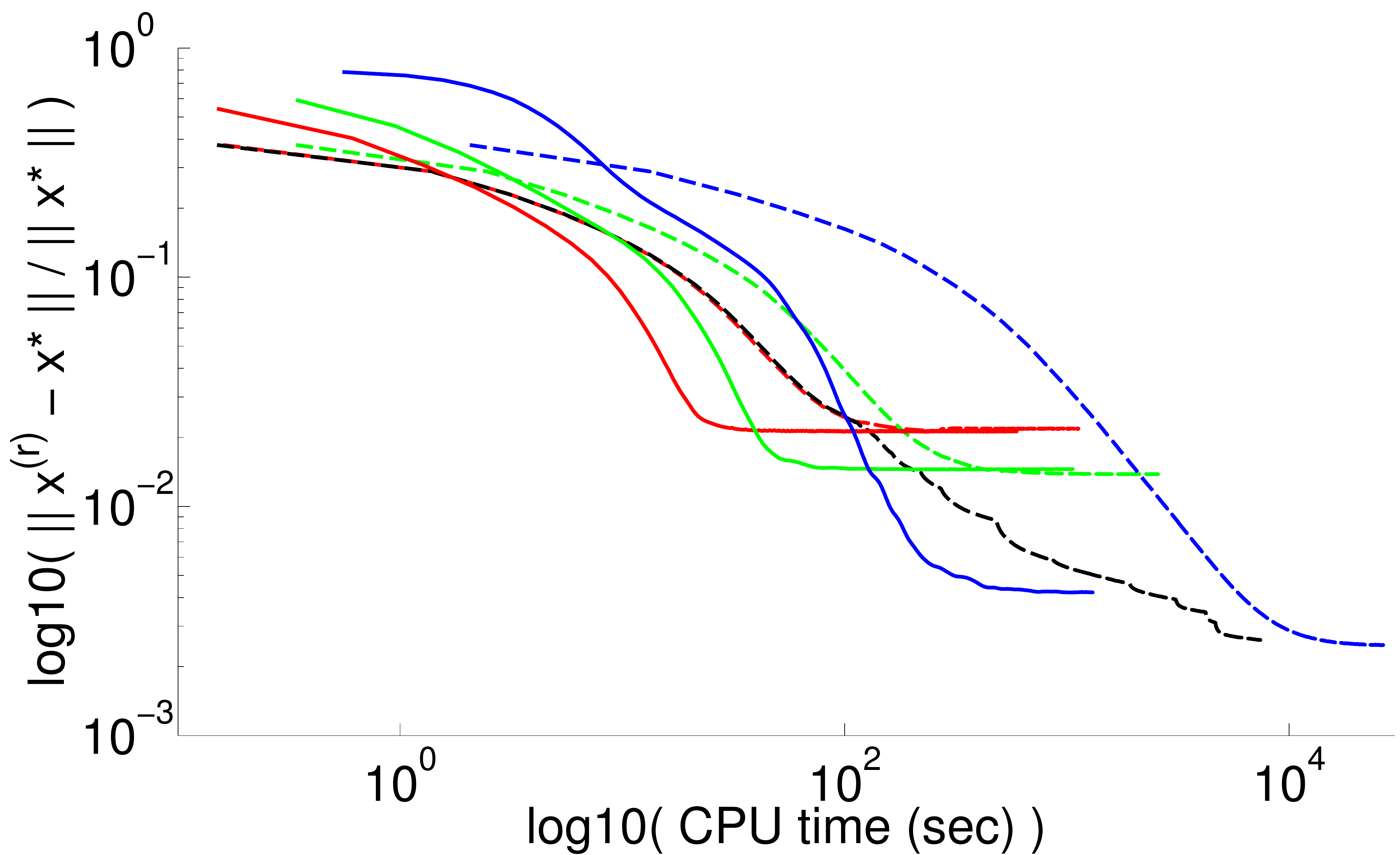}%
	\caption{Performance of FB-EM-TV (dashed lines) and FB-EM-TV-Nes83 (solid lines) for different accuracy thresholds $\delta$ \eqref{eq:StopAHMOD} within the TV proximal step. 
Evaluation of relative error \eqref{eq:relErr} 
is shown as a function of the number of iterations (left) and CPU time in seconds (right).}%
	\label{fig:FBEMTV}%
\end{figure}

	\item \emph{\textbf{(Precond-)CP-E:}} In Figure \ref{fig:CPE}, the algorithms CP-E and Precond-CP-E are evaluated. In contrast to FB-EM-TV-(Nes83), 
the approximated solution cannot be influenced by inexact computations such that a decaying behavior of relative error can be observed. 
The single parameter that affects the convergence rate is the dual steplength $\sigma$ and we observe in Figure \ref{fig:CPEa} 
that some values yield a fast initial convergence (see, e.g., $\sigma = 0.05$ and $\sigma = 0.1$), but are less suited to achieve fast asymptotic convergence and vice versa 
(see, e.g., $\sigma = 0.3$ and $\sigma = 0.5$). However, the plots in Figure \ref{fig:CPEb} indicate that $\sigma \in [0.2,0.3]$ may provide an acceptable trade-off between initial 
and asymptotic convergence in terms of the number of iterations and CPU time. 
Regarding the latter mentioned aspect we note that in case of CP-E the more natural setting of $\sigma$ would be $\sigma = \sqrt{\|\nabla\|^2 + \|K\|^2}$ what is approximately $0.29$ in our experiments providing acceptable trade-off between initial and asymptotic convergence. Finally, no acceleration was observed in case of Precond-CP-E algorithm due to the regular structure of linear operators $\nabla$ and $K$ in our experiments such that the performance is comparable to CP-E with $\sigma = 0.5$ (see Figure \ref{fig:CPEa}). 
	
\begin{figure}%
	\centering
	\begin{subfigure}{\textwidth}
		\includegraphics[width=.49\textwidth]{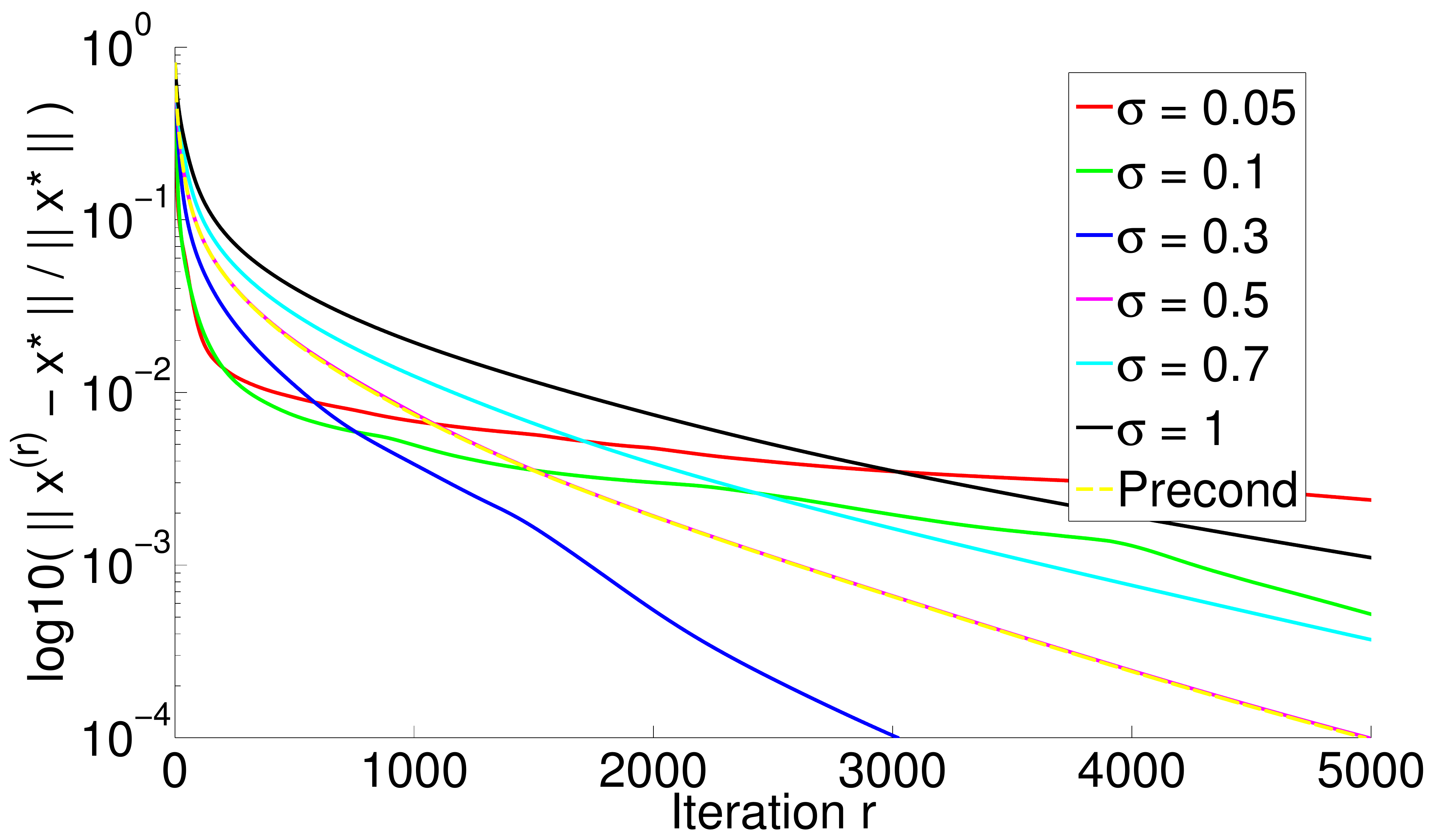}%
		\hfil
		\includegraphics[width=.49\textwidth]{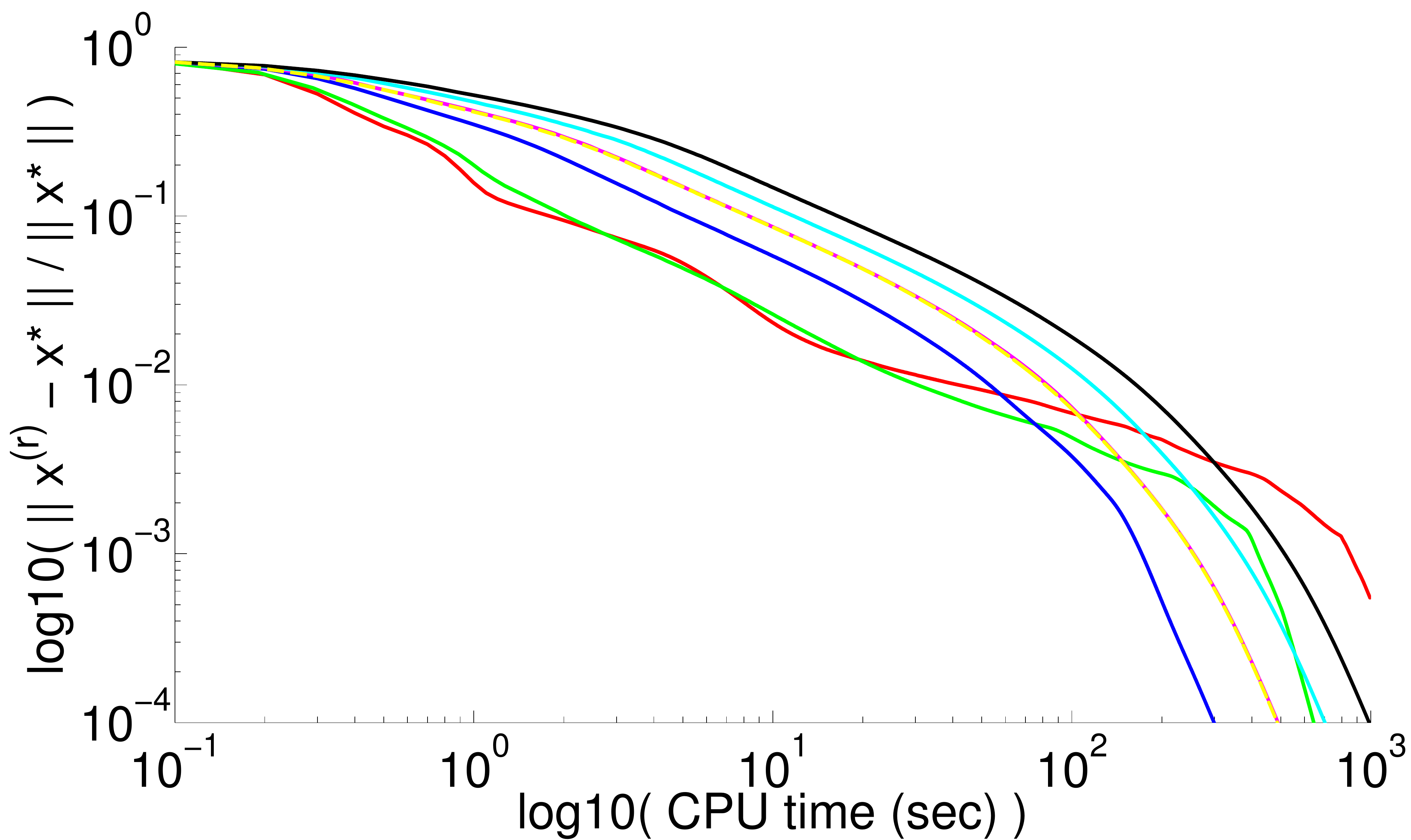}%
		\caption{Evaluation of relative error for fixed dual step sizes $\sigma$.}
		\label{fig:CPEa}%
	\end{subfigure}
	\\
	\begin{subfigure}{\textwidth}
		\includegraphics[width=.49\textwidth]{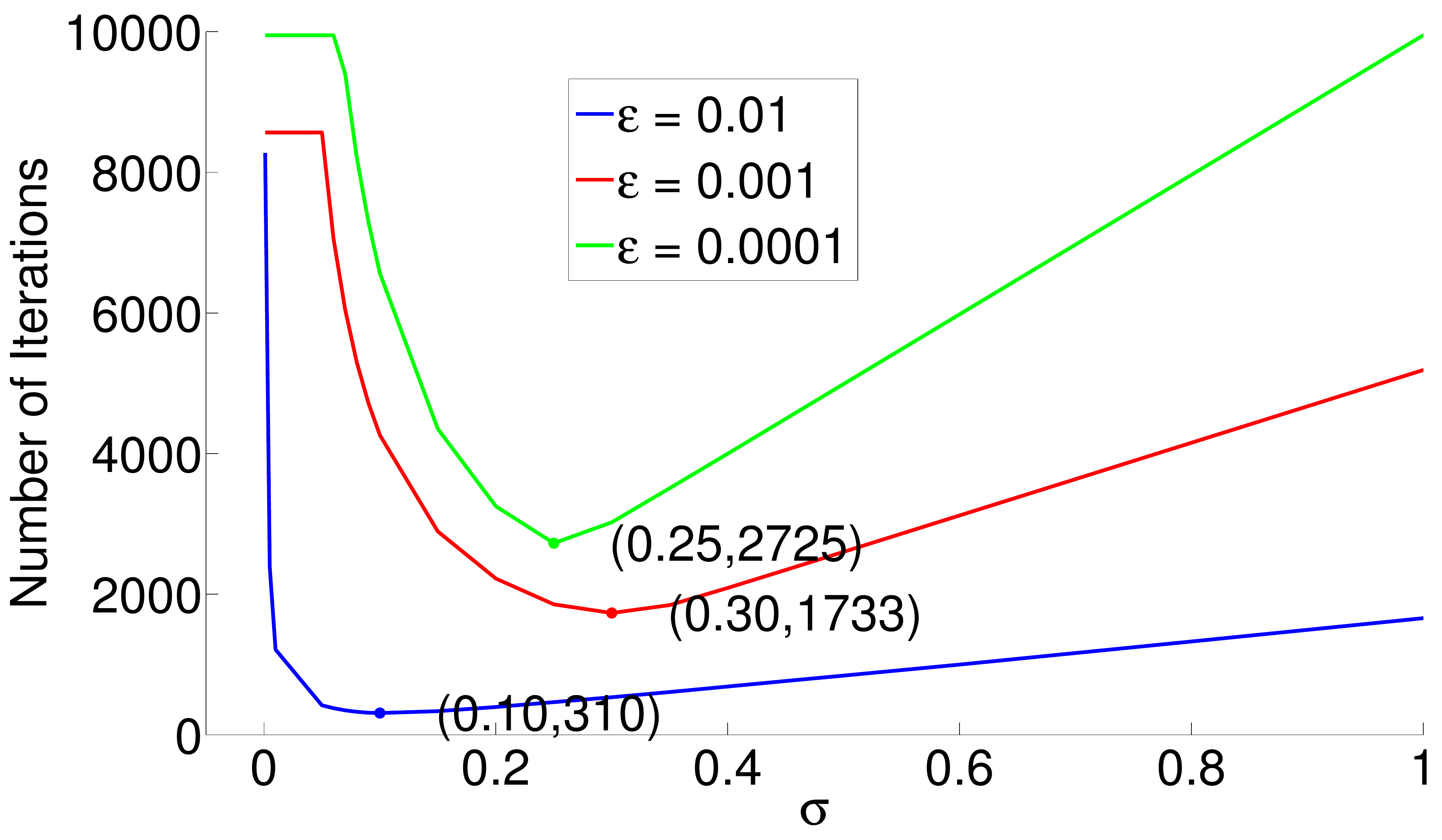}%
		\hfil
		\includegraphics[width=.49\textwidth]{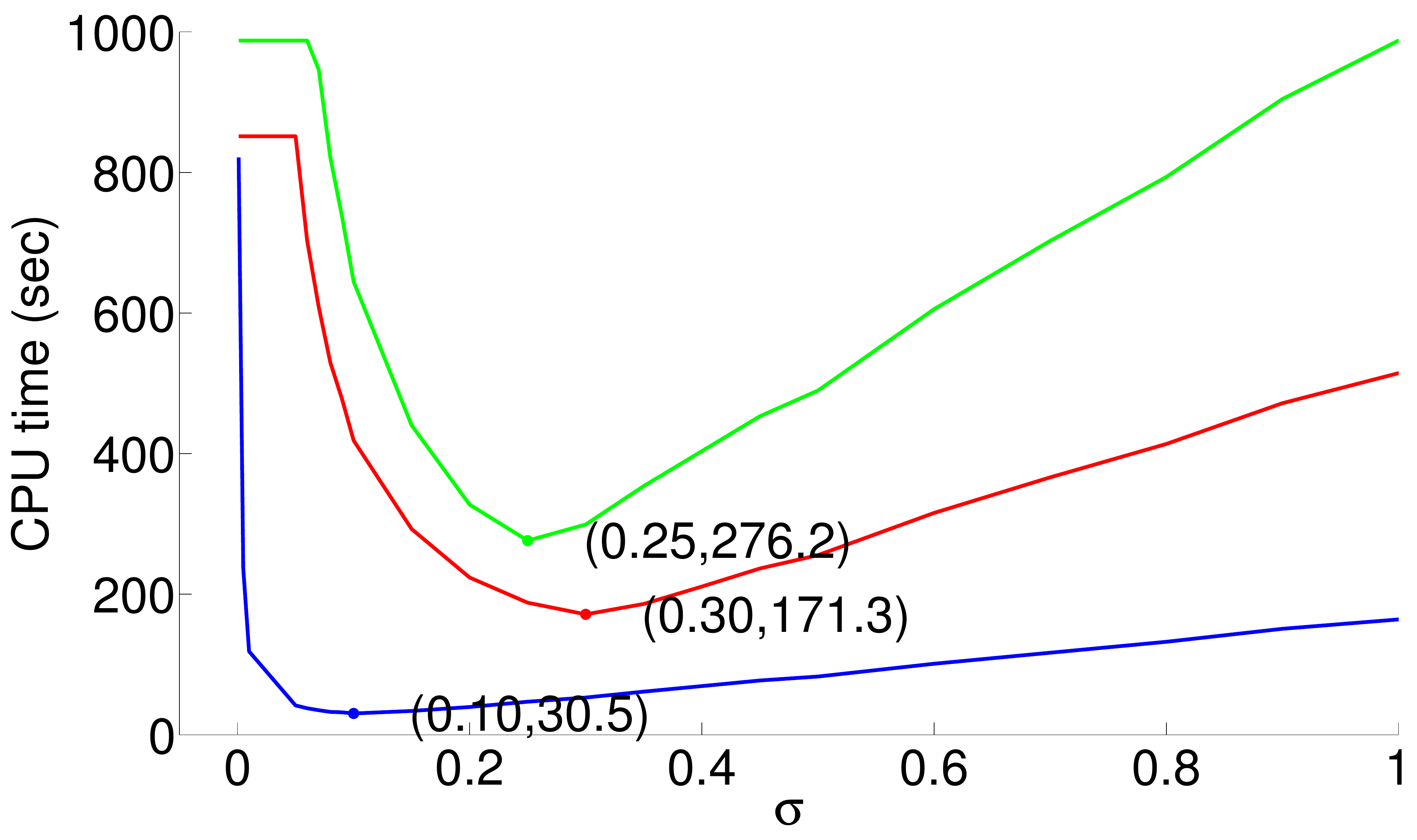}%
		\caption{Performance to get the relative error below the threshold $\epsilon$ as a function of dual step size $\sigma$.}
		\label{fig:CPEb}%
	\end{subfigure}
	\caption{Performance of Precond-CP-E (dashed lines in (\subref{fig:CPEa})) and CP-E (solid lines) for different dual step sizes $\sigma$. (\subref{fig:CPEa}) Evaluation 
of relative error as a function of the number of iterations (left) 
and CPU time in seconds (right). (\subref{fig:CPEb}) Required number of iterations (left) 
and CPU time (right) to get the relative error below a pre-defined threshold $\epsilon$ as a function of $\sigma$.}%
	\label{fig:CPE}%
\end{figure}

	\item \emph{\textbf{(Precond-)CP-SI:}} In Figures \ref{fig:CPSI} and \ref{fig:PrecondSPSI}, 
the evaluation of CP-SI and Precond-CP-SI is presented. Since a TV proximal operator has to be approximated in each iteration step, 
the same observations can be made as in case of FB-EM-TV that depending on $\delta$ the relative error stagnates after a specific number 
of iterations and that the choice of $\delta$ provides a trade-off between approximation accuracy and computational time (see Figure \ref{fig:CPSI} for Precond-CP-SI). 
In addition, since the performance of CP-SI not only depends on $\delta$ but also on the dual steplength $\sigma$, the evaluation of CP-SI for different values of $\sigma$ 
and two stopping values $\delta$ is shown in Figure \ref{fig:PrecondSPSI}. The main observation is that for smaller $\sigma$ a better initial convergence can be achieved in terms of the number 
of iterations but results in less efficient performance regarding the CPU time. The reason is that the effective regularization parameter within the TV proximal problem is $\tau\alpha$ 
(see \eqref{eq:VarMetricStrategy}) with $\tau = (\sigma \|K\|^2)^{-1}$ and a decreasing $\sigma$ leads to an increasing TV denoising effort. Thus, in practically relevant cases, 
$\sigma$ should be chosen optimally in a way balancing the required number of iterations and TV proximal computation.

\begin{figure}%
	\centering
		\includegraphics[width=.49\textwidth]{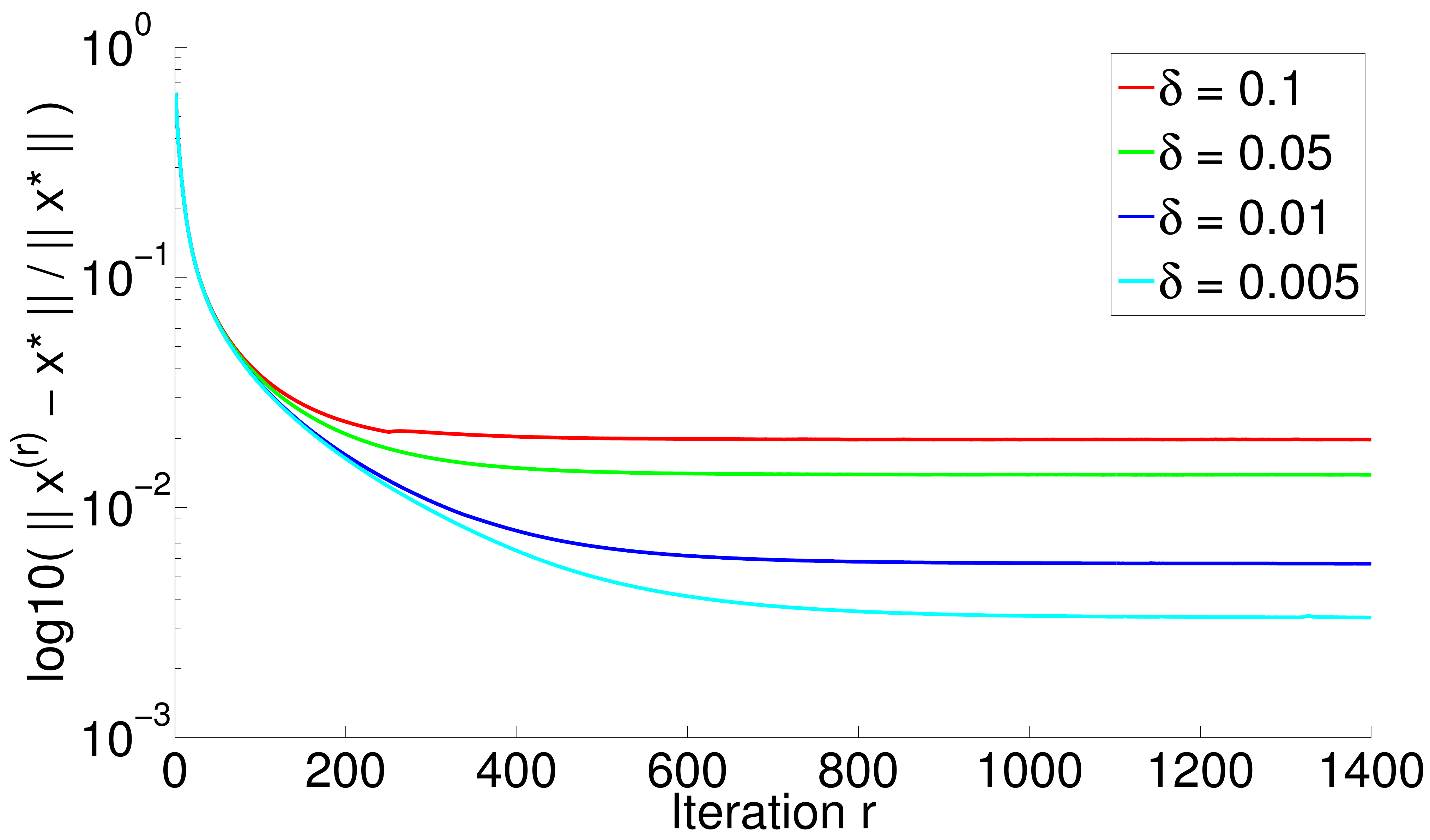}%
		\hfil
		\includegraphics[width=.49\textwidth]{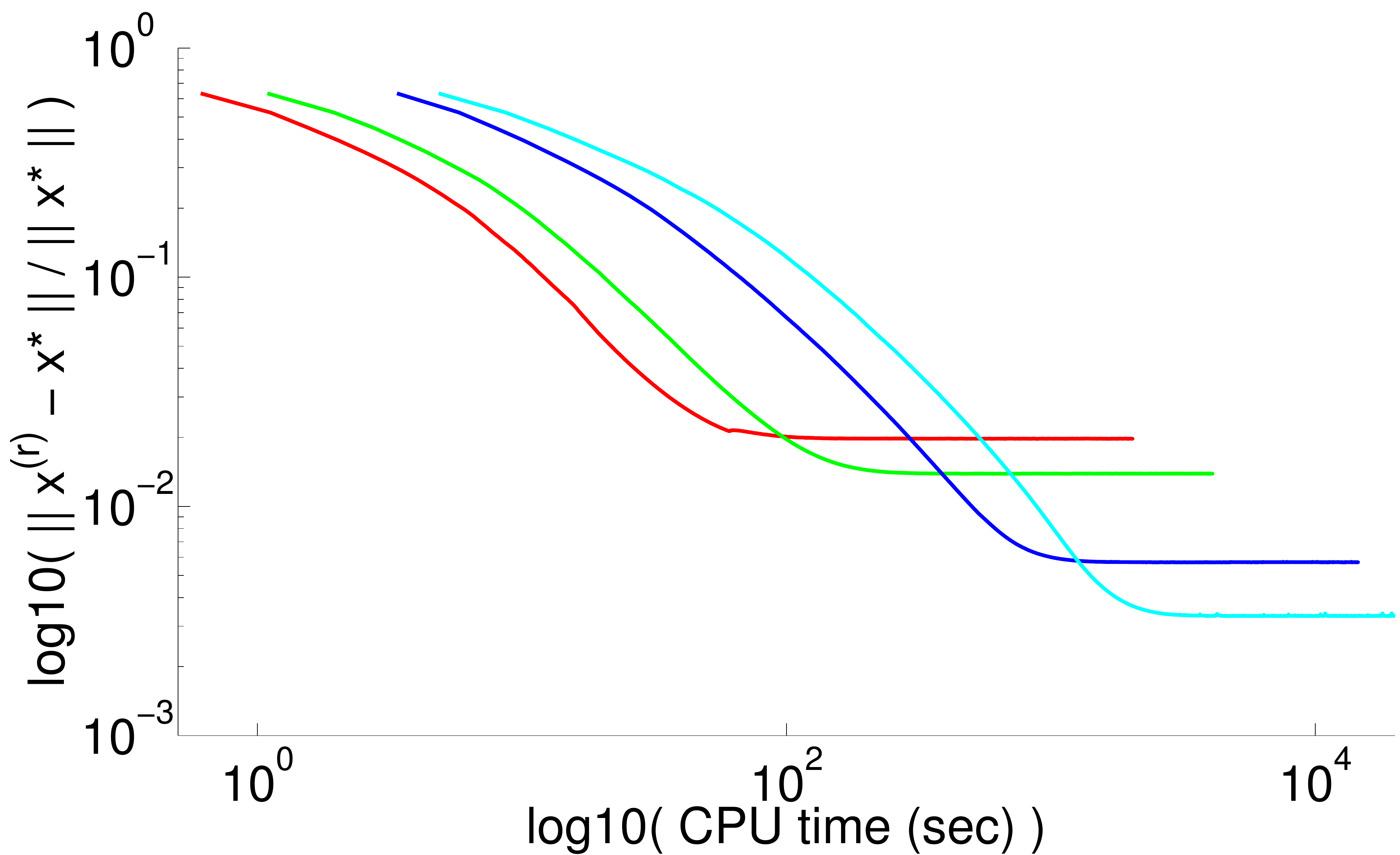}%
	\caption{Performance of Precond-CP-SI for different accuracy thresholds $\delta$ \eqref{eq:StopAHMOD} within the TV proximal step. 
Evaluation of relative error as a function of number of iterations (left) and CPU time in seconds (right).}%
	\label{fig:CPSI}%
\end{figure}

\begin{figure}%
	\centering
	\begin{subfigure}{\textwidth}
		\includegraphics[width=.49\textwidth]{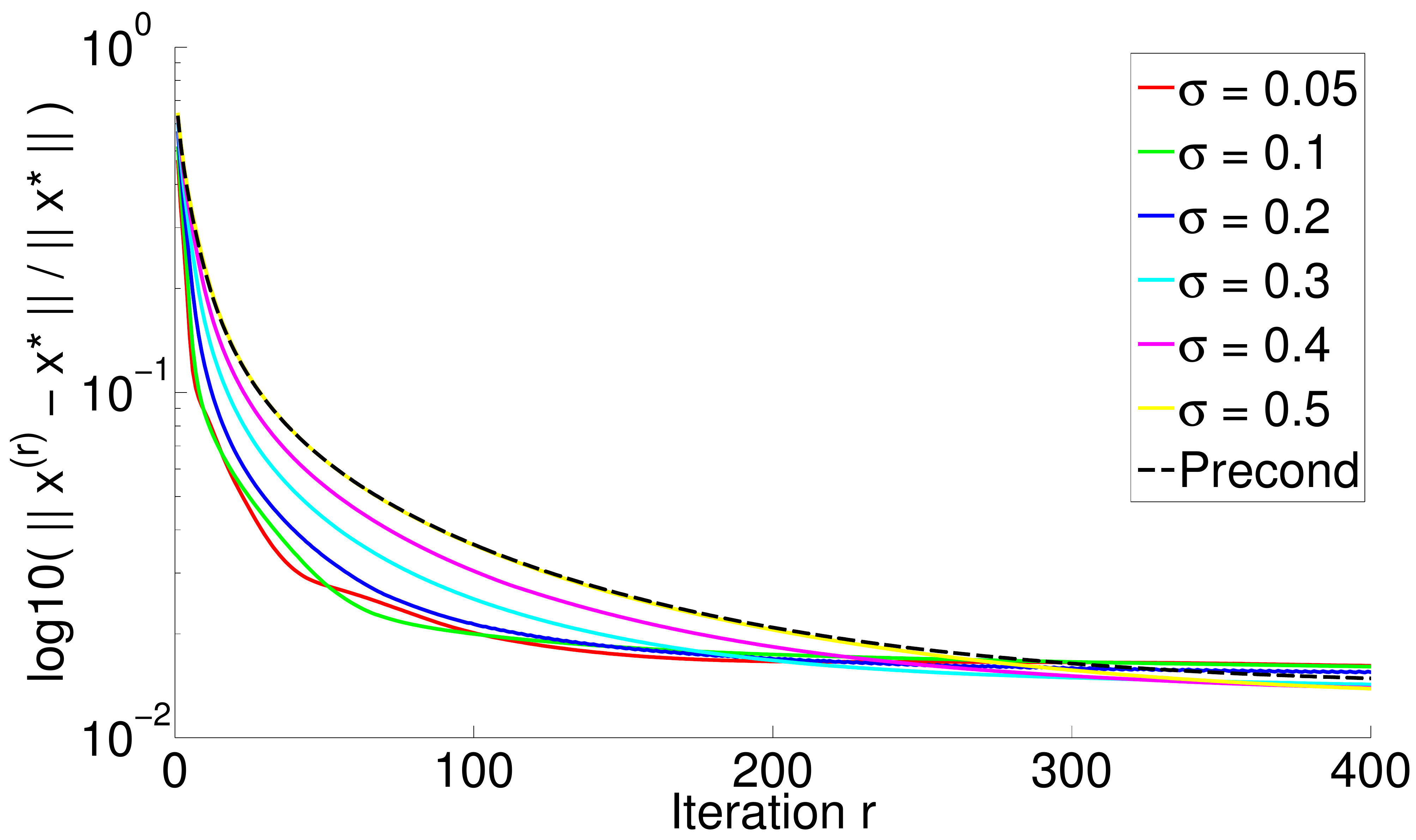}%
		\hfil
		\includegraphics[width=.49\textwidth]{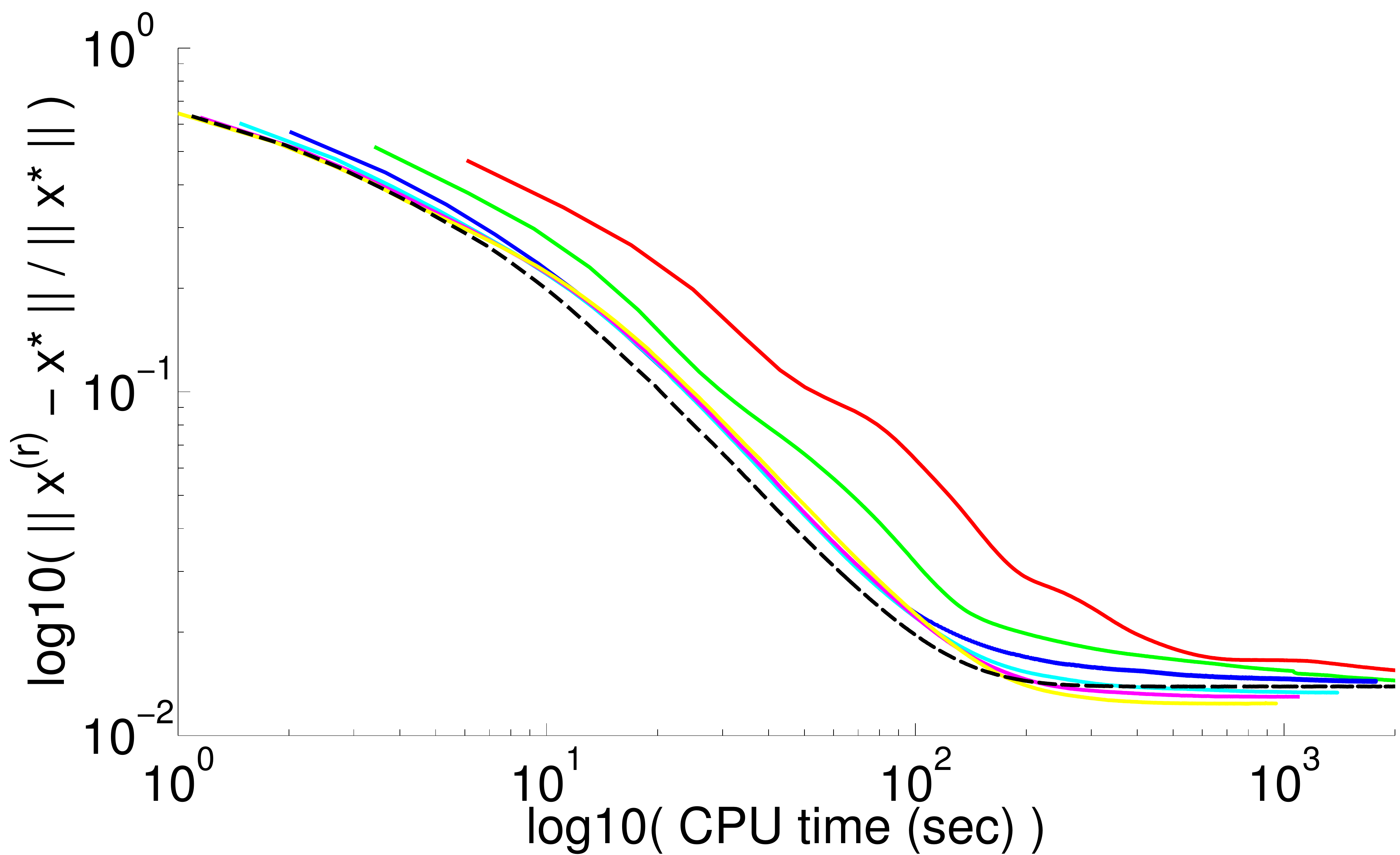}%
		\caption{Performance with TV proximal accuracy $\delta = 0.05$ \eqref{eq:StopAHMOD}.}
		\label{fig:PrecondSPSIa}
	\end{subfigure}
	\\
	\begin{subfigure}{\textwidth}
		\includegraphics[width=.49\textwidth]{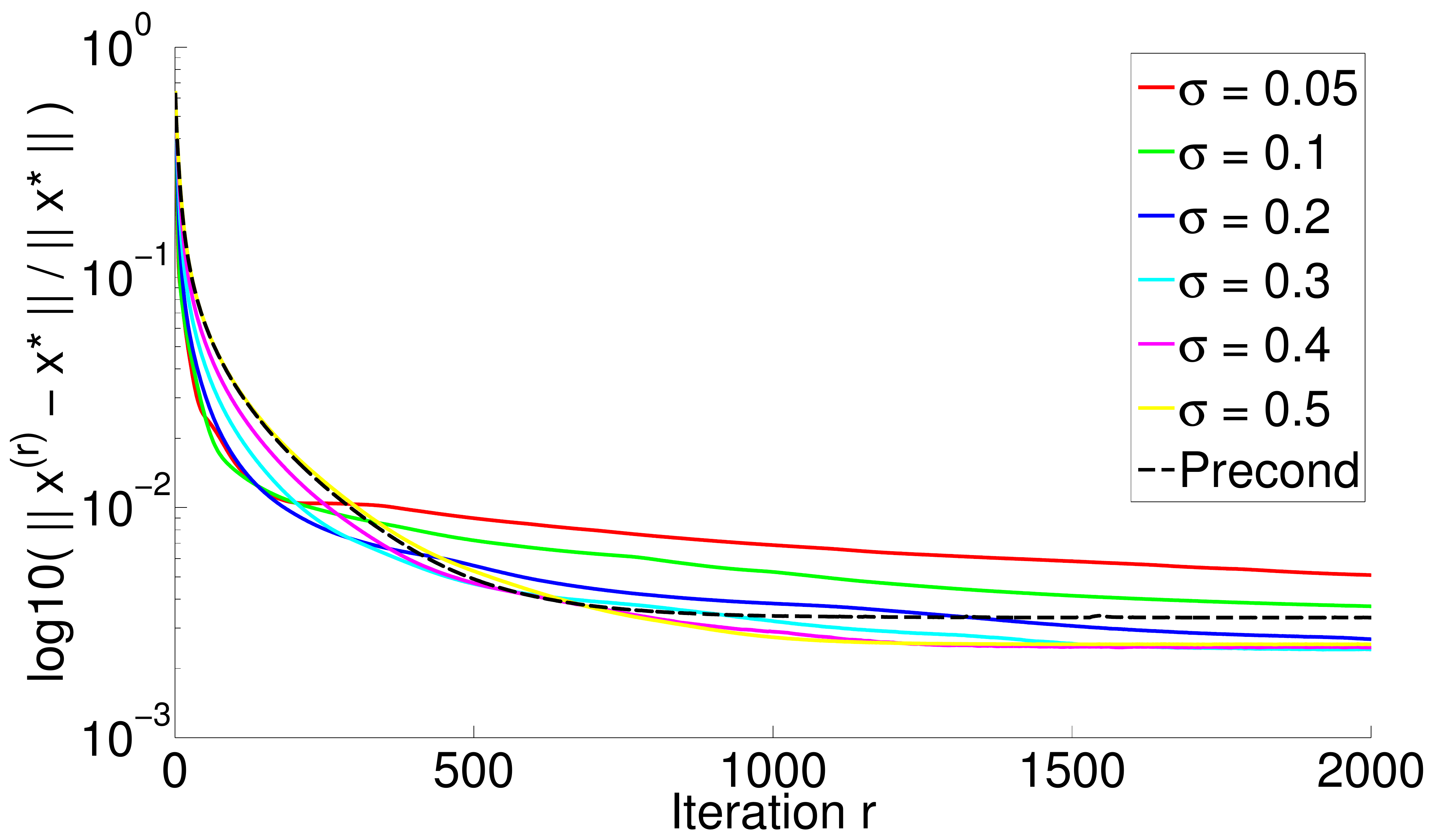}%
		\hfil
		\includegraphics[width=.49\textwidth]{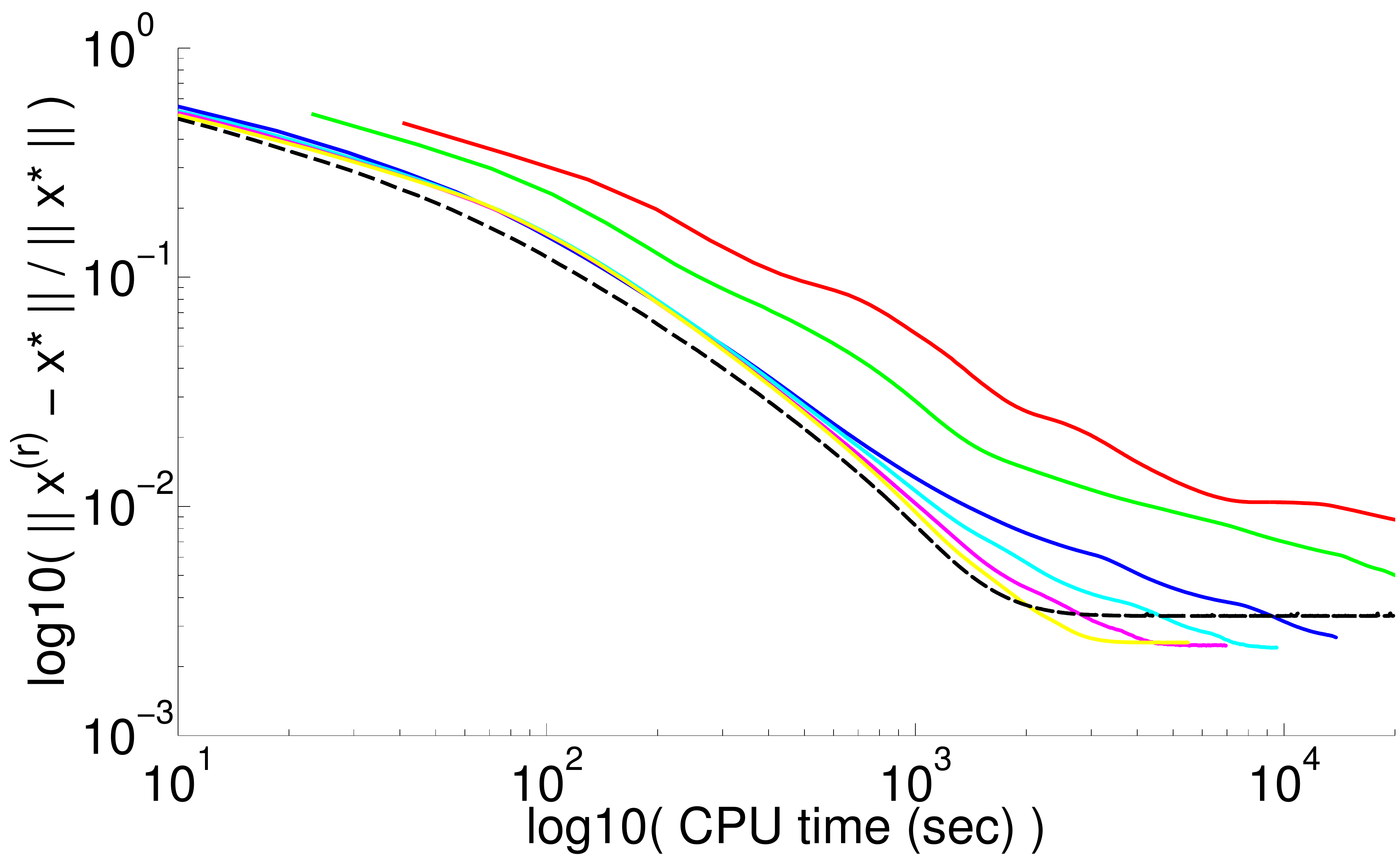}%
		\caption{Performance with TV proximal accuracy $\delta = 0.005$ \eqref{eq:StopAHMOD}.}
		\label{fig:PrecondSPSIb}
	\end{subfigure}
	\caption{Performance of Precond-CP-SI (dashed lines) and CP-SI (solid lines) for different dual step sizes $\sigma$.
Evaluation of relative error as a function of number of iterations (left) and CPU time in seconds (right) for accuracy thresholds $\delta = 0.05$ (\subref{fig:PrecondSPSIa}) 
and $\delta = 0.005$ (\subref{fig:PrecondSPSIb}) within the TV proximal problem.}%
	\label{fig:PrecondSPSI}%
\end{figure}

	\item \emph{\textbf{PIDSplit+:}} In Figure \ref{fig:ADMM} the performance of PIDSplit+ is shown. 
It is well evaluated that the convergence of ADMM based algorithms is strongly dependent on the augmented Lagrangian parameter $\gamma$ \eqref{Lagrangian_augmented} 
and that some values yield a fast initial convergence but are less suited to achieve a fast asymptotic convergence and vice versa. This behavior can also be observed 
in Figure \ref{fig:ADMM} (see $\gamma = 30$ in upper row).

%	\item \emph{\textbf{PIDSplit+($\gamma^{(r)}$):}} In Figure \ref{fig:ADMM} the performance of PIDSplit+ and PIDSplit+($\gamma^{(r)}$) is shown. 
%It is well evaluated that the convergence of ADMM based algorithms is strongly dependent on the augmented Lagrangian parameter $\gamma$ \eqref{Lagrangian_augmented} 
%and that some values yield a fast initial convergence but are less suited to achieve a fast asymptotic convergence and vice versa. This behavior can also be observed 
%in Figure \ref{fig:ADMM} (see $\gamma = 30$ in upper row). To mitigate the performance dependency on initially fixed chosen $\gamma$, the augmented Lagrangian parameter 
%is adjusted within the PIDSplit+($\gamma^{(r)}$) algorithm and their performance is shown in Figure \ref{fig:ADMM} (lower row). In some cases, the efficiency of PIDSplit+ 
%can be improved (see $\gamma = 30$) but, in contrast to results shown in \cite{Teuber2012}, the PIDSplit+($\gamma^{(r)}$) algorithm even diverged in some cases (see $\gamma^{(0)} = 1$). 
%One possible explanation could be that the results presented in \cite{Teuber2012} were shown for debluring problems where the image and data space coincide in contrast to PET reconstruction problems.

\begin{figure}
	\centering
	\includegraphics[width=.49\textwidth]{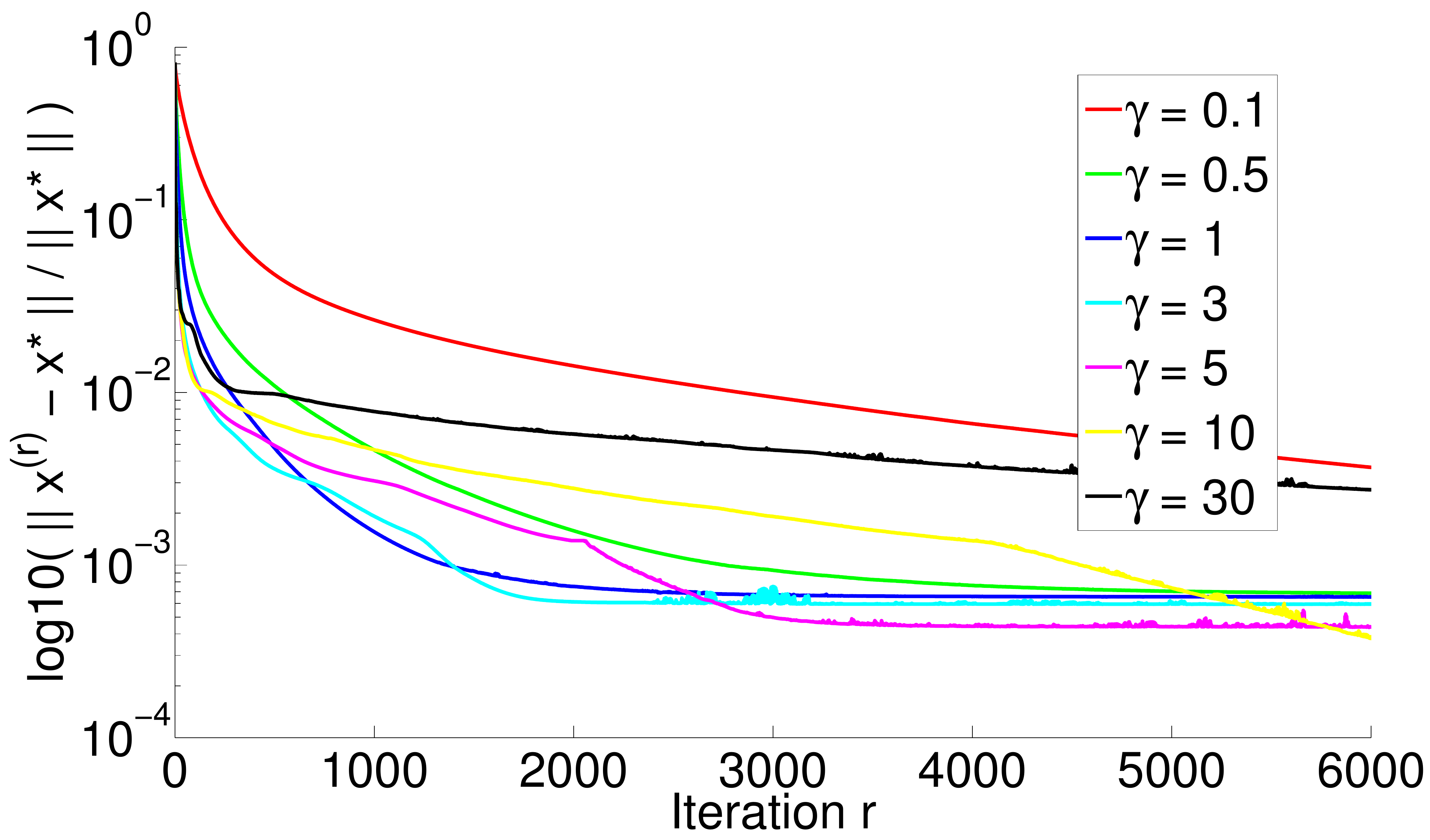}%
	\hfil
	\includegraphics[width=.49\textwidth]{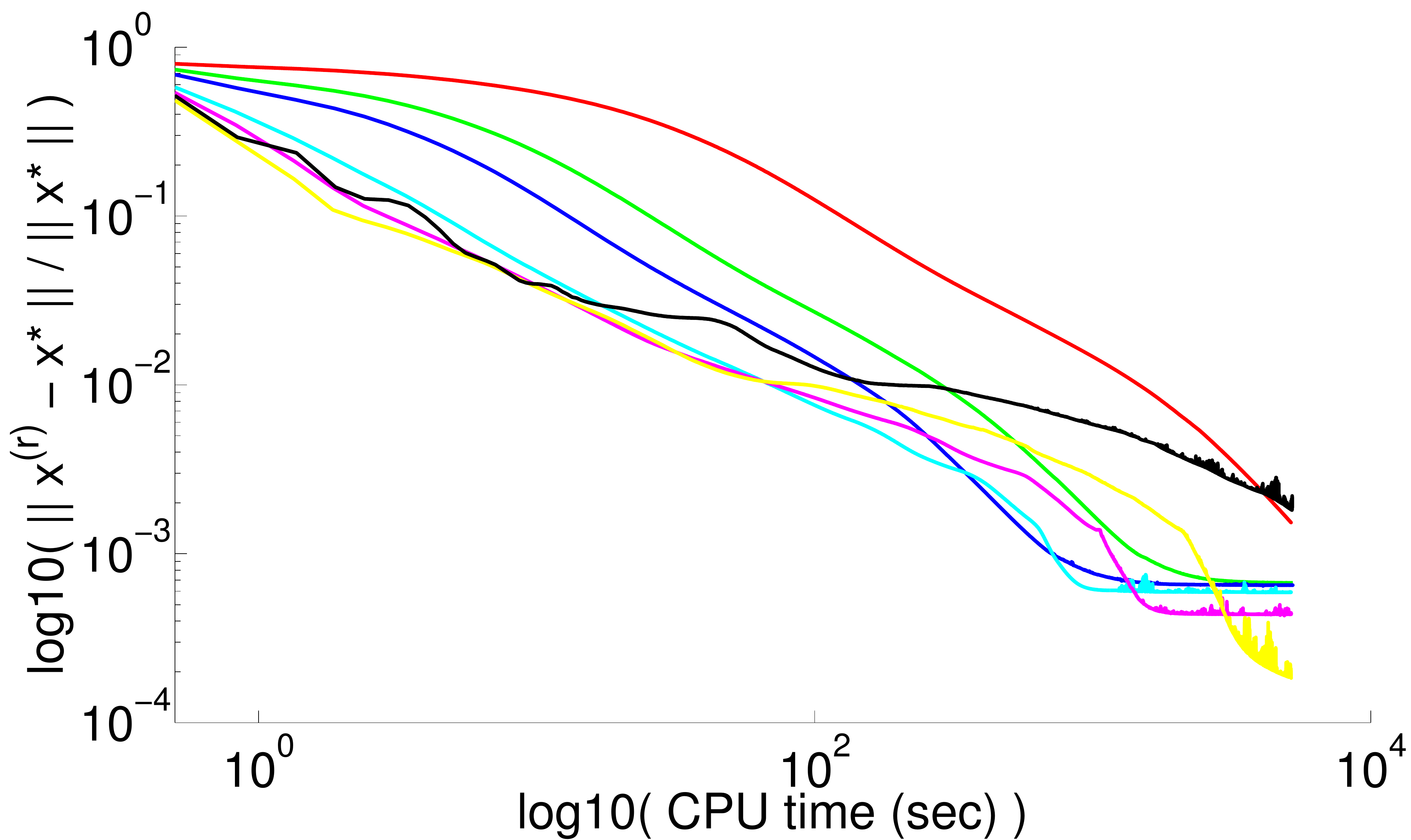}%
	%\\
	%\includegraphics[width=.49\textwidth]{ADMMVarGam_its}%
	%\hfil
	%\includegraphics[width=.49\textwidth]{ADMMVarGam_cpu}%
	\caption{Performance of PIDSplit+ for fixed augmented Lagrangian penalty parameters $\gamma$ \eqref{Lagrangian_augmented}. Evaluation of relative error as a function of number of iterations (left) and CPU time in seconds (right).}%
	%\caption{Performance of PIDSplit+ for fixed augmented Lagrangian penalty parameters $\gamma$ \eqref{Lagrangian_augmented} (upper row) and PIDSplit+($\gamma^{(r)}$) for different $\gamma$ 
%starting values (lower row). Evaluation of relative error as a function of number of iterations (left) and CPU time in seconds (right).}%
	\label{fig:ADMM}%
\end{figure}

\end{itemize}

Finally, to get a feeling how the algorithms perform against each other, the required CPU time and number of projection evaluations 
to get the relative error \eqref{eq:relErr} below a pre-defined threshold are shown in Table \ref{tab:PET1} for two different values of $\epsilon$. 
The following observations can be made:
\begin{itemize}
	\item The FB-EM-TV based algorithms are competitive in terms of required number of projection evaluations but have a higher CPU time due to the computation of TV proximal operators, 
in particular the CPU time strongly grows with decreasing $\epsilon$ since TV proximal problems have to be approximated with increased accuracy. However, in our experiments, a fixed $\delta$ 
was used in each TV denoising step and thus the performance can be improved utilizing the fact that a rough accuracy is sufficient at the beginning of the iteration sequence without 
influencing the performance regarding the number of projector evaluations negatively (cf. Figure \ref{fig:FBEMTV}). Thus, a proper strategy to iteratively decrease $\delta$ in \eqref{eq:StopAHMOD}  can strongly improve the performance of FB-EM-TV based algorithms.
	\item The CP-E algorithm is optimal in our experiments in terms of CPU time since the TV regularization is computed by the shrinkage operator 
and thus is simply to evaluate. However, this algorithm needs almost the highest number of projection evaluations that will result in a slow algorithm in practically relevant cases 
where the projector evaluations are highly computationally expansive.
	\item The PIDSplit+ algorithm is slightly poorer in terms of CPU time than CP-E but required a smaller number of projector evaluations. 
However, we remind that this performance may probably be improved since two PCG iterations were used in our experiments and thus 
two forward and backward projector evaluations are required in each iteration step of PIDSplit+ method. Thus, if only one PCG step is used, 
the CPU time and number of projector evaluations can be decreased leading to a better performing algorithm. However, in the latter case, 
the total number of iteration steps might be increased since a poorer approximation of \eqref{eq:PIDSplitLSE} will be performed if only one PCG step is used. 
Another opportunity to improve the performance of PIDSplit+ algorithm is to use the proximal ADMM strategy described in Section \ref{subsec:proximal_admm}, 
namely, to remove $K^\tT K$ from \eqref{eq:PIDSplitLSE}. That will result in only a single evaluation of forward and backward projectors in each iteration step 
but may lead to an increased number of total number of algorithm iterations.
\end{itemize}

\begin{table}
	\caption{Performance evaluation of algorithms described above for $\alpha = 0.08$ (see $u^*_\alpha$ in Figure \ref{fig:PETgroundtruth} (middle)). 
The table displays the CPU time in seconds and required number of forward and backward projector evaluations ($K/K^\tT$) to get the relative error
\eqref{eq:relErr} below the error tolerance $\epsilon$. For each algorithm the best performance regarding the CPU time and $K/K^\tT$ evaluations are shown where $\shortparallel$ means that the value coincides with the value directly above.}
	\centering
		\begin{tabular}{ll*{2}{c}p{1em}*{2}{c}}
			\hline\noalign{\smallskip}
				& & \multicolumn{2}{c}{$\epsilon = 0.05$} & & \multicolumn{2}{c}{$\epsilon = 0.005$} \\
				\noalign{\smallskip}\cline{3-4}\cline{6-7}\noalign{\smallskip}
				& & $K/K^\tT$ & CPU & & $K/K^\tT$ & CPU \\
			\noalign{\smallskip}\hline\noalign{\smallskip}
			FB-EM-TV & (best $K/K^\tT$) \hspace{0.5em} & 20 & 40.55 & & \textbf{168} & 4999.71 \\ \noalign{\smallskip}
			\quad $\shortparallel$ & (best CPU) & $\shortparallel$ & $\shortparallel$ & & 230 & 3415.74 \\ \noalign{\smallskip}
			FB-EM-TV-Nes83 & (best $K/K^\tT$) & \textbf{15} & 14.68 & & 231 & 308.74 \\ \noalign{\smallskip}
			\quad $\shortparallel$ & (best CPU) & $\shortparallel$ & $\shortparallel$ & & $\shortparallel$ & $\shortparallel$ \\ \noalign{\smallskip}
			CP-E & (best $K/K^\tT$) & 48 & \textbf{4.79} & & 696 & \textbf{69.86} \\ \noalign{\smallskip}
			\quad $\shortparallel$ & (best CPU) & $\shortparallel$ & $\shortparallel$ & & $\shortparallel$ & $\shortparallel$ \\ \noalign{\smallskip}
			CP-SI & (best $K/K^\tT$) & 22 & 198.07 & & 456 & 1427.71 \\ \noalign{\smallskip}
			\quad $\shortparallel$ & (best CPU) & 25 & 23.73 & & 780 & 1284.56 \\ \noalign{\smallskip}
			PIDSplit+ & (best $K/K^\tT$) & 30 & 7.51 & & 698 & 179.77 \\ \noalign{\smallskip}
			\quad $\shortparallel$ & (best CPU) & $\shortparallel$ & $\shortparallel$ & & $\shortparallel$ & $\shortparallel$ 
                        %\\ \noalign{\smallskip}
			%PIDSplit+($\gamma^{(r)}$) & (best $K/K^\tT$) & 30 & 8.25 & & 662 & 187.12 \\ \noalign{\smallskip}
			%$\shortparallel$ & (best CPU) & $\shortparallel$ & $\shortparallel$ & & $\shortparallel$ & $\shortparallel$ \\ \noalign{\smallskip}
\\		\hline
		\end{tabular}
	\label{tab:PET1}
\end{table}

Finally, to study the algorithm's stability regarding the choice of regularization parameter $\alpha$, we have run the algorithms for two additional values of $\alpha$ 
using the parameters shown the best performance in Table \ref{tab:PET1}. The additional penalty parameters include a slightly under-smoothed and over-smoothed result 
respectively as shown in Figure \ref{fig:PETgroundtruth} and the evaluation results are shown in Tables \ref{tab:PET2} and \ref{tab:PET3}. In the following we describe the major observations:

\begin{itemize}
	\item The FB-EM-TV method has the best efficiency in terms of projector evaluations, independently from the penalty parameter $\alpha$, but has the disadvantage 
of solving a TV proximal problem in each iteration step which get harder to solve with increasing smoothing level (i.e. larger $\alpha$) leading to a negative computational time. 
The latter observation holds also for the CP-SI algorithm. In case of a rough approximation accuracy (see Table \ref{tab:PET2}), the FB-EM-TV-Nes83 scheme is able to improve 
the overall performance, respectively at least the computational time for higher accuracy in Table \ref{tab:PET3}, but here the damping parameter $\eta$ in \eqref{eq:VarMetricStrategy} 
has to be chosen carefully to ensure the convergence (cf. Table \ref{tab:PET2} and \ref{tab:PET3} in case of $\alpha = 0.2$). Additionally based on Table \ref{tab:PET1}, 
a proper choice of $\eta$ is not only dependent on $\alpha$ but also on the inner accuracy of TV proximal problems.
	\item In contrast to FB-EM-TV and CP-SI, the remaining algorithms provide a superior computational time due to the solution of TV related steps 
by the shrinkage formula but show a strongly increased requirements on projector evaluations across all penalty parameters $\alpha$. In addition, the performance 
of these algorithms is strongly dependent on the proper setting of free parameters ($\sigma$ in case of CP-E and $\gamma$ in PIDSplit+) which unfortunately 
are able to achieve only a fast initial convergence or a fast asymptotic convergence. Thus different parameter settings of $\sigma$ and $\gamma$ were used in Tables \ref{tab:PET2} and \ref{tab:PET3}.
\end{itemize}

\begin{table}
	\caption{Performance evaluation for different values of $\alpha$ (see Figure \ref{fig:PETgroundtruth}). The table displays the CPU time in seconds 
and required number of forward and backward projector evaluations ($K/K^\tT$) to get the relative error \eqref{eq:relErr} below the error tolerance $\epsilon = 0.05$. 
For each $\alpha$, the algorithms were run using the following parameters: FB-EM-TV ($\delta = 0.1$), FB-EM-TV-Nes83 ($\delta = 0.1$, $\eta = 0.5$), CP-E ($\sigma = 0.07$), 
CP-SI ($\delta = 0.1$, $\sigma = 0.05$), PIDSplit+ ($\gamma = 10$), which were chosen based on the "best" performance 
regarding $K/K^\tT$ for $\epsilon = 0.05$ in Table \ref{tab:PET1}.} % , PIDSplit+($\gamma^{(r)}$) ($\gamma^{(0)} = 10$)
	\centering
		\begin{tabular}{l*{2}{c}p{1em}*{2}{c}p{1em}*{2}{c}}
			\hline\noalign{\smallskip}
				& \multicolumn{2}{c}{$\alpha = 0.04$} & & \multicolumn{2}{c}{$\alpha = 0.08$} & & \multicolumn{2}{c}{$\alpha = 0.2$} \\
				\noalign{\smallskip}\cline{2-3}\cline{5-6}\cline{8-9}\noalign{\smallskip}
				& $K/K^\tT$ & CPU & & $K/K^\tT$ & CPU & & $K/K^\tT$ & CPU \\
			\noalign{\smallskip}\hline\noalign{\smallskip}
			FB-EM-TV & 28 & 16.53 & & 20 & 40.55 & & \textbf{19} & 105.37 \\ \noalign{\smallskip}
			FB-EM-TV-Nes83 & \textbf{17} & \textbf{5.26} & & \textbf{15} & 14.68 & & - & - \\ \noalign{\smallskip}
			CP-E & 61 & 6.02 & & 48 & \textbf{4.79} & & 51 & \textbf{5.09} \\ \noalign{\smallskip}
			CP-SI & - & - & & 25 & 23.73 & & 21 & 133.86 \\ \noalign{\smallskip}
			PIDSplit+ & 32 & 8.08 & & 30 & 7.51 & & 38 & 9.7 \\ \noalign{\smallskip}
			%PIDSplit+($\gamma^{(r)}$)	& 32 & 8.84 & & 30 & 8.25 & & 38 & 10.59 \\ \noalign{\smallskip}
		\hline
		\end{tabular}
	\label{tab:PET2}
\end{table}

\begin{table}
	\caption{Performance evaluation for different values of $\alpha$ (see Figure \ref{fig:PETgroundtruth}) as in Table \ref{tab:PET2} 
but for $\epsilon = 0.005$ and using the following parameters: FB-EM-TV ($\delta = 0.005$), FB-EM-TV-Nes83 ($\delta = 0.005$, $\eta = 0.05$), CP-E ($\sigma = 0.2$), 
CP-SI ($\delta = 0.005$, $\sigma = 0.3$), PIDSplit+ ($\gamma = 3$).} % , PIDSplit+($\gamma^{(r)}$) ($\gamma^{(0)} = 1$)
	\centering
		\begin{tabular}{l*{2}{c}p{1em}*{2}{c}p{1em}*{2}{c}}
			\hline\noalign{\smallskip}
				& \multicolumn{2}{c}{$\alpha = 0.04$} & & \multicolumn{2}{c}{$\alpha = 0.08$} & & \multicolumn{2}{c}{$\alpha = 0.2$} \\
				\noalign{\smallskip}\cline{2-3}\cline{5-6}\cline{8-9}\noalign{\smallskip}
				& $K/K^\tT$ & CPU & & $K/K^\tT$ & CPU & & $K/K^\tT$ & CPU \\
			\noalign{\smallskip}\hline\noalign{\smallskip}
			FB-EM-TV & \textbf{276} & 2452.14 & & \textbf{168} & 4999.71 & & \textbf{175} & 12612.7 \\ \noalign{\smallskip}
			FB-EM-TV-Nes83 & 512 & 222.98 & & 231 & 308.74 & & - & - \\ \noalign{\smallskip}
			CP-E & 962 & \textbf{94.57} & & 696 & \textbf{69.86} & & 658 & \textbf{65.42} \\ \noalign{\smallskip}
			CP-SI & 565 & 1117.12 & & 456 & 1427.71 & & 561 & 7470.18 \\ \noalign{\smallskip}
			PIDSplit+ & 932 & 239.35 & & 698 & 179.77 & & 610 & 158.94 \\ \noalign{\smallskip}
			%PIDSplit+($\gamma^{(r)}$)	& 1002 & 282.96 & & 662 & 187.12 & & 616 & 173.6 \\ \noalign{\smallskip}
		\hline
		\end{tabular}
	\label{tab:PET3}
\end{table}

\subsection{Spectral X-Ray CT} \label{sec:spectralCT}
Conventional X-ray CT is based on recording changes in the X-ray intensity due to attenuation of X-ray beams traversing the scanned object and has been applied 
in clinical practice for decades. However, the transmitted X-rays carry more information than just intensity changes since the attenuation 
of an X-ray depends strongly on its energy \cite{Alvarez1976,Knoll2000}. It is well understood that the transmitted energy spectrum contains valuable information about 
the structure and material composition of the imaged object and can be utilized to better distinguish different types of absorbing material, such as varying tissue types 
or contrast agents. But the detectors employing in traditional CT systems provide an integral measure of absorption over the transmitted energy spectrum 
and thus eliminate spectral information \cite{Cammin2012,Schirra2014}. 
Even so, spectral information can be obtained by using different input spectra \cite{Flohr2006,Zou2008} or using the concept of dual-layer (integrating) detectors \cite{Carmi2005}.
This has been limited the practical usefulness of energy-resolving imaging,
 also referred to as spectral CT, to dual energy systems. Recent advances in detector technology towards 
binned photon-counting detectors have enabled a new generation of detectors that can measure and analyze incident photons individually \cite{Cammin2012} 
providing the availability of more than two spectral measurements. This development has led to a new imaging method named K-edge imaging \cite{Kruger1977} 
that can be used to selectively and quantitatively image contrast agents loaded with K-edge materials \cite{Feuerlein2008,Pan2012}. For a compact overview 
on technical and practical aspects of spectral CT we refer to \cite{Cammin2012,Schirra2014}.

%In the recent past, statistical image reconstruction methods have received increased attention in X-ray CT due to the improved image quality compared to the classical filtered backprojection algorithm and the increasing noise level encountered in more advanced CT applications such as sparse/limited-view CT or low-dose imaging \cite{Vandeghinste2011,Wang2008,Wang2006}. In particular, first-order splitting algorithms grow in popularity due to the ability to handle non-standard noise models and non-smooth a-priori information efficiently, see, e.g., the proximal forward-backward splitting \cite{Ramani2012,Xu2014}, split-Bregman algorithm \cite{Gao2011,Vandeghinste2011}, ADMM method \cite{Chartrand2013,Ramani2012}, Chambolle-Pock algorithm \cite{Chartrand2013,Sidky2012,Sidky2013}, or the proximal ADMM \cite{Sawatzky2014}.

Two strategies have been proposed to reconstruct material specific images from spectral CT projection data and we refer to \cite{Schirra2014} for a compact overview. 
Either of them is a projection-based material decomposition with a subsequent image reconstruction. This means that in the first step, estimates of material-decomposed 
sinograms are computed from the energy-resolved measurements, and in the second step, material images are reconstructed from the decomposed material sinograms. 
A possible decomposition method to estimate the material sinograms $f_l$, $l= 1,\ldots,L$, from the acquired data is a maximum-likelihood estimator assuming 
a Poisson noise distribution \cite{Roessl2007}, where $L$ is the number of materials considered. An accepted noise model for line integrals $f_l$ 
is a multivariate Gaussian distribution \cite{Schirra2014,Schirra2013} leading to a penalized weighted least squares (PWLS) estimator to reconstruct material images $u_l$:
\begin{equation} \label{eq:PWLSSpectralCT}
	\frac12 \| f - (I_L \otimes K) u \|^2_{\Sigma^{-1}} + \alpha R(u) \rightarrow \min_u , \quad \alpha > 0,
\end{equation}
where $f = (f_1^\tT, \ldots, f_L^\tT)^\tT$, $u = (u_1^\tT, \ldots, u_L^\tT)^\tT$, $I_L$ denotes the $L \times L$ identity matrix, $\otimes$ represents the Kronecker product, 
and $K$ is the forward projection operator. The given block matrix $\Sigma$ is the covariance matrix representing the (multivariate) Gaussian distribution, 
where the off-diagonal block elements describe the inter-sinogram correlations, and can be estimated, e.g., using the inverse of the Fisher information matrix \cite{Roessl2009,Schirra2013}. 
Since $f$ is computed from a common set of measurements, the correlation of the decomposed data is very high and thus a significant improvement can in fact be expected intuitively by exploiting the fully populated covariance matrix $\Sigma$ in \eqref{eq:PWLSSpectralCT}.
In the following, we exemplary show reconstruction results on spectral CT data where \eqref{eq:PWLSSpectralCT} was solved by a proximal ADMM algorithm with a material 
independent total variation penalty function $R$ as discussed in \cite{Sawatzky2014}. For a discussion why ADMM based methods are more preferable for PWLS problems in X-ray CT than, e.g., gradient descent based techniques,
 we refer to \cite{Ramani2012}.

% In the following, we exemplary show spectral CT results based on the proximal ADMM \cite{Sawatzky2014}. In \cite{Sawatzky2014}, the material decomposition was performed by  and the image reconstruction was realized by a total variation penalized weighted least squares estimator exploiting the statistical correlations between the decomposed material sinograms \cite{Zhang2012}. The latter step requires to reconstruct all material images jointly and was realized by the proximal ADMM (see \cite{Sawatzky2014} for more details).

Figures \ref{fig:SpectralCT} and \ref{fig:SpectralCT_Yb} show an example for a statistical image reconstruction method applied to K-edge imaging.
% based on the FORBILD thorax phantom\footnote{http://www.imp.uni-erlangen.de/phantoms/thorax/thorax.htm}
A numerical phantom as shown in Figure \ref{fig:phantomSpectralCT} was employed in a spectral CT simulation study assuming 
a photon-counting detector. Using an analytical spectral attenuation model, spectral measurements were computed. 
The assumed X-ray source spectrum and detector response function of the photon-counting detector were identical to those employed in a prototype spectral CT scanner described in \cite{Schlomka2008}. 
The scan parameters were set to tube voltage 130 kVp, anode current 200 $\mu$A, detector width/height 946.38/1.14 mm, number of columns 1024, source-to-isocenter/-detector distance 570/1040 mm, views per turn 1160, time per turn 1 s, and energy thresholds to 25, 46, 61, 64, 76, and 91 keV. 
The spectral data were then decomposed into 'photo-electric absorption', 'Compton effect', and 'ytterbium' by performing a maximum-likelihood estimation \cite{Roessl2007}. 
By computing the covariance matrix $\Sigma$ of the material decomposed sinograms via the Fisher information matrix \cite{Roessl2009,Schirra2013} and treating the sinograms as the mean and $\Sigma$ as the variance of a Gaussian random vector, 
noisy material sinograms were computed. Figures \ref{fig:SpectralCT} and \ref{fig:SpectralCT_Yb} show material images that were then reconstructed using the traditional filtered backprojection (upper row) 
and proximal ADMM algorithm as described in \cite{Sawatzky2014} (middle and lower row). In the latter case, two strategies were performed: (1) 
keeping only the diagonal block elements of $\Sigma$ in \eqref{eq:PWLSSpectralCT} and thus neglecting cross-correlations and decoupling the reconstruction of material images (middle row); 
(2) using the fully populated covariance matrix $\Sigma$ in \eqref{eq:PWLSSpectralCT} such that all material images have to be reconstructed jointly (lower row). The results suggest, best visible in the K-edge images in Figure \ref{fig:SpectralCT_Yb}, 
that the iterative reconstruction method, which exploits knowledge of the inter-sinogram correlations, produces images that possess a better reconstruction quality. For comparison of iterative reconstruction strategies, the regularization parameters were chosen manually so the reconstructed images possessed approximately the same variance within the region indicated by the dotted circle in Figure \ref{fig:phantomSpectralCT}. Further (preliminary) 
results that demonstrate advantages of exploiting inter-sinogram correlations on computer-simulated and experimental data in spectral CT can be found in \cite{Sawatzky2014,Zhang2012}.

\begin{figure}%
	\centering
	\includegraphics[width=.5\textwidth]{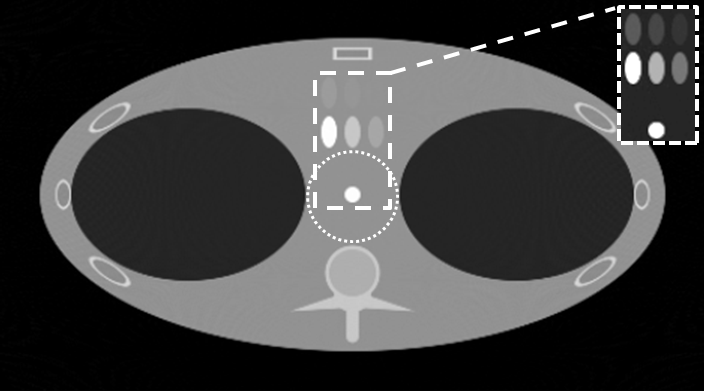}%
	\caption{Software thorax phantom comprising sternum, ribs, lungs, vertebrae, and one circle and six ellipsoids containing different concentrations 
of K-edge material ytterbium \cite{Pan2012}. The phantom was used to simulate spectral CT measurements with a six-bin photon-counting detector.}%
	\label{fig:phantomSpectralCT}%
\end{figure}

\begin{figure}%
	\centering
	%\begin{subfigure}[testv]{\textwidth}{
		%\includegraphics[width=.32\textwidth]{img/SpectralCT/sino0000_c_04xSigma_pro.pdf}%
		%\hfil
		%\includegraphics[width=.32\textwidth]{img/SpectralCT/sino0000_p_04xSigma_pro.pdf}%
		%\hfil
		%\includegraphics[width=.32\textwidth]{img/SpectralCT/sino0000_Yb_04xSigma_pro.pdf}%
		%\label{fig:SpectralCTSino}
	%\end{subfigure}
		\includegraphics[width=.49\textwidth]{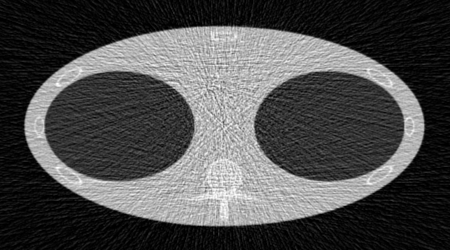}%
		\hfil
		\includegraphics[width=.49\textwidth]{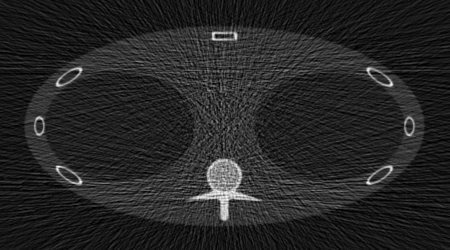}%
		\\[.1em]
		\includegraphics[width=.49\textwidth]{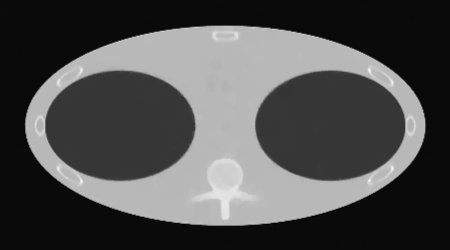}%
		\hfil
		\includegraphics[width=.49\textwidth]{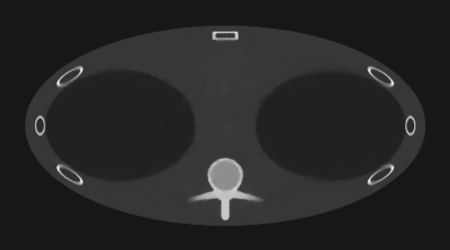}%
		\\[.1em]
		\includegraphics[width=.49\textwidth]{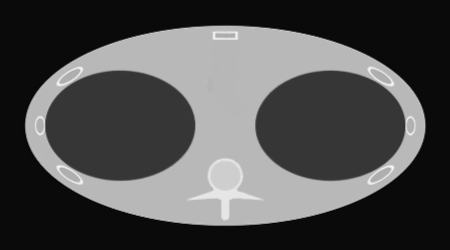}%
		\hfil
		\includegraphics[width=.49\textwidth]{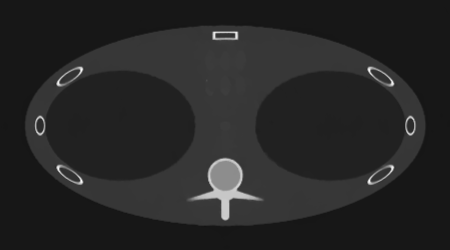}%
	\caption{Reconstructions based on the thorax phantom (see Figure \ref{fig:phantomSpectralCT}) using the traditional filtered backprojection with Shepp-Logan filter (upper row) and a proximal 
ADMM algorithm as described in \cite{Sawatzky2014} (middle and lower row). The middle row shows results based on \eqref{eq:PWLSSpectralCT} neglecting cross-correlations between the material 
decomposed sinograms and lower row using the fully populated covariance matrix $\Sigma$.
The material images show the results for the 'Compton effect' (left column) and 'photo-electric absorption' (right column). The K-edge material 'ytterbium' is shown in Figure \ref{fig:SpectralCT_Yb}.}%
	\label{fig:SpectralCT}%
\end{figure}
% The material images show top-down the reconstruction results for the 'Compton effect', 'photo-electric absorption', and K-edge material 'ytterbium', respectively.

\begin{figure}%
	\centering
		\includegraphics[width=.49\textwidth]{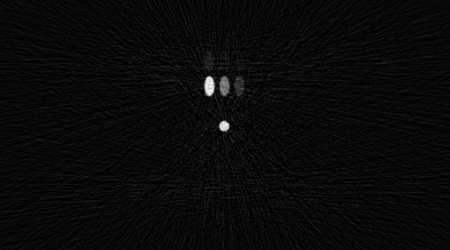}%
		\hfil
		\includegraphics[width=.49\textwidth]{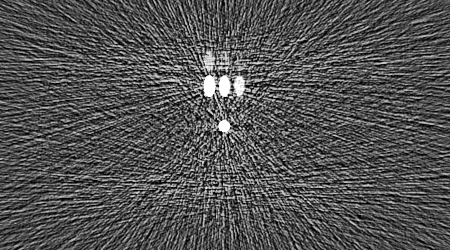}%
		\\[.1em]
		\includegraphics[width=.49\textwidth]{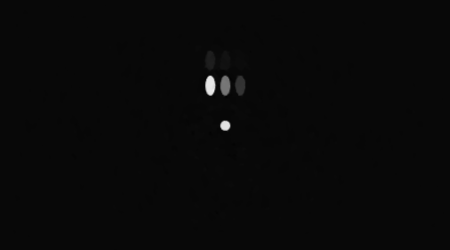}%
		\hfil
		\includegraphics[width=.49\textwidth]{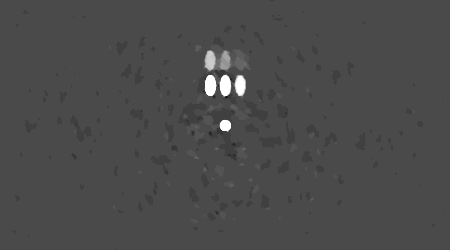}%
		\\[.1em]
		\includegraphics[width=.49\textwidth]{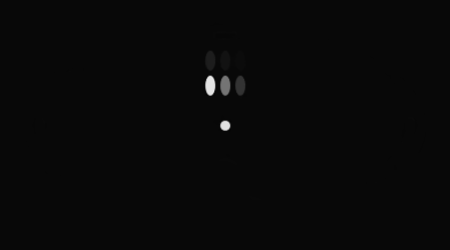}%
		\hfil
		\includegraphics[width=.49\textwidth]{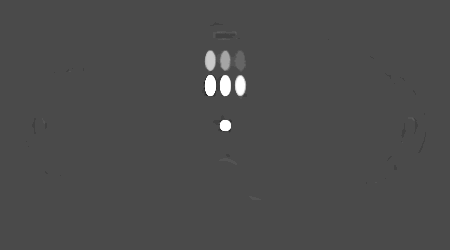}%
	\caption{Reconstructions of the K-edge material 'ytterbium' using the thorax phantom shown in Figure \ref{fig:phantomSpectralCT}. For details see Figure \ref{fig:SpectralCT}. 
To recognize the differences, the maximal intensity value of original reconstructed images shown in left column was set down in the right column.}%
	\label{fig:SpectralCT_Yb}%
\end{figure}

%----------------------------------------------------------------------------------------------
%\section{Open Questions and Future Directions}
%---------------------------------------------------------------------------------------

\paragraph{Acknowledgements} The authors thank Frank W\"ubbeling (University of M\"unster, Germany) 
for providing the Monte-Carlo simulation for the synthetic 2D PET data.
The authors also thank Thomas Koehler (Philips Technologie GmbH, Innovative Technologies, Hamburg, Germany) for comments that improved the manuscript. The work on spectral CT was performed 
when A. Sawatzky was with the Computational Bioimaging Laboratory at Washington University in St. Louis, USA, 
and was supported in part by NIH award EB009715 and funded from Philips Research North America.

\bibliographystyle{abbrv}
\bibliography{IEEEabrv,ref_algs_final}

\end{document}